\let\oldmarginpar\marginpar
\renewcommand\marginpar[1]{\-\oldmarginpar[\raggedleft\footnotesize #1]%
{\raggedright\footnotesize #1}}
\begin{document}

\newtheorem{theorem}{Theorem}[section]
\newtheorem{corollary}[theorem]{Corollary}
\newtheorem{lemma}[theorem]{Lemma}
\newtheorem{proposition}[theorem]{Proposition}
\theoremstyle{definition}
\newtheorem{definition}[theorem]{Definition}
\theoremstyle{remark}
\newtheorem{remark}[theorem]{Remark}
\theoremstyle{definition}
\newtheorem{example}[theorem]{Example}

\def\rank{{\text{rank}\,}}

\numberwithin{equation}{section}

\title[Almost h-conformal semi-invariant submersions]{Almost h-conformal semi-invariant submersions from almost quaternionic Hermitian manifolds}

\author{Kwang-Soon Park}
\address{Division of General Mathematics, Room 4-107, Changgong Hall, University of Seoul, Seoul 02504, Republic of Korea}
\email{parkksn@gmail.com}
%\thanks{The research of the first author was supported by the National Research Foundation of Korea (NRF)
%grant funded by the Korea government (MEST)(No. 2009-0057445).}

\keywords{horizontally conformal submersion, quaternionic manifold,
totally geodesic}

\subjclass[2000]{53C15, 53C26, 53C43.}   %Primary 57M50; Secondary 57N16, 53A20, 53C15}

\begin{abstract}
As a generalization of Riemannian submersions, horizontally
conformal submersions, semi-invariant submersions, h-semi-invariant
submersions, almost h-semi-invariant submersions, conformal
semi-invariant submersions, we introduce h-conformal semi-invariant
submersions and almost h-conformal semi-invariant submersions from
almost quaternionic Hermitian manifolds onto Riemannian manifolds.

We study their properties: the geometry of foliations, the
conditions for total manifolds to be locally product manifolds, the
conditions for such maps to be totally geodesic, etc. Finally, we
give some examples of such maps.
\end{abstract}

\maketitle
\section{Introduction}\label{intro}
\addcontentsline{toc}{section}{Introduction}

As we know, Riemannian submersions were independently introduced by
B. O'Neill \cite{O} and A. Gray \cite{G} in 1960s. Using the notion
of almost Hermitian submersions, B. Watson \cite{W} obtained some
differential geometric properties among fibers, base manifolds, and
total manifolds. After that, many geometers study this area and
there are a lot of results on this topic.

As a generalization of  Riemannian submersions, a horizontally
conformal submersion was introduced independently by B. Fuglede
\cite{F} and T. Ishihara \cite{I} in 1970s and it is a particular
type of conformal maps.

Given a $C^{\infty}$-submersion $F$ from a Riemannian manifold
$(M,g_M)$ onto a Riemannian manifold $(N,g_N)$, according to the
conditions on the map $F : (M,g_M) \mapsto (N,g_N)$, we have the
following types of submersions:

a Riemannian submersion (\cite{G}, \cite{O}, \cite{FIP}), an almost
Hermitian submersion \cite{W}, an invariant submersion \cite{S3}, an
anti-invariant submersion \cite{S0},  a slant submersion (\cite{C},
\cite{S}), a semi-invariant submersion \cite{S2}, a semi-slant
submersion \cite{PP}, a quaternionic submersion \cite{IMV}, an
h-anti-invariant submersion and an almost h-anti-invariant
submersion \cite{P3}, an h-semi-invariant submersion and an almost
h-semi-invariant submersion \cite{P2}, a horizontally conformal
submersion (\cite{G2}, \cite{BW}), a conformal anti-invariant
submersion \cite{AS}, a conformal semi-invariant submersion
\cite{AS2}, etc.

It is well-known that Riemannian submersions are related with
physics and have their applications in the Yang-Mills theory
(\cite{BL2}, \cite{W2}), Kaluza-Klein theory (\cite{BL}, \cite{IV}),
Supergravity and superstring theories (\cite{IV2}, \cite{M}), etc.
And the quaternionic K\"{a}hler manifolds have applications in
physics as the target spaces for nonlinear $\sigma-$models with
supersymmetry \cite{CMMS}.

The paper is organized as follows. In section 2 we remind some
notions, which are needed in the following sections. In section 3 we
give the definitions of h-conformal semi-invariant submersions and
almost h-conformal semi-invariant submersions and obtain some
properties on them: the characterizations of such maps, the
harmonicity of such maps, the conditions for such maps to be totally
geodesic,
 the integrability of distributions, the geometry of foliations, etc.
 In section 4 we give some examples of
h-conformal semi-invariant submersions and almost h-conformal
semi-invariant submersions.

\section{Preliminaries}\label{prelim}

In this section we remind some notions, which will be used in the
following sections.

Let $(M,g_M)$ and $(N,g_N)$ be Riemannian manifolds, where $g_M$ and
$g_N$ are Riemannian metrics on $C^{\infty}$-manifolds $M$ and $N$,
respectively.

Let $F : (M,g_M) \mapsto (N,g_N)$ be a $C^{\infty}$-map.

We call the map $F$ a {\em $C^{\infty}$-submersion} if $F$ is
surjective and the differential $(F_*)_p$  has maximal rank for any
$p\in M$.

Then the map $F$ is said to be a {\em Riemannian submersion}
(\cite{O}, \cite{FIP}) if $F$ is a $C^{\infty}$-submersion and
$$
(F_*)_p : ((\ker (F_*)_p)^{\perp}, (g_M)_p) \mapsto (T_{F(p)} N,
(g_N)_{F(p)})
$$
is a linear isometry for any $p\in M$, where $(\ker
(F_*)_p)^{\perp}$ is the orthogonal complement of the space $\ker
(F_*)_p$ in the tangent space $T_p M$ to $M$ at $p$.

The map $F$ is called {\em horizontally weakly conformal} at $p\in
M$ if it satisfies either (i) $(F_*)_p = 0$ or (ii) $(F_*)_p$ is
surjective and there exists a positive number $\lambda (p) > 0$ such
that
\begin{equation}\label{eq1}
g_N ((F_*)_p X, (F_*)_p Y) = \lambda^2 g_M (X, Y) \quad \text{for} \
X,Y\in (\ker (F_*)_p)^{\perp}.
\end{equation}
We call the point $p$ a {\em critical point} if it satisfies the
type (i) and call the point $p$ a {\em regular point} if it
satisfies the type (ii). And the positive number $\lambda (p)$ is
said to be {\em dilation} of $F$ at $p$. The map $F$ is called {\em
horizontally weakly conformal} if it is horizontally weakly
conformal at any point of $M$. If the map $F$ is horizontally weakly
conformal and it has no critical points, then we call the map $F$ a
{\em horizontally conformal submersion}.

Let $F : (M,g_M) \mapsto (N,g_N)$ be a horizontally conformal
submersion.

Given any vector field $U\in \Gamma(TM)$, we write
\begin{equation}\label{eq2}
U = \mathcal{V}U+\mathcal{H}U,
\end{equation}
where $\mathcal{V}U\in \Gamma(\ker F_*)$ and $\mathcal{H}U\in
\Gamma((\ker F_*)^{\perp})$.

Define the (O'Neill) tensors $\mathcal{T}$ and $\mathcal{A}$ by
\begin{eqnarray}
  \mathcal{A}_E F &=& \mathcal{H}\nabla_{\mathcal{H}E} \mathcal{V}F+\mathcal{V}\nabla_{\mathcal{H}E} \mathcal{H}F  \label{eq3} \\
   \mathcal{T}_E F &=& \mathcal{H}\nabla_{\mathcal{V}E} \mathcal{V}F+\mathcal{V}\nabla_{\mathcal{V}E}
   \mathcal{H}F  \label{eq4}
\end{eqnarray}
for vector fields $E, F\in \Gamma(TM)$, where $\nabla$ is the
Levi-Civita connection of $g_M$ (\cite{O}, \cite{FIP}). Then it is
well-known that
\begin{equation}\label{eqn22}
g_M (\mathcal{T}_U V, W) = -g_M (V, \mathcal{T}_U W)
\end{equation}
\begin{equation}\label{eqn23}
g_M (\mathcal{A}_U V, W) = -g_M (V, \mathcal{A}_U W)
\end{equation}
for $U,V,W\in \Gamma(TM)$.

Define $\widehat{\nabla}_X Y := \mathcal{V}\nabla_X Y$ for $X,Y\in
\Gamma(\ker F_*)$.

Let $F : (M, g_M) \mapsto (N, g_N)$ be a $C^{\infty}$-map.

Then the {\em second fundamental form} of $F$ is given by
$$
(\nabla F_*)(X,Y) := \nabla^F _X F_* Y-F_* (\nabla _XY) \quad
\text{for} \ X,Y\in \Gamma(TM),
$$
where $\nabla^F$ is the pullback connection and we denote
conveniently by $\nabla$ the Levi-Civita connections of the metrics
$g_M$ and $g_N$ \cite{BW}.

Recall that $F$ is said to be {\em harmonic} if the tension field
$\tau(F) = trace (\nabla F_*)=0$ and $F$ is called a {\em totally
geodesic} map if $(\nabla F_*)(X,Y)=0$ for $X,Y\in \Gamma (TM)$
\cite{BW}.

\begin{lemma}\label{lem1} \cite{U}
Let $(M, g_M)$ and $(N, g_N)$ be Riemannian manifolds and $F :
(M,g_M)\mapsto (N,g_N)$ a $C^{\infty}$-map. Then we have
\begin{equation}\label{eq5}
\nabla_X^F F_*Y - \nabla_Y^F F_*X - F_*([X,Y]) = 0
\end{equation}
for $X,Y\in \Gamma(TM)$.
\end{lemma}

\begin{remark}
(1) By (\ref{eq5}), we see that the second fundamental form $\nabla
F_*$ is symmetric.

(2) By (\ref{eq5}), we obtain
\begin{equation}\label{eqn5}
[V,X]\in \Gamma(\ker F_*)
\end{equation}
for $V\in \Gamma(\ker F_*)$ and $X\in \Gamma((\ker F_*)^{\perp})$.
\end{remark}

Let $F : (M,g_M)\mapsto (N,g_N)$ be a horizontally conformal
submersion with dilation $\lambda$.

We call a vector field $X\in \Gamma(TM)$ {\em basic} if (i) $X\in
\Gamma((\ker F_*)^{\perp})$ and (ii) $X$ is $F$-related with some
vector field $\overline{X}\in \Gamma(TN)$. (i.e., $(F_*)_p X(p) =
\overline{X}(F(p))$ for any $p\in M$.)

Given any fiber $F^{-1}(y)$, $y\in N$, and any basic vector fields
$X,Y\in \Gamma((\ker F_*)^{\perp})$, we have
$$
\lambda(x)^2 g_M (X, Y)(x) = g_N (F_*X, F_*Y)(y) = constant
$$
for any $x\in F^{-1}(y)$ so that
\begin{equation}\label{eq6}
V(\lambda^2 g_M (X, Y)) =  V(g_N (F_*X, F_*Y)) = 0 \quad \text{for}
\ V\in \Gamma(\ker F_*).
\end{equation}
Then we get

\begin{proposition}\label{prop1}\cite{G2}
Let  $F : (M, g_M) \mapsto (N, g_N)$ be a horizontally conformal
submersion with dilation $\lambda$. Then we obtain
\begin{equation}\label{eq7}
\mathcal{A}_X Y = \frac{1}{2}\{\mathcal{V}[X,Y] - \lambda^2 g_M (X,
Y) \nabla_{\mathcal{V}} (\frac{1}{\lambda^2})\}
\end{equation}
for $X,Y\in \Gamma((\ker F_*)^{\perp})$.
\end{proposition}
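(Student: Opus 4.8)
The plan is to compute $\mathcal{A}_X Y$ directly from its definition \eqref{eq3} and exploit the symmetry/antisymmetry properties of the O'Neill tensor together with the conformality relation \eqref{eq6}. First I would decompose $\mathcal{A}_X Y = \mathcal{V}\nabla_X Y$ for horizontal $X,Y$, and note that by antisymmetry of $\mathcal{A}$ in the sense of \eqref{eqn23} one has $g_M(\mathcal{A}_X Y, V) = -g_M(Y, \mathcal{A}_X V)$ for any vertical $V$; but the more useful route is the classical one: write $2\mathcal{A}_X Y = \mathcal{A}_X Y + \mathcal{A}_X Y$ and use $\mathcal{A}_X Y - \mathcal{A}_Y X = \mathcal{V}[X,Y]$ (which follows since $\mathcal{H}\nabla_X Y - \mathcal{H}\nabla_Y X$ is the horizontal part of $[X,Y]$ and the torsion-free property of $\nabla$), so that $\mathcal{A}_X Y = \tfrac12 \mathcal{V}[X,Y] + \tfrac12(\mathcal{A}_X Y + \mathcal{A}_Y X)$. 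Thus everything reduces to identifying the symmetric part $\mathcal{A}_X Y + \mathcal{A}_Y X$.

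To pin down the symmetric part, I would test it against an arbitrary vertical vector field $V \in \Gamma(\ker F_*)$ and show $g_M(\mathcal{A}_X Y + \mathcal{A}_Y X, V) = -\lambda^2 g_M(X,Y)\, g_M(\nabla_{\mathcal V}(\tfrac{1}{\lambda^2}), V)$. The key computational input is \eqref{eq6}: differentiating $\lambda^2 g_M(X,Y)$ along $V$ (for basic $X,Y$) gives zero, i.e. $V(\lambda^2) g_M(X,Y) + \lambda^2 V(g_M(X,Y)) = 0$. Expanding $V(g_M(X,Y)) = g_M(\nabla_V X, Y) + g_M(X, \nabla_V Y)$ and using that $[V,X], [V,Y] \in \Gamma(\ker F_*)$ by \eqref{eqn5} (so $\mathcal{H}\nabla_V X = \mathcal{H}\nabla_X V = \mathcal{A}_X V$, and similarly with $X,Y$ swapped), one gets $g_M(\nabla_V X, Y) = g_M(\mathcal{A}_X V, Y) = -g_M(\mathcal{A}_X Y, V)$ by \eqref{eqn23}, and likewise $g_M(X, \nabla_V Y) = -g_M(\mathcal{A}_Y X, V)$. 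Substituting back yields $g_M(\mathcal{A}_X Y + \mathcal{A}_Y X, V) = \tfrac{V(\lambda^2)}{\lambda^2} g_M(X,Y)$; rewriting $\tfrac{V(\lambda^2)}{\lambda^2} = -\lambda^2 V(\tfrac{1}{\lambda^2}) = -\lambda^2 g_M(\nabla_{\mathcal V}(\tfrac{1}{\lambda^2}), V)$ identifies the symmetric part and gives \eqref{eq7} after combining with the antisymmetric part above. Since the identity is tensorial in $X,Y$ and both sides are $C^\infty(M)$-linear, proving it for basic vector fields suffices (every horizontal vector at a point extends to a basic field).

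The main obstacle — really the only subtlety — is the reduction to basic vector fields and making sure the $C^\infty(M)$-linearity check goes through: $\mathcal{A}_X Y$ is tensorial in both slots, $\mathcal{V}[X,Y]$ is not obviously tensorial as written but the correction terms conspire, and the right-hand side $\lambda^2 g_M(X,Y)\nabla_{\mathcal V}(\tfrac{1}{\lambda^2})$ is manifestly tensorial in $X,Y$; so one should verify that $\mathcal{V}[X,Y] - \lambda^2 g_M(X,Y)\nabla_{\mathcal V}(\tfrac1{\lambda^2})$ together is tensorial, which is automatic once the identity is established on a spanning set of basic fields. A secondary point to handle carefully is that \eqref{eq6} is stated for basic $X,Y$ on a single fiber, so the argument must be run fiberwise (equivalently, for $V$ tangent to the fiber through the point of evaluation), but since the final identity is pointwise this causes no trouble.
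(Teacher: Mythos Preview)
The paper does not actually supply a proof of this proposition: it is quoted verbatim with the citation \cite{G2} (Gudmundsson's thesis) and used as a standing fact, so there is no ``paper's own proof'' to compare against. Your argument is the standard one and is correct: split $\mathcal{A}_X Y$ into its antisymmetric part $\tfrac12\mathcal{V}[X,Y]$ and its symmetric part, then identify the symmetric part by differentiating the conformality relation \eqref{eq6} along a vertical $V$ and using the skew-symmetry \eqref{eqn23} together with $\mathcal{H}\nabla_V X=\mathcal{A}_X V$ for basic $X$. Your handling of the sign bookkeeping and the rewriting $\tfrac{V(\lambda^2)}{\lambda^2}=-\lambda^2\,V(\tfrac{1}{\lambda^2})$ is accurate, and the closing remark about tensoriality and the reduction to basic vector fields is exactly the right way to complete the argument pointwise.
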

Here, $\nabla_{\mathcal{V}}$ denotes the gradient vector field in
the distribution $\ker F_* \subset TM$. (i.e.,
$\displaystyle{\nabla_{\mathcal{V}} f = \sum_{i=1}^{m} V_i(f) V_i}$
for $f\in C^{\infty}(M)$ and a local orthonormal frame $\{ V_1,
\cdots, V_m \}$ of $\ker F_*$.)

\begin{lemma}\label{lem1}\cite{BW}
Let  $F : (M, g_M) \mapsto (N, g_N)$ be a horizontally conformal
submersion with dilation $\lambda$. Then we have
\begin{eqnarray}
 (\nabla F_*)(X,Y) &=& X(\ln \lambda) F_*Y + Y(\ln \lambda) F_*X - g_M (X, Y) F_* (\nabla \ln \lambda),   \label{eqn77} \\
 (\nabla F_*)(V,W) &=& -F_*(\mathcal{T}_V W),  \label{eqn78} \\
 (\nabla F_*)(X,V) &=& -F_*(\nabla_X V) = -F_*(\mathcal{A}_X V)  \label{eqn79}
\end{eqnarray}
for $X,Y\in \Gamma((\ker F_*)^{\perp})$ and $V,W\in \Gamma(\ker
F_*)$.
\end{lemma}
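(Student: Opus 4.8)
The plan is to prove the three formulas by starting from the definition of the second fundamental form, $(\nabla F_*)(E,E') = \nabla^F_E F_*E' - F_*(\nabla_E E')$, and specializing the arguments to the three cases (horizontal-horizontal, vertical-vertical, horizontal-vertical), using in each case the defining property \eqref{eq1} of a horizontally conformal submersion together with the O'Neill tensors \eqref{eq3}--\eqref{eq4}. For \eqref{eqn79} the argument is shortest: since $V\in\Gamma(\ker F_*)$ we have $F_*V=0$ identically, so $\nabla^F_X F_*V = 0$ and hence $(\nabla F_*)(X,V) = -F_*(\nabla_X V)$; then I would decompose $\nabla_X V = \mathcal{H}\nabla_X V + \mathcal{V}\nabla_X V$, observe that $F_*$ kills the vertical part, and identify $\mathcal{H}\nabla_X V$ with $\mathcal{A}_X V$ via \eqref{eq3} (noting $\mathcal{V}X = 0$, so $\mathcal{A}_X V = \mathcal{H}\nabla_X \mathcal{V}V + \mathcal{V}\nabla_X \mathcal{H}V = \mathcal{H}\nabla_X V$). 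Similarly, for \eqref{eqn78}, with $V,W$ vertical we get $F_*W = 0$ so $(\nabla F_*)(V,W) = -F_*(\nabla_V W)$, and $\nabla_V W = \mathcal{T}_V W + \widehat{\nabla}_V W$ with the second term vertical and hence annihilated by $F_*$, giving $-F_*(\mathcal{T}_V W)$.

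The genuinely substantive computation is \eqref{eqn77}, the horizontal-horizontal case, which is where the conformal factor $\lambda$ enters. Here I would take $X,Y$ to be basic horizontal vector fields (it suffices to verify the tensorial identity on such fields since both sides are tensorial and basic fields span the horizontal distribution pointwise), so that $F_*X = \overline{X}\circ F$ and $F_*Y = \overline{Y}\circ F$ for vector fields $\overline X,\overline Y$ on $N$. Then $\nabla^F_X F_*Y = \nabla^N_{\overline X}\overline Y\circ F$, and I must compute $F_*(\nabla_X Y)$. The key device is to differentiate the defining relation $g_N(F_*X,F_*Y) = \lambda^2 g_M(X,Y)$ along a third horizontal basic field $Z$ and use the Koszul formula on $N$ for $2g_N(\nabla^N_{\overline X}\overline Y,\overline Z)$, translating each term back to $M$ via the conformality relation and the $F$-relatedness of the brackets $[X,Y]$, $[Y,Z]$, $[Z,X]$ (which are again $F$-related, up to vertical corrections, to $[\overline X,\overline Y]$ etc.). Comparing with the Koszul formula on $M$ for $2g_M(\nabla_X Y,Z)$ will produce exactly the extra terms $X(\ln\lambda)g_M(Y,Z) + Y(\ln\lambda)g_M(X,Z) - Z(\ln\lambda)g_M(X,Y)$, which after applying $F_*$ and using $g_N(F_*X,F_*Y)=\lambda^2 g_M(X,Y)$ again yield the right-hand side of \eqref{eqn77}. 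One must also check that contracting against a vertical direction contributes nothing, i.e. $g_N(F_*(\nabla_X Y), F_*V) = 0$ trivially since $F_*V=0$, so it is enough to determine the horizontal component of $\nabla_X Y$, and its image under $F_*$.

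The main obstacle, and the step I would be most careful about, is the bookkeeping in the Koszul-formula comparison for \eqref{eqn77}: one needs that the Lie bracket of two basic vector fields is $F$-related to the bracket of the projected fields, and one needs to correctly separate horizontal and vertical components throughout, since $\nabla_X Y$ is generally not horizontal (its vertical part is $\mathcal{A}_X Y$, controlled by Proposition \ref{prop1}). Because $F_*$ annihilates vertical vectors, that vertical part is invisible on the left-hand side, which is what makes the final formula clean; but to reach it one must argue that only the horizontal part survives and compute that part via the conformal Koszul identity. After that, everything reduces to the identity $F_*(\nabla\ln\lambda)$ paired appropriately against $F_*Y$, and the routine substitutions close the argument. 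A cleaner alternative I would consider is to invoke the known general formula for the second fundamental form of a horizontally conformal map directly from \cite{BW}, but since the statement cites \cite{BW}, presumably the intended proof is exactly this Koszul-formula computation adapted to the conformal setting, so I would present that.
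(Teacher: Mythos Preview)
Your proof sketch is correct, but note that the paper itself provides no proof for this lemma: it is simply quoted from the reference \cite{BW} (Baird--Wood), as indicated by the citation attached to the lemma statement. So there is nothing in the paper to compare your argument against; what you have written is essentially the standard derivation one finds in that reference, and your handling of all three cases---the immediate reductions for \eqref{eqn78} and \eqref{eqn79} via $F_*V=0$ together with the identification $\mathcal{H}\nabla_X V=\mathcal{A}_X V$, and the Koszul-formula comparison for \eqref{eqn77}---is sound. Your closing remark is therefore exactly on point: the ``intended proof'' here is simply the citation.
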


We recall some notions, which are related with our notions.

Let $(M, g_M, J)$ be an almost Hermitian manifold, where $J$ is an
almost complex structure on $M$. (i.e., $J^2 = -id$, $g_M (JX, JY) =
g_M (X, Y)$ for $X,Y\in \Gamma(TM)$.)

We call a horizontally conformal submersion $F : (M,g_M,J)\mapsto
(N,g_N)$ a {\em conformal anti-invariant submersion} \cite{AS} if
$J(\ker F_*) \subset (\ker F_*)^{\perp}$.

A horizontally conformal submersion $F : (M,g_M,J)\mapsto (N,g_N)$
is called a {\em conformal semi-invariant submersion} \cite{AS2} if
there is a distribution $\mathcal{D}_1\subset \ker F_*$ such that
$$
\ker F_* =\mathcal{D}_1\oplus \mathcal{D}_2, \
J(\mathcal{D}_1)=\mathcal{D}_1, \ J(\mathcal{D}_2) \subset (\ker
F_*)^{\perp},
$$
where $\mathcal{D}_2$ is the orthogonal complement of
$\mathcal{D}_1$ in $\ker F_*$

Let $M$ be a $4m-$dimensional $C^{\infty}$-manifold and let $E$ be a
rank 3 subbundle of $\text{End} (TM)$ such that for any point $p\in
M$ with a neighborhood $U$, there exists a local basis $\{
J_1,J_2,J_3 \}$ of sections of $E$ on $U$ satisfying for all
$\alpha\in \{ 1,2,3 \}$
$$
J_{\alpha}^2=-id, \quad
J_{\alpha}J_{\alpha+1}=-J_{\alpha+1}J_{\alpha}=J_{\alpha+2},
$$
where the indices are taken from $\{ 1,2,3 \}$ modulo 3.

Then we call $E$ an {\em almost quaternionic structure} on $M$ and
$(M,E)$ an {\em almost quaternionic manifold} \cite{AM}.

Moreover, let $g$ be a Riemannian metric on $M$ such that for any
point $p\in M$ with a neighborhood $U$, there exists a local basis
$\{ J_1,J_2,J_3 \}$ of sections of $E$ on $U$ satisfying for all
$\alpha\in \{ 1,2,3 \}$
\begin{equation}\label{hypercom}
J_{\alpha}^2=-id, \quad
J_{\alpha}J_{\alpha+1}=-J_{\alpha+1}J_{\alpha}=J_{\alpha+2},
\end{equation}
\begin{equation}\label{hypermet}
g(J_{\alpha}X, J_{\alpha}Y)=g(X, Y)
\end{equation}
for all vector fields  $X, Y\in \Gamma(TM)$, where the indices are
taken from $\{ 1,2,3 \}$ modulo 3.

Then we call $(M,E,g)$ an {\em almost quaternionic Hermitian
manifold} \cite{IMV}.

For convenience, the above basis $\{ J_1,J_2,J_3 \}$ satisfying
(\ref{hypercom}) and (\ref{hypermet}) is said to be a {\em
quaternionic Hermitian basis}.

Let $(M,E,g)$ be an almost quaternionic Hermitian manifold.

We call $(M,E,g)$ a {\em quaternionic K\"{a}hler manifold} if there
exist locally defined 1-forms $\omega_1, \omega_2, \omega_3$ such
that for $\alpha \in \{ 1,2,3 \}$
$$
\nabla_X J_{\alpha} =
\omega_{\alpha+2}(X)J_{\alpha+1}-\omega_{\alpha+1}(X)J_{\alpha+2}
$$
for any vector field $X\in \Gamma(TM)$, where the indices are taken
from $\{ 1,2,3 \}$ modulo 3 \cite{IMV}.

If there exists a global parallel quaternionic Hermitian basis $\{
J_1,J_2,J_3 \}$ of sections of $E$ on $M$ (i.e., $\nabla J_{\alpha}
= 0$ for $\alpha \in \{ 1,2,3 \}$, where $\nabla$ is the Levi-Civita
connection of the metric $g$), then $(M, E, g )$ is said to be a
{\em hyperk\"{a}hler manifold}. Furthermore, we call $(J_1, J_2,
J_3, g )$ a {\em hyperk\"{a}hler structure} on $M$ and $g$ a {\em
hyperk\"{a}hler metric} \cite{B}.

Let $(M, E_M, g_M)$ and $(N, E_N, g_N)$ be almost quaternionic
Hermitian manifolds.

A map $F : M \mapsto N$ is called a {\em $(E_M,E_N)-$holomorphic
map} if given a point $x\in M$, for any $J\in (E_M)_x$ there exists
$J'\in (E_N)_{F(x)}$ such that
$$
F_*\circ J=J'\circ F_*.
$$
A Riemannian submersion $F : M \mapsto N$ which is a
$(E_M,E_N)-$holomorphic map is called a {\em quaternionic
submersion} \cite{IMV}.

Moreover, if $(M, E_M, g_M)$ is a quaternionic K\"{a}hler manifold
(or a hyperk\"{a}hler manifold), then we say that $F$ is a {\em
quaternionic K\"{a}hler submersion} (or a {\em hyperk\"{a}hler
submersion}) \cite{IMV}.

Then it is well-known that any quaternionic K\"{a}hler submersion is
a harmonic map \cite{IMV}.

Let $(M, E, g_M)$ be an almost quaternionic Hermitian manifold and
$(N, g_N)$ a Riemannian manifold.

A Riemannian submersion $F : (M,E,g_M) \mapsto (N,g_N)$ is called an
{\em h-semi-invariant
 submersion} if given a point $p\in M$ with a
neighborhood $U$, there exists a  quaternionic Hermitian basis $\{
I,J,K \}$ of sections of $E$ on $U$ such that for any $R\in \{ I,J,K
\}$, there is a distribution $\mathcal{D}_1 \subset \ker F_*$ on $U$
such that
$$
\ker F_* =\mathcal{D}_1\oplus \mathcal{D}_2, \
R(\mathcal{D}_1)=\mathcal{D}_1, \ R(\mathcal{D}_2)\subset (\ker
F_*)^{\perp},
$$
where $\mathcal{D}_2$ is the orthogonal complement of
$\mathcal{D}_1$ in $\ker F_*$ \cite{P2}.

We call such a basis $\{ I,J,K \}$ an {\em h-semi-invariant basis}.

A Riemannian submersion $F : (M,E,g_M) \mapsto (N,g_N)$ is called an
{\em almost h-semi-invariant submersion} if given a point $p\in M$
with a neighborhood $U$, there exists a  quaternionic Hermitian
basis $\{ I,J,K \}$ of sections of $E$ on $U$ such that for each
$R\in \{ I,J,K \}$, there is a distribution $\mathcal{D}_1^R \subset
\ker F_*$ on $U$ such that
$$
\ker F_* =\mathcal{D}_1^R\oplus \mathcal{D}_2^R, \
R(\mathcal{D}_1^R)=\mathcal{D}_1^R, \ R(\mathcal{D}_2^R)\subset
(\ker F_*)^{\perp},
$$
where $\mathcal{D}_2^R$ is the orthogonal complement of
$\mathcal{D}_1^R$ in $\ker F_*$ \cite{P2}.

We call such a basis $\{ I,J,K \}$ an {\em almost h-semi-invariant
basis}.

Throughout this paper, we will use the above notations.

\section{Almost h-conformal semi-invariant submersions}\label{semi}

In this section, we define h-conformal semi-invariant submersions
and almost h-conformal semi-invariant submersions from almost
 quaternionic Hermitian manifolds onto Riemannian manifolds.
 And we study their properties: the integrability of distributions,
 the geometry of foliations, the conditions for such maps to be
 totally geodesic, etc.

\begin{definition}
Let $(M,E,g_M)$ be an almost quaternionic Hermitian manifold and
$(N,g_N)$ a Riemannian manifold. A horizontally conformal submersion
$F : (M,E,g_M) \mapsto (N,g_N)$ is called an {\em h-conformal
semi-invariant submersion} if given a point $p\in M$ with a
neighborhood $U$, there exists a quaternionic Hermitian basis $\{
I,J,K \}$ of sections of $E$ on $U$ such that for any $R\in \{ I,J,K
\}$, there is a distribution $\mathcal{D}_1 \subset \ker F_*$ on $U$
such that
$$
\ker F_* =\mathcal{D}_1\oplus \mathcal{D}_2, \
R(\mathcal{D}_1)=\mathcal{D}_1, \ R(\mathcal{D}_2)\subset (\ker
F_*)^{\perp},
$$
where $\mathcal{D}_2$ is the orthogonal complement of
$\mathcal{D}_1$ in $\ker F_*$.
\end{definition}

We call such a basis $\{ I,J,K \}$ an {\em h-conformal
semi-invariant basis}.

\begin{definition}
Let $(M,E,g_M)$ be an almost quaternionic Hermitian manifold and
$(N,g_N)$ a Riemannian manifold. A horizontally conformal submersion
$F : (M,E,g_M) \mapsto (N,g_N)$ is called an {\em almost h-conformal
semi-invariant submersion} if given a point $p\in M$ with a
neighborhood $U$, there exists a quaternionic Hermitian basis $\{
I,J,K \}$ of sections of $E$ on $U$ such that for each $R\in \{
I,J,K \}$, there is a distribution $\mathcal{D}_1^R \subset \ker
F_*$ on $U$ such that
$$
\ker F_* =\mathcal{D}_1^R\oplus \mathcal{D}_2^R, \
R(\mathcal{D}_1^R)=\mathcal{D}_1^R, \ R(\mathcal{D}_2^R)\subset
(\ker F_*)^{\perp},
$$
where $\mathcal{D}_2^R$ is the orthogonal complement of
$\mathcal{D}_1^R$ in $\ker F_*$.
\end{definition}

We call such a basis $\{ I,J,K \}$ an {\em almost h-conformal
semi-invariant basis}.

\begin{remark}
(1) Let $F$ be an h-conformal semi-invariant submersion from a
hyperk\"{a}hler manifold $(M,I,J,K,g_M)$ onto a Riemannian manifold
$(N,g_N)$ such that $(I,J,K)$ is an h-conformal semi-invariant
basis. Then the fibers of the map $F$ are quaternionic
CR-submanifolds \cite{BCU}.

(2) Let $F : (M,E,g_M) \mapsto (N,g_N)$ be an h-conformal
semi-invariant submersion. Then the map $F$ is also an almost
h-conformal semi-invariant submersion.
\end{remark}

\noindent Let $F : (M, E, g_M) \mapsto (N, g_N)$ be an almost
h-conformal semi-invariant submersion with an almost h-conformal
semi-invariant basis $\{ I,J,K \}$.

Denote the orthogonal complement of $R\mathcal{D}_2^R$ in $(\ker
F_*)^{\perp}$ by $\mu^R$ for $R\in \{ I,J,K \}$. We easily see that
$\mu^R$ is $R$-invariant for $R\in \{ I,J,K \}$.

Then given $X\in \Gamma(\ker F_*)$, we write
\begin{equation}\label{eq8}
RX=\phi_R X+\omega_R X,
\end{equation}
where $\phi_R X\in \Gamma(\mathcal{D}_1^R)$ and $\omega_R X\in
\Gamma(R\mathcal{D}_2^R)$ for $R\in \{ I,J,K \}$.

Given $Z\in \Gamma((\ker F_*)^{\perp})$, we get
\begin{equation}\label{eq9}
RZ=B_R Z+C_R Z,
\end{equation}
where $B_R Z\in \Gamma(\mathcal{D}_2^R)$ and $C_R Z\in
\Gamma(\mathcal{\mu}^R)$ for $R\in \{ I,J,K \}$.

We see that
\begin{equation}\label{eqn91}
(\ker F_*)^{\perp} = R\mathcal{D}_2^R \oplus \mu^R \quad \text{for}
\ R\in \{ I,J,K \}
\end{equation}
and
\begin{equation}\label{eqn92}
g_M (C_R X, RV) = 0
\end{equation}
for $X\in \Gamma((\ker F_*)^{\perp})$ and $V\in
\Gamma(\mathcal{D}_2^R)$.

Define
\begin{equation}\label{eq10}
(\nabla_X \phi_R)Y := \widehat{\nabla}_X \phi_R Y-\phi_R
\widehat{\nabla}_X Y
\end{equation}
and
\begin{equation}\label{eq11}
(\nabla_X \omega_R)Y := \mathcal{H}\nabla_X \omega_R
Y-\omega_R\widehat{\nabla}_X Y
\end{equation}
for $X,Y\in \Gamma(\ker F_*)$ and $R\in \{ I,J,K \}$.

Then we easily obtain

\begin{lemma}\label{lemm1}
Let $F$ be an almost h-conformal semi-invariant submersion from a
hyperk\"{a}hler manifold $(M,I,J,K,g_M)$ onto a Riemannian manifold
$(N, g_N)$ such that $(I,J,K)$ is an almost h-conformal
semi-invariant basis. Then we get

\begin{enumerate}
\item
\begin{align*}
  &\widehat{\nabla}_X \phi_R Y+\mathcal{T}_X \omega_R Y = \phi_R\widehat{\nabla}_X Y+B_R\mathcal{T}_X Y    \\
  &\mathcal{T}_X \phi_R Y+\mathcal{H}\nabla_X \omega_R Y =
  \omega_R\widehat{\nabla}_X Y+C_R\mathcal{T}_X Y
\end{align*}
for $X,Y\in \Gamma(\ker F_*)$ and $R\in \{ I,J,K \}$.
\item
\begin{align*}
  &\mathcal{V}\nabla_Z B_RW+\mathcal{A}_Z C_RW = \phi_R\mathcal{A}_Z W+B_R\mathcal{H}\nabla_Z W    \\
  &\mathcal{A}_Z B_RW+\mathcal{H}\nabla_Z C_RW = \omega_R\mathcal{A}_Z
  W+C_R\mathcal{H}\nabla_Z W
\end{align*}
for $Z,W\in \Gamma((\ker F_*)^{\perp})$ and $R\in \{ I,J,K \}$.
\item
\begin{align*}
  &\widehat{\nabla}_X B_RZ+\mathcal{T}_X C_RZ = \phi_R\mathcal{T}_X Z+B_R\mathcal{H}\nabla_X Z    \\
  &\mathcal{T}_X B_RZ+\mathcal{H}\nabla_X C_RZ =
  \omega_R\mathcal{T}_X Z+C_R\mathcal{H}\nabla_X Z
\end{align*}
for $X\in \Gamma(\ker F_*)$, $Z\in \Gamma((\ker F_*)^{\perp})$, and
$R\in \{ I,J,K \}$.
\end{enumerate}
\end{lemma}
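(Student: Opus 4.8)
The plan is to derive all six identities by a single mechanism: expand $R(\nabla_E F)$ using the hyperk\"ahler condition $\nabla R = 0$ (so that $\nabla_E RF = R\nabla_E F$), then decompose both sides into their components relative to the splittings $TM = \ker F_* \oplus (\ker F_*)^\perp$ and the refinements $\ker F_* = \mathcal{D}_1^R \oplus \mathcal{D}_2^R$, $(\ker F_*)^\perp = R\mathcal{D}_2^R \oplus \mu^R$, using the O'Neill tensors $\mathcal{T}, \mathcal{A}$ and the operators $\phi_R, \omega_R, B_R, C_R$ from (\ref{eq8})--(\ref{eq11}). The three items correspond to the three choices of where the two arguments lie: both vertical (item 1), both horizontal (item 2), and mixed (item 3).

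For item (1), I would take $X, Y \in \Gamma(\ker F_*)$ and start from $\nabla_X (RY) = R(\nabla_X Y)$. On the left, write $RY = \phi_R Y + \omega_R Y$ with $\phi_R Y \in \Gamma(\mathcal{D}_1^R) \subset \Gamma(\ker F_*)$ and $\omega_R Y \in \Gamma(R\mathcal{D}_2^R) \subset \Gamma((\ker F_*)^\perp)$, so $\nabla_X(RY) = \nabla_X \phi_R Y + \nabla_X \omega_R Y$; by the definition (\ref{eq4}) of $\mathcal{T}$ and the definition of $\widehat{\nabla}$, the first term has vertical part $\widehat{\nabla}_X \phi_R Y$ and horizontal part $\mathcal{T}_X \phi_R Y$, while the second term has vertical part $\mathcal{T}_X \omega_R Y$ and horizontal part $\mathcal{H}\nabla_X \omega_R Y$. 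On the right, $\nabla_X Y = \widehat{\nabla}_X Y + \mathcal{T}_X Y$, so $R(\nabla_X Y) = \phi_R \widehat{\nabla}_X Y + \omega_R \widehat{\nabla}_X Y + B_R \mathcal{T}_X Y + C_R \mathcal{T}_X Y$; here $\phi_R\widehat{\nabla}_X Y$ and $B_R\mathcal{T}_X Y$ are vertical while $\omega_R\widehat{\nabla}_X Y$ and $C_R\mathcal{T}_X Y$ are horizontal. Equating vertical parts gives the first equation of (1), equating horizontal parts gives the second. Items (2) and (3) run identically: for (2) start from $\nabla_Z(RW) = R(\nabla_Z W)$ with $Z, W \in \Gamma((\ker F_*)^\perp)$, writing $RW = B_R W + C_R W$ and using (\ref{eq3}) for $\mathcal{A}$ together with $\mathcal{H}\nabla$, $\mathcal{V}\nabla$; for (3) start from $\nabla_X(RZ) = R(\nabla_X Z)$ with $X$ vertical and $Z$ horizontal, again splitting $RZ = B_R Z + C_R Z$ and $\nabla_X Z$ into its $\mathcal{T}_X Z$ and $\mathcal{H}\nabla_X Z$ parts.

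There is no serious obstacle here — this is a bookkeeping argument — but the one point requiring care is correctly matching each covariant-derivative fragment to the right O'Neill tensor and projection, i.e. remembering that for a vertical vector field $V$ one has $\nabla_E V$ contributing through $\mathcal{T}$ (resp. $\mathcal{A}$) according to whether $E$ is vertical (resp. horizontal), and conversely for horizontal fields; a sign or a $\mathcal{V}$/$\mathcal{H}$ swap would propagate into a wrong identity. One should also note that the hyperk\"ahler hypothesis (rather than merely quaternionic K\"ahler) is exactly what licenses $\nabla_E(RF) = R\nabla_E F$ with no correction terms from the connection 1-forms $\omega_\alpha$, and that $R$ being a $g_M$-isometry guarantees the listed orthogonal decompositions are respected so the component extraction is legitimate. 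Finally, since the decomposition $(\ker F_*)^\perp = R\mathcal{D}_2^R \oplus \mu^R$ and the $R$-invariance of $\mu^R$ were established just above (see (\ref{eqn91})), the projections $B_R, C_R$ land in the claimed subbundles automatically, so each of the six equations is obtained simply by reading off the two halves of one vector equation. I would present item (1) in full and then remark that (2) and (3) follow by the same computation applied to $\nabla_Z(RW)$ and $\nabla_X(RZ)$ respectively.
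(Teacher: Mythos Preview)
Your proposal is correct and is exactly the computation the paper has in mind: the paper introduces this lemma with the phrase ``Then we easily obtain'' and gives no proof, because the six identities are precisely the vertical/horizontal components of the single equation $\nabla_E(RF)=R\nabla_E F$ (valid since $\nabla R=0$ in the hyperk\"ahler setting), read off after inserting the decompositions (\ref{eq8})--(\ref{eq9}) and the O'Neill tensors (\ref{eq3})--(\ref{eq4}). Your bookkeeping for item (1) is accurate, and the remark that (2) and (3) follow mutatis mutandis is appropriate.
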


\begin{remark}
By (\ref{eq10}), (\ref{eq11}), and Lemma \ref{lemm1} (1), we have
\begin{equation}\label{eqn10}
(\nabla_X \omega_R)Y = B_R\mathcal{T}_X Y - \mathcal{T}_X \omega_R Y
\end{equation}
\begin{equation}\label{eqn11}
(\nabla_X \omega_R)Y = C_R\mathcal{T}_X Y - \mathcal{T}_X \phi_R Y
\end{equation}
for $X,Y\in \Gamma(\ker F_*)$ and $R\in \{ I,J,K \}$.
\end{remark}

Now, we investigate the integrability of some distributions.

\begin{lemma}
Let $F$ be an h-conformal semi-invariant submersion from a
hyperk\"{a}hler manifold $(M,I,J,K,g_M)$ onto a Riemannian manifold
$(N, g_N)$ such that $(I,J,K)$ is an h-conformal semi-invariant
basis. Then we have

(i) the distribution $\mathcal{D}_2$ is always integrable.

(ii) the following conditions are equivalent:

(a) the distribution $\mathcal{D}_1$ is  integrable.

(b) $(\nabla F_*)(W,IV) - (\nabla F_*)(V,IW)\in \Gamma(F_* \mu^I)$
for $V,W\in \Gamma(\mathcal{D}_1)$.

(c) $(\nabla F_*)(W,JV) - (\nabla F_*)(V,JW)\in \Gamma(F_* \mu^J)$
for $V,W\in \Gamma(\mathcal{D}_1)$.

(b) $(\nabla F_*)(W,KV) - (\nabla F_*)(V,KW)\in \Gamma(F_* \mu^K)$
for $V,W\in \Gamma(\mathcal{D}_1)$
\end{lemma}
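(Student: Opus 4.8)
The plan is to prove the three parts by translating integrability of a distribution $\mathcal{D}$ into the statement that $g_M([V,W],\cdot)=0$ for the relevant test vector fields, and then to convert these inner products into statements about the second fundamental form $\nabla F_*$ using Lemma~\ref{lem1} (equations (\ref{eqn77})--(\ref{eqn79})) together with the hyperk\"{a}hler condition $\nabla R = 0$ for $R\in\{I,J,K\}$.

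For part (i), I would take $V,W\in\Gamma(\mathcal{D}_2)$ and show $[V,W]\in\Gamma(\ker F_*)$ automatically (this is just (\ref{eqn5})-type reasoning, since $\ker F_*$ is integrable), so it remains to show $g_M([V,W],X)=0$ for $X\in\Gamma(\mathcal{D}_1)$. Write $[V,W]=\nabla_V W-\nabla_W V$ and use $g_M(\nabla_V W,X)=g_M(R\nabla_V W,RX)$; since $M$ is hyperk\"{a}hler, $R\nabla_V W=\nabla_V(RW)$, and $RW\in\Gamma(R\mathcal{D}_2)\subset\Gamma((\ker F_*)^\perp)$ while $RX\in\Gamma(\mathcal{D}_1)\subset\Gamma(\ker F_*)$. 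Then $g_M(\nabla_V RW, RX)=-g_M(RW,\nabla_V RX)$, and one expresses $\nabla_V RX$ via the O'Neill tensor $\mathcal{T}$ and $\widehat{\nabla}$; the point is that $\widehat{\nabla}_V RX$ and $R\widehat{\nabla}_V X$ lie in $\ker F_*$ which is orthogonal to $RW$, so the only surviving term is $g_M(RW,\mathcal{T}_V RX)$, and by the skew-symmetry (\ref{eqn22}) and a short symmetrization in $V,W$ one checks the antisymmetric combination vanishes. The same kind of bookkeeping, done carefully, gives integrability.

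For part (ii), the key identity is that for $V,W\in\Gamma(\mathcal{D}_1)$, $\mathcal{D}_1$ is integrable iff $g_M([V,W],Z)=0$ for all $Z\in\Gamma(\mathcal{D}_2)$ and all $Z\in\Gamma((\ker F_*)^\perp)$; the $\mathcal{D}_2$ part I expect to be automatic or to reduce to the horizontal part, so the real content is $g_M([V,W],Z)=0$ for $Z\in\Gamma((\ker F_*)^\perp)$. Using $I$-invariance of $\mathcal{D}_1$ write $g_M([V,W],Z)=g_M(I[V,W],IZ)$, decompose $IZ=B_IZ+C_IZ$ with $B_IZ\in\Gamma(\mathcal{D}_2^I)$, $C_IZ\in\Gamma(\mu^I)$, and handle the two pieces. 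The $C_IZ$ piece: one shows $g_M([V,W],Z)$ pairs only against $\mu^I$ precisely when a certain $\nabla F_*$-expression lands in $\Gamma(F_*\mu^I)$ — here I would use (\ref{eqn79}) to write $F_*(\mathcal{A}_W IV)=-(\nabla F_*)(W,IV)$ and similarly with $V,W$ swapped, then relate $\mathcal{A}_W IV-\mathcal{A}_V IW$ to $\mathcal{V}[\ ,\ ]$ and the projection of $[V,W]$; the horizontal conformality enters through Proposition~\ref{prop1}. The equivalence (a)$\Leftrightarrow$(b) then falls out, and (c), (d) are literally the same argument with $I$ replaced by $J$ and $K$, which is legitimate because $(I,J,K)$ is an h-conformal semi-invariant basis so the \emph{same} $\mathcal{D}_1,\mathcal{D}_2$ work for all three.

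The main obstacle I anticipate is the careful tracking of which O'Neill-tensor / projection term survives and which vanishes by orthogonality: each step produces several terms ($\phi_R$, $\omega_R$, $B_R$, $C_R$ components and $\mathcal{T}$, $\mathcal{A}$, $\widehat{\nabla}$ pieces), and one must repeatedly invoke that $\mathcal{D}_1\perp(\ker F_*)^\perp$, that $\mathcal{D}_1\perp\mathcal{D}_2$, that $R\mathcal{D}_2^R\perp\mu^R$, and the skew-symmetries (\ref{eqn22}), (\ref{eqn23}). The cleanest route is to first record a sublemma: for $V,W\in\Gamma(\mathcal{D}_1)$ and $Z\in\Gamma((\ker F_*)^\perp)$, $g_M([V,W],Z) = g_M(\mathcal{T}_V\phi_IW-\mathcal{T}_W\phi_IV,\ IZ) + g_M(\mathcal{H}\nabla_V\omega_IW-\mathcal{H}\nabla_W\omega_IV,\ IZ)$ up to terms in $\ker F_*$ that die against $Z$; then feed this through (\ref{eqn77})--(\ref{eqn79}) to get the $\nabla F_*$-formulation. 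Once that sublemma is in hand, parts (b), (c), (d) are immediate restatements and part (i) is the degenerate case where $\phi_I,\phi_J,\phi_K$ are replaced by the zero map on $\mathcal{D}_2$-inputs.
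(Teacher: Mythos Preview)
Your proposal for part (ii) contains a genuine error that would make the argument fail. You write that since $[V,W]\in\Gamma(\ker F_*)$ automatically, ``the $\mathcal{D}_2$ part I expect to be automatic or to reduce to the horizontal part, so the real content is $g_M([V,W],Z)=0$ for $Z\in\Gamma((\ker F_*)^\perp)$.'' This is exactly backwards. Because $V,W\in\Gamma(\ker F_*)$ and $\ker F_*$ is integrable, the horizontal pairing $g_M([V,W],Z)$ for $Z\in\Gamma((\ker F_*)^\perp)$ vanishes identically; the only nontrivial obstruction is $g_M([V,W],Z)$ for $Z\in\Gamma(\mathcal{D}_2)$. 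Consequently your proposed sublemma (testing against horizontal $Z$) is a tautology---both sides are zero, since moreover $\omega_I W=0$ for $W\in\Gamma(\mathcal{D}_1)$---and your use of (\ref{eqn79}) is misplaced because $W\in\Gamma(\mathcal{D}_1)$ is vertical, not horizontal. The paper instead tests against $Z\in\Gamma(\mathcal{D}_2)$: then $RZ\in\Gamma((\ker F_*)^\perp)$, and since $RV,RW\in\Gamma(\mathcal{D}_1)\subset\Gamma(\ker F_*)$ one has, by horizontal conformality and (\ref{eqn78}),
\[
g_M([V,W],Z)=\tfrac{1}{\lambda^2}g_N\bigl(F_*\nabla_V RW - F_*\nabla_W RV,\;F_*RZ\bigr)
=\tfrac{1}{\lambda^2}g_N\bigl((\nabla F_*)(W,RV)-(\nabla F_*)(V,RW),\;F_*RZ\bigr),
\]
which vanishes for all such $Z$ iff the expression is orthogonal to $F_*(R\mathcal{D}_2)$, i.e.\ lies in $\Gamma(F_*\mu^R)$.

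For part (i) your outline reaches the correct expression $g_M([V,W],X)=g_M(\mathcal{T}_V RW-\mathcal{T}_W RV,\,RX)$, but ``a short symmetrization in $V,W$'' does not by itself show this vanishes. The paper proves the separate claim $\mathcal{T}_V RW=\mathcal{T}_W RV$ for $V,W\in\Gamma(\mathcal{D}_2)$ by a chain that uses $\nabla_V X-\nabla_X V=[V,X]\in\Gamma(\ker F_*)$ (cf.\ (\ref{eqn5})) to swap subscripts: $g_M(\mathcal{T}_V RW,X)=-g_M(RW,\nabla_V X)=-g_M(RW,\nabla_X V)=\cdots=g_M(\mathcal{T}_W RV,X)$. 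That subscript-swap is the missing idea in your sketch.
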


\begin{proof}
By (\ref{eq5}), we have $[V,W]\in \Gamma(\ker F_*)$ for $V,W\in
\Gamma(\ker F_*)$.

We claim that $\mathcal{T}_V RW = \mathcal{T}_W RV$ for $V,W\in
\Gamma(\mathcal{D}_2)$ and $R\in \{ I,J,K \}$.

Given $X\in \Gamma(\ker F_*)$, we get
\begin{align*}
  g_M (\mathcal{T}_V RW, X)
  &= -g_M (RW, \nabla_V X) = -g_M (RW, \nabla_X V)
  = g_M (\nabla_X RW, V)  \\
  &= -g_M (\nabla_X W, RV) = -g_M (\nabla_W X, RV) = g_M (X, \nabla_W RV)   \\
  &= g_M (X, \mathcal{T}_W RV),
\end{align*}
which means our claim.

Given $V,W\in \Gamma(\mathcal{D}_2)$ and $Z\in
\Gamma(\mathcal{D}_1)$, we obtain
$$
g_M ([V,W], Z) = g_M (\nabla_V W - \nabla_W V, Z) = g_M
(\mathcal{T}_V RW - \mathcal{T}_W RV, RZ) = 0,
$$
which implies (i).

For (ii), given $V,W\in \Gamma(\mathcal{D}_1)$, $Z\in
\Gamma(\mathcal{D}_2)$, and $R\in \{ I,J,K \}$, we have
\begin{align*}
  g_M ([V,W], Z)
  &= \frac{1}{\lambda^2} g_N (F_* \nabla_V RW - F_* \nabla_W RV, F_* RZ)  \\
  &= \frac{1}{\lambda^2} g_N ((\nabla F_*)(W,RV) - (\nabla F_*)(V,RW), F_* RZ)
\end{align*}
so that we get $(a) \Leftrightarrow (b)$, $(a) \Leftrightarrow (c)$,
 $(a) \Leftrightarrow (d)$.

Therefore, the result follows.
\end{proof}

\begin{theorem}\label{int1}
Let $F$ be an almost h-conformal semi-invariant submersion from a
hyperk\"{a}hler manifold $(M,I,J,K,g_M)$ onto a Riemannian manifold
$(N, g_N)$ such that $(I,J,K)$ is an almost h-conformal
semi-invariant basis. Then the following conditions are equivalent:

(a) the distribution $(\ker F_*)^{\perp}$ is integrable.

(b) $\mathcal{A}_Y \omega_I B_I X - \mathcal{A}_X \omega_I B_I Y +
\phi_I (\mathcal{A}_Y C_I X - \mathcal{A}_X C_I Y)\in
\Gamma(\mathcal{D}_2^I)$ and
\begin{align*}
  &\frac{1}{\lambda^2} g_N (\nabla_Y^F F_* C_I X - \nabla_X^F F_* C_I Y, F_* IV)  \\
  &= g_M (\mathcal{A}_Y B_I X - \mathcal{A}_X B_I Y - C_I Y(\ln \lambda)X + C_I X(\ln \lambda)Y   \\
  &+ 2g_M (X, C_I Y)\nabla(\ln \lambda), IV)
\end{align*}
for $X,Y\in \Gamma((\ker F_*)^{\perp})$ and $V\in
\Gamma(\mathcal{D}_2^I)$.

(c) $\mathcal{A}_Y \omega_J B_J X - \mathcal{A}_X \omega_J B_J Y +
\phi_J (\mathcal{A}_Y C_J X - \mathcal{A}_X C_J Y)\in
\Gamma(\mathcal{D}_2^J)$ and
\begin{align*}
  &\frac{1}{\lambda^2} g_N (\nabla_Y^F F_* C_J X - \nabla_X^F F_* C_J Y, F_* JV)  \\
  &= g_M (\mathcal{A}_Y B_J X - \mathcal{A}_X B_J Y - C_J Y(\ln \lambda)X + C_J X(\ln \lambda)Y   \\
  &+ 2g_M (X, C_J Y)\nabla(\ln \lambda), JV)
\end{align*}
for $X,Y\in \Gamma((\ker F_*)^{\perp})$ and $V\in
\Gamma(\mathcal{D}_2^J)$.

(d) $\mathcal{A}_Y \omega_K B_K X - \mathcal{A}_X \omega_K B_K Y +
\phi_K (\mathcal{A}_Y C_K X - \mathcal{A}_X C_K Y)\in
\Gamma(\mathcal{D}_2^K)$ and
\begin{align*}
  &\frac{1}{\lambda^2} g_N (\nabla_Y^F F_* C_K X - \nabla_X^F F_* C_K Y, F_* KV)  \\
  &= g_M (\mathcal{A}_Y B_K X - \mathcal{A}_X B_K Y - C_K Y(\ln \lambda)X + C_K X(\ln \lambda)Y   \\
  &+ 2g_M (X, C_K Y)\nabla(\ln \lambda), KV)
\end{align*}
for $X,Y\in \Gamma((\ker F_*)^{\perp})$ and $V\in
\Gamma(\mathcal{D}_2^K)$.
\end{theorem}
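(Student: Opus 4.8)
The plan is to characterize integrability of $(\ker F_*)^\perp$ by testing $g_M([X,Y],RV)$ for $X,Y\in\Gamma((\ker F_*)^\perp)$, $V\in\Gamma(\mathcal D_2^R)$, since $(\ker F_*)=\mathcal D_1^R\oplus\mathcal D_2^R$ and $\mathcal D_1^R=R(R\mathcal D_2^R)^{\perp}\cap\ker F_*$ is handled using that $\mu^R$ is $R$-invariant; more precisely, because $R$ is an isometry, $g_M([X,Y],Z)=0$ for all $Z\in\Gamma(\mathcal D_1^R)$ is equivalent to $g_M(R[X,Y],\omega_R Z)=0$ plus the $\mathcal D_2^R$ part, so the two stated conditions in each of (b), (c), (d) together amount to $[X,Y]\in\Gamma((\ker F_*)^\perp)$. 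Concretely, for the $\mathcal D_2^R$-component I would write, using $\nabla R=0$ (hyperkähler) and (\ref{eq8})--(\ref{eq9}),
\begin{align*}
g_M([X,Y],RV)&=g_M(\nabla_X Y-\nabla_Y X,RV)=-g_M(R\nabla_X Y-R\nabla_Y X,V)\\
&=-g_M(\nabla_X RY-\nabla_Y RX,V),
\end{align*}
then substitute $RX=B_RX+C_RX$, $RY=B_RY+C_RY$ and split each $\nabla_X(\cdot)$ into vertical/horizontal parts via the O'Neill tensors $\mathcal A$ and $\widehat\nabla$; this produces $\mathcal A_Y B_RX-\mathcal A_X B_RY$ terms and $\mathcal H\nabla_X C_RY$-type terms. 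For the latter I would pass to the base via $F_*$, using Lemma \ref{lem1}: $F_*(\mathcal A_X C_RY)=F_*(\nabla_X C_RY)=-(\nabla F_*)(X,C_RY)$, and then (\ref{eqn77}) to convert $(\nabla F_*)(X,C_RY)$ into the $C_RY(\ln\lambda)X$, $C_RX(\ln\lambda)Y$, and $g_M(X,C_RY)\nabla(\ln\lambda)$ terms, which is exactly the right-hand side of the displayed equation in (b); rearranging gives the stated $\frac{1}{\lambda^2}g_N(\nabla^F_YF_*C_RX-\nabla^F_XF_*C_RY,F_*RV)$ identity.

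For the $\mathcal D_1^R$-component, I would test $g_M([X,Y],\phi_RW)$ for $W\in\Gamma(\ker F_*)$ (noting every element of $\mathcal D_1^R$ is $\phi_R$ of something, since $\phi_R$ restricted to $\mathcal D_1^R$ is $R$ there). Again expand $g_M([X,Y],RW')$-type expressions; the key move is that $[X,Y]\in\ker F_*$ iff its vertical part captures everything, and one rewrites $g_M(\nabla_X RY-\nabla_Y RX,W')$ with $RY=\omega_R B_R(\cdot)+\dots$; here one uses Lemma \ref{lemm1}(2) to relate $\mathcal V\nabla_Z B_RW$ and $\mathcal A_Z C_RW$ to $\phi_R\mathcal A_ZW+B_R\mathcal H\nabla_ZW$. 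After the dust settles, the condition that the $\mathcal D_1^R$-component of $[X,Y]$ vanishes becomes $\mathcal A_Y\omega_R B_RX-\mathcal A_X\omega_R B_RY+\phi_R(\mathcal A_YC_RX-\mathcal A_XC_RY)\in\Gamma(\mathcal D_2^R)$, i.e.\ it has no $\mathcal D_1^R$-part — this is the first condition in (b)/(c)/(d). Since the three structures $I,J,K$ each individually give a semi-invariant decomposition of the same bundle $(\ker F_*)^\perp$ and the same $\ker F_*$, integrability of $(\ker F_*)^\perp$ is a single structure-independent statement, so each of (b), (c), (d) is separately equivalent to (a).

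The main obstacle I anticipate is the bookkeeping in the $\mathcal D_1^R$-component: unlike the $\mathcal D_2^R$ case, there is no clean ``push down by $F_*$'' shortcut, so one must carefully use $\nabla R=0$, the skew-symmetry relations (\ref{eqn22})--(\ref{eqn23}), and Lemma \ref{lemm1}(2) to collapse several $\mathcal V\nabla$ and $\mathcal A$ terms into the compact expression $\mathcal A_Y\omega_RB_RX-\mathcal A_X\omega_RB_RY+\phi_R(\mathcal A_YC_RX-\mathcal A_XC_RY)$, and to verify that demanding this lie in $\mathcal D_2^R$ is exactly the vanishing of the $\mathcal D_1^R$-part of $[X,Y]$. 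A secondary point to watch is the sign and coefficient ($2g_M(X,C_RY)\nabla(\ln\lambda)$) coming out of (\ref{eqn77}) when symmetrizing over $X,Y$; I would double-check that the cross terms $X(\ln\lambda)F_*C_RY$ etc.\ combine correctly after antisymmetrization. Once both components are pinned down, assembling $(a)\Leftrightarrow(b)$, and by the identical argument applied to $J$ and $K$, $(a)\Leftrightarrow(c)$ and $(a)\Leftrightarrow(d)$, completes the proof.
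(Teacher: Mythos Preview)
Your overall strategy matches the paper's: split the test for $[X,Y]\in\Gamma((\ker F_*)^\perp)$ into $\mathcal D_1^R$- and $\mathcal D_2^R$-components, use $\nabla R=0$ to pass from $[X,Y]$ to $\nabla_X RY-\nabla_Y RX$, expand via $RY=B_RY+C_RY$, and handle the horizontal--horizontal pairings with (\ref{eqn77}). The final assembly $(a)\Leftrightarrow(b)\Leftrightarrow(c)\Leftrightarrow(d)$ is exactly as you describe.

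There is, however, a slip in your $\mathcal D_2^R$ computation that would derail the calculation if carried out literally. You propose to test $g_M([X,Y],RV)$ with $V\in\Gamma(\mathcal D_2^R)$, but $RV\in R\mathcal D_2^R\subset(\ker F_*)^\perp$ is \emph{horizontal}, so this pairing says nothing about integrability. What the paper does (and what you presumably intend) is the reverse order: start from $g_M([X,Y],V)$ with $V$ vertical, then use that $R$ is an isometry to rewrite it as $g_M(\nabla_X RY-\nabla_Y RX,\,RV)$, now pairing against the horizontal vector $RV$. This orientation is not cosmetic: pairing $\nabla_X B_RY$ against the horizontal $RV$ picks out $\mathcal A_X B_RY$, whereas your written version $-g_M(\nabla_X RY-\nabla_Y RX,V)$ pairs against a vertical vector and would instead pick out $\widehat\nabla_X B_RY$ and $\mathcal A_X C_RY$, leading to an expression that does not match the statement. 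Once you reverse the direction, the $\mathcal A_Y B_RX-\mathcal A_X B_RY$ terms and the $\ln\lambda$ terms from (\ref{eqn77}) fall out exactly as claimed; note also that the paper uses (\ref{eqn92}) to kill the $g_M(C_RX,RV)$ cross terms, which is why only $-C_RY(\ln\lambda)X+C_RX(\ln\lambda)Y+2g_M(X,C_RY)\nabla(\ln\lambda)$ survive.

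For the $\mathcal D_1^R$-component the paper is more direct than your plan via Lemma \ref{lemm1}(2): testing against $W\in\Gamma(\mathcal D_1^R)$ and writing $g_M([X,Y],W)=g_M(\nabla_X RY-\nabla_Y RX,RW)$ with $RW\in\mathcal D_1^R$ vertical, one uses $\phi_R B_R=0$ (since $B_RY\in\mathcal D_2^R$) to get $g_M(\nabla_X B_RY,RW)=-g_M(\nabla_X\omega_RB_RY,W)=-g_M(\mathcal A_X\omega_RB_RY,W)$, and $g_M(\nabla_X C_RY,RW)=g_M(\mathcal A_X C_RY,RW)=-g_M(\phi_R\mathcal A_X C_RY,W)$, which immediately gives the first condition in (b)/(c)/(d) without any further lemma.
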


\begin{proof}
Given $X,Y\in \Gamma((\ker F_*)^{\perp})$, $W\in
\Gamma(\mathcal{D}_1^R)$, and $R\in \{ I,J,K \}$, we have
\begin{align*}
  g_M ([X,Y], W)
  &= g_M (\nabla_X B_R Y, RW) + g_M (\nabla_X C_R Y, RW)  \\
  &- g_M (\nabla_Y B_R X, RW) - g_M (\nabla_Y C_R X, RW)  \\
  &= -g_M (\nabla_X RB_R Y, W) + g_M (\mathcal{A}_X C_R Y, RW)  \\
  &+ g_M (\nabla_Y RB_R X, W) - g_M (\mathcal{A}_Y C_R X, RW)  \\
  &= -g_M (\nabla_X \omega_R B_R Y, W) - g_M (\phi_R\mathcal{A}_X C_R Y, W)  \\
  &+ g_M (\nabla_Y \omega_R B_R X, W) + g_M (\phi_R\mathcal{A}_Y C_R X, W) \ (\text{since} \ \phi_R B_R = 0)  \\
  &= g_M (\mathcal{A}_Y \omega_R B_R X - \mathcal{A}_X \omega_R B_R Y + \phi_R\mathcal{A}_Y C_R X - \phi_R\mathcal{A}_X C_R Y, W)
\end{align*}
so that
\begin{eqnarray}
  &&g_M ([X,Y], W) = 0 \quad \text{for} \ W\in \Gamma(\mathcal{D}_1^R)   \label{eqn177}  \\
  &&\Leftrightarrow \mathcal{A}_Y \omega_R B_R X - \mathcal{A}_X \omega_R B_R Y + \phi_R\mathcal{A}_Y C_R X - \phi_R\mathcal{A}_X C_R
  Y \in \Gamma(\mathcal{D}_2^R). \nonumber
\end{eqnarray}
Given $V\in \Gamma(\mathcal{D}_2^R)$, by using (\ref{eqn77}) and
(\ref{eqn92}), we get
\begin{align*}
  g_M ([X,Y], V)
  &= g_M (\nabla_X B_R Y, RV) + g_M (\nabla_X C_R Y, RV)  \\
  &- g_M (\nabla_Y B_R X, RV) - g_M (\nabla_Y C_R X, RV)  \\
  &= g_M (\mathcal{A}_X B_R Y - \mathcal{A}_Y B_R X, RV)  \\
  &+ \frac{1}{\lambda^2}g_N (-X(\ln \lambda)F_* C_R Y - C_R Y(\ln \lambda)F_* X   \\
  &+ g_M (X, C_R Y)F_* \nabla(\ln \lambda) + \nabla_X^F F_* C_R Y   \\
  &+ Y(\ln \lambda)F_* C_R X + C_R X(\ln \lambda)F_* Y - g_M (Y, C_R X)F_* \nabla(\ln \lambda)   \\
  &- \nabla_Y^F F_* C_R X, F_* RV)  \\
  &= g_M (\mathcal{A}_X B_R Y - \mathcal{A}_Y B_R X + C_R X(\ln \lambda)Y - C_R Y(\ln \lambda)X   \\
  &+ 2g_M (X, C_R Y)\nabla(\ln \lambda), RV) \\
  &-\frac{1}{\lambda^2}g_N (\nabla_Y^F F_* C_R X - \nabla_X^F F_* C_R Y, F_* RV)
\end{align*}
so that
\begin{eqnarray}
  &&g_M ([X,Y], V) = 0 \quad \text{for} \ V\in \Gamma(\mathcal{D}_2^R)  \label{eqn178}  \\
  &&\Leftrightarrow \frac{1}{\lambda^2}g_N (\nabla_Y^F F_* C_R X - \nabla_X^F F_* C_R Y, F_* RV) \nonumber   \\
  &&= g_M (\mathcal{A}_X B_R Y - \mathcal{A}_Y B_R X + C_R X(\ln \lambda)Y - C_R Y(\ln \lambda)X   \nonumber   \\
  &&+ 2g_M (X, C_R Y)\nabla(\ln \lambda), RV).  \nonumber
\end{eqnarray}
Using (\ref{eqn177}) and (\ref{eqn178}), we obtain $(a)
\Leftrightarrow (b)$, $(a) \Leftrightarrow (c)$, $(a)
\Leftrightarrow (d)$.

Therefore, we have the result.
\end{proof}

\begin{theorem}
Let $F$ be an almost h-conformal semi-invariant submersion from a
hyperk\"{a}hler manifold $(M,I,J,K,g_M)$ onto a Riemannian manifold
$(N, g_N)$ such that $(I,J,K)$ is an almost h-conformal
semi-invariant basis. Assume that the distribution $(\ker
F_*)^{\perp}$ is integrable. Then the following conditions are
equivalent:

(a) the map $F$ is horizontally homothetic.

(b) $\lambda^2 g_M (\mathcal{A}_Y B_I X - \mathcal{A}_X B_I Y, IV) =
g_N (\nabla_Y^F F_* C_I X - \nabla_X^F F_* C_I Y, F_* IV)$ for
$X,Y\in \Gamma((\ker F_*)^{\perp})$ and $V\in
\Gamma(\mathcal{D}_2^I)$.

(c) $\lambda^2 g_M (\mathcal{A}_Y B_J X - \mathcal{A}_X B_J Y, JV) =
g_N (\nabla_Y^F F_* C_J X - \nabla_X^F F_* C_J Y, F_* JV)$ for
$X,Y\in \Gamma((\ker F_*)^{\perp})$ and $V\in
\Gamma(\mathcal{D}_2^J)$.

(d) $\lambda^2 g_M (\mathcal{A}_Y B_K X - \mathcal{A}_X B_K Y, KV) =
g_N (\nabla_Y^F F_* C_K X - \nabla_X^F F_* C_K Y, F_* KV)$ for
$X,Y\in \Gamma((\ker F_*)^{\perp})$ and $V\in
\Gamma(\mathcal{D}_2^K)$.
\end{theorem}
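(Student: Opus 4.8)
The plan is to exploit the formula for $g_M([X,Y],V)$ that was derived in the proof of Theorem \ref{int1}, specialize it to the case where $(\ker F_*)^\perp$ is integrable (so the left-hand side vanishes), and then read off what the integrability identity says about the behaviour of $\lambda$. Recall that in that earlier computation, expanding $g_M([X,Y],V)$ for $V\in\Gamma(\mathcal{D}_2^R)$ produced, after using \eqref{eqn77} and \eqref{eqn92}, the identity
\begin{align*}
  g_M([X,Y],V) &= g_M(\mathcal{A}_X B_R Y - \mathcal{A}_Y B_R X + C_R X(\ln\lambda)Y - C_R Y(\ln\lambda)X \\
  &\quad + 2g_M(X,C_R Y)\nabla(\ln\lambda), RV) - \tfrac{1}{\lambda^2}g_N(\nabla_Y^F F_* C_R X - \nabla_X^F F_* C_R Y, F_* RV).
\end{align*}
Since $(\ker F_*)^\perp$ is assumed integrable, the left side is $0$, so for every $R\in\{I,J,K\}$, every $X,Y\in\Gamma((\ker F_*)^\perp)$, and every $V\in\Gamma(\mathcal{D}_2^R)$ we have
\begin{align*}
  \tfrac{1}{\lambda^2}g_N(\nabla_Y^F F_* C_R X - \nabla_X^F F_* C_R Y, F_* RV) &= g_M(\mathcal{A}_X B_R Y - \mathcal{A}_Y B_R X, RV) \\
  &\quad + g_M(C_R X(\ln\lambda)Y - C_R Y(\ln\lambda)X + 2g_M(X,C_R Y)\nabla(\ln\lambda), RV).
\end{align*}

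Next I would recall that $F$ is \emph{horizontally homothetic} precisely when the restriction of $\nabla(\ln\lambda)$ to $(\ker F_*)^\perp$ vanishes, i.e. $\mathcal{H}\nabla(\ln\lambda)=0$, equivalently $Z(\ln\lambda)=0$ for all $Z\in\Gamma((\ker F_*)^\perp)$. The direction $(a)\Rightarrow(b),(c),(d)$ is then immediate: if $F$ is horizontally homothetic, then in the displayed identity the terms $C_R X(\ln\lambda)$, $C_R Y(\ln\lambda)$ all vanish (since $C_R X,C_R Y\in\Gamma(\mu^R)\subset\Gamma((\ker F_*)^\perp)$), and the term $g_M(2g_M(X,C_R Y)\nabla(\ln\lambda),RV)$ vanishes because $RV\in\Gamma((\ker F_*)^\perp)$ and $\nabla(\ln\lambda)$ has no horizontal component, leaving exactly $\tfrac{1}{\lambda^2}g_N(\nabla_Y^F F_* C_R X - \nabla_X^F F_* C_R Y, F_* RV) = g_M(\mathcal{A}_X B_R Y - \mathcal{A}_Y B_R X, RV)$, which upon multiplying by $\lambda^2$ and using antisymmetry of $\mathcal{A}$ (equation \eqref{eqn23}) to rewrite $g_M(\mathcal{A}_X B_R Y - \mathcal{A}_Y B_R X,RV) = g_M(\mathcal{A}_Y B_R X - \mathcal{A}_X B_R Y\cdot(-1)?,\dots)$—more carefully, $g_M(\mathcal{A}_X B_R Y,RV) = -g_M(B_R Y,\mathcal{A}_X RV)$, but it is cleaner simply to keep the form $\lambda^2 g_M(\mathcal{A}_Y B_R X - \mathcal{A}_X B_R Y, RV) = g_N(\nabla_Y^F F_* C_R X - \nabla_X^F F_* C_R Y, F_* RV)$ as stated in (b)–(d). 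So the equivalence with (b), (c), (d) should be recorded with the signs exactly as in Theorem \ref{int1}.

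For the converse direction, I would argue that if, say, (b) holds, then subtracting it from the integrability identity for $R=I$ forces $g_M(C_I X(\ln\lambda)Y - C_I Y(\ln\lambda)X + 2g_M(X,C_I Y)\nabla(\ln\lambda), IV) = 0$ for all $X,Y\in\Gamma((\ker F_*)^\perp)$ and all $V\in\Gamma(\mathcal{D}_2^I)$; since $I$ maps $\mathcal{D}_2^I$ isomorphically onto $I\mathcal{D}_2^I\subset(\ker F_*)^\perp$, as $V$ ranges over $\mathcal{D}_2^I$, $IV$ ranges over all of $I\mathcal{D}_2^I$, and by choosing $X,Y$ suitably (e.g. $X\in\Gamma(\mu^I)$ with $C_I X = X$, $Y$ arbitrary, so that the term $C_I Y(\ln\lambda)X$ can be isolated) one deduces $Z(\ln\lambda)=0$ first for $Z\in I\mathcal{D}_2^I$ and then, combining the three indices $I,J,K$, for all of $(\ker F_*)^\perp$. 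The main obstacle is this last bookkeeping: extracting $\mathcal{H}\nabla(\ln\lambda)=0$ from the vanishing of a single bilinear expression requires care in choosing the test vectors $X$, $Y$, $V$ so that each of the three terms can be separated (in particular distinguishing the $\mu^R$-component from the $R\mathcal{D}_2^R$-component of a general horizontal vector), and one must verify that the span of the available $IV$, $JV$, $KV$ together with the freedom in $X,Y$ is genuinely enough to conclude the horizontal gradient vanishes — essentially using that $\mathcal{D}_2^R\neq 0$ (otherwise the statement is vacuous) and that $R$ is an isometry. Everything else is a direct substitution into the already-established identity from Theorem \ref{int1}.
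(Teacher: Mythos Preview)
Your overall plan coincides with the paper's: reuse the identity \eqref{eq12} from the proof of Theorem~\ref{int1}, set $g_M([X,Y],V)=0$ by integrability, and compare. The forward direction $(a)\Rightarrow(b),(c),(d)$ is immediate from this, exactly as you say.

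The converse has a genuine gap. To prove $(b)\Rightarrow(a)$ you may only use $R=I$; conditions (c) and (d) are not assumed, so ``combining the three indices $I,J,K$'' is not an available move --- that route would at best yield $(b)\wedge(c)\wedge(d)\Rightarrow(a)$, which is strictly weaker than the stated four-way equivalence. What you are missing is that a \emph{single} $R$ already suffices, thanks to the decomposition $(\ker F_*)^\perp = R\mathcal{D}_2^R\oplus\mu^R$ of \eqref{eqn91}. Concretely, in the vanishing relation
\[
g_M\bigl(C_RX(\ln\lambda)\,Y - C_RY(\ln\lambda)\,X + 2g_M(X,C_RY)\nabla(\ln\lambda),\ RV\bigr)=0,
\]
first put $Y=RV$ (so $C_RY=0$); the relation collapses to $C_RX(\ln\lambda)\,g_M(RV,RV)=0$, giving $Z(\ln\lambda)=0$ for every $Z\in\Gamma(\mu^R)$. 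Next take $X\in\Gamma(\mu^R)$ and $Y=C_RX$; note that $C_R$ restricted to $\mu^R$ equals $R$, not the identity (your ``$C_IX=X$'' is a slip), so $Y=RX$ and $C_RY=C_R^2X=-X$. After the first step both directional-derivative terms vanish and only $-2g_M(X,X)\,g_M(\nabla(\ln\lambda),RV)=0$ survives, forcing $Z(\ln\lambda)=0$ for every $Z\in\Gamma(R\mathcal{D}_2^R)$. These two substitutions kill the entire horizontal gradient of $\ln\lambda$ using $R=I$ alone; the identical argument with $R=J$ or $R=K$ then gives $(c)\Rightarrow(a)$ and $(d)\Rightarrow(a)$ separately.
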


\begin{proof}
Given $X,Y\in \Gamma((\ker F_*)^{\perp})$, $V\in
\Gamma(\mathcal{D}_2^R)$, and $R\in \{ I,J,K \}$, from the proof of
Theorem \ref{int1}, we have
\begin{eqnarray}
  g_M ([X,Y], V)
  &=&g_M (\mathcal{A}_X B_R Y - \mathcal{A}_Y B_R X + C_R X(\ln \lambda)Y   \label{eq12} \\
  &&-C_R Y(\ln \lambda)X + 2g_M (X, C_R Y)\nabla(\ln \lambda), RV)  \nonumber \\
  &&-\frac{1}{\lambda^2}g_N (\nabla_Y^F F_* C_R X - \nabla_X^F F_* C_R Y, F_*
  RV).  \nonumber
\end{eqnarray}
Using (\ref{eq12}), it is easy to see $(a) \Rightarrow (b)$, $(a)
\Rightarrow (c)$, $(a) \Rightarrow (d)$.

Conversely, from (\ref{eq12}), we get
\begin{equation}\label{eq13}
g_M (C_R X(\ln \lambda)Y - C_R Y(\ln \lambda)X + 2g_M (X, C_R
Y)\nabla(\ln \lambda), RV) = 0
\end{equation}
Applying $Y = RV$ at (\ref{eq13}), we obtain
$$
g_M (\nabla(\ln \lambda), C_R X) g_M (RV, RV) = 0,
$$
which implies
\begin{equation}\label{eq14}
g_M (\nabla(\lambda), X) = 0 \quad \text{for} \ X\in \Gamma(\mu^R).
\end{equation}
Applying $Y = C_R X$, $X\in \Gamma(\mu^R)$, at (\ref{eq13}), we have
$$
2g_M (X, C_R^2 X) g_M (\nabla(\ln \lambda), RV) = -2g_M (X, X) g_M
(\nabla(\ln \lambda), RV) = 0,
$$
which implies
\begin{equation}\label{eq15}
g_M (\nabla(\lambda), RV) = 0  \quad \text{for} \  V\in
\Gamma(\mathcal{D}_2^R).
\end{equation}
By (\ref{eq14}) and (\ref{eq15}), we get $(b) \Rightarrow (a)$, $(c)
\Rightarrow (a)$, $(d) \Rightarrow (a)$.

Therefore, the result follows.
\end{proof}

We deal with some particular type of conformal submersions.

\begin{definition}
Let $F$ be an almost h-conformal semi-invariant submersion from an
almost quaternionic Hermitian manifold $(M,E,g_M)$ onto a Riemannian
manifold $(N,g_N)$. If $R(\mathcal{D}_2^R) = (\ker F_*)^{\perp}$ for
$R\in \{ I,K \}$ and $J(\ker F_*) = \ker F_*$ (i.e.,
$\mathcal{D}_2^J = \{ 0 \}$), then we call the map $F$ an {\em
almost h-conformal anti-holomorphic semi-invariant submersion}
\end{definition}

We call such a basis $\{ I,J,K \}$ an {\em almost h-conformal
anti-holomorphic semi-invariant basis}.

\begin{remark}
(1) We easily see that $J(\ker F_*) = \ker F_*$ implies $J((\ker
F_*)^{\perp}) = (\ker F_*)^{\perp}$.

(2) Let $F : (M,E,g_M) \mapsto (N,g_N)$ be an h-conformal
semi-invariant submersion. Then it is not possible to get
$R(\mathcal{D}_2) = (\ker F_*)^{\perp}$ for $R\in \{ I,J,K \}$. If
not, then $K(\mathcal{D}_2) = (\ker F_*)^{\perp}$ and
$K(\mathcal{D}_2) = IJ(\mathcal{D}_2) = I((\ker F_*)^{\perp}) =
\mathcal{D}_2$, contradiction!

So, our definition makes sense and its example is Example
\ref{exam1}.
\end{remark}

\begin{corollary}
Let $F$ be an almost h-conformal anti-holomorphic semi-invariant
submersion from a hyperk\"{a}hler manifold $(M,I,J,K,g_M)$ onto a
Riemannian manifold $(N, g_N)$ such that $(I,J,K)$ is an almost
h-conformal anti-holomorphic semi-invariant basis. Then the
following conditions are equivalent:

(a) the distribution $(\ker F_*)^{\perp}$ is integrable.

(b) $\mathcal{A}_{IV_1} IV_2 = \mathcal{A}_{IV_2} IV_1$ for
$V_1,V_2\in \Gamma(\mathcal{D}_2^I)$.

(c) $\mathcal{A}_{KV_1} KV_2 = \mathcal{A}_{KV_2} KV_1$ for
$V_1,V_2\in \Gamma(\mathcal{D}_2^K)$.
\end{corollary}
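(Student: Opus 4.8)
The plan is to specialize Theorem \ref{int1} to the anti-holomorphic setting and show that, under the hypothesis $R(\mathcal{D}_2^R) = (\ker F_*)^{\perp}$ for $R \in \{I,K\}$ and $\mathcal{D}_2^J = \{0\}$, the two conditions in part (b) (resp.\ part (c)) of that theorem collapse into the single tensorial identity $\mathcal{A}_{IV_1} IV_2 = \mathcal{A}_{IV_2} IV_1$ (resp.\ the $K$-version). First I would observe that since $I(\mathcal{D}_2^I) = (\ker F_*)^{\perp}$, we have $\mu^I = \{0\}$, hence $C_I \equiv 0$ and $B_I Z = I Z$ for $Z \in \Gamma((\ker F_*)^\perp)$; likewise $\omega_I$ restricted to $\mathcal{D}_2^I$ has image all of $(\ker F_*)^\perp$. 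The same holds with $I$ replaced by $K$. With $C_I = 0$, the second condition in Theorem \ref{int1}(b) becomes trivial ($0 = 0$ on both sides, since every $F_* C_I(\cdot)$ term vanishes and $\mathcal{A}_Y B_I X - \mathcal{A}_X B_I Y$ lies in $\ker F_*$ while the pairing is against $F_* IV$ — one must check this carefully, but both sides reduce to zero), so only the first condition survives.

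Next I would simplify the first condition of Theorem \ref{int1}(b). With $C_I = 0$ the term $\phi_I(\mathcal{A}_Y C_I X - \mathcal{A}_X C_I Y)$ vanishes, so the condition reads $\mathcal{A}_Y \omega_I B_I X - \mathcal{A}_X \omega_I B_I Y \in \Gamma(\mathcal{D}_2^I)$. Now write an arbitrary $X \in \Gamma((\ker F_*)^\perp)$ as $X = IV_1$ with $V_1 \in \Gamma(\mathcal{D}_2^I)$ (possible since $I\mathcal{D}_2^I = (\ker F_*)^\perp$), and similarly $Y = IV_2$. Then $B_I X = B_I(IV_1)$: applying $I$ to $IV_1 = B_I(IV_1) + C_I(IV_1) = B_I(IV_1)$ gives $-V_1 = I B_I(IV_1)$, and since $B_I(IV_1) \in \Gamma(\mathcal{D}_2^I)$ and $I$ is invertible on $\ker F_*$, a short computation with $I^2 = -\mathrm{id}$ yields $B_I(IV_1) = V_1$. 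Hence $\omega_I B_I X = \omega_I V_1 = I V_1 = X$ (using that $\omega_I$ on $\mathcal{D}_2^I$ is just the restriction of $I$, because $\phi_I$ kills $\mathcal{D}_2^I$). Therefore $\mathcal{A}_Y \omega_I B_I X - \mathcal{A}_X \omega_I B_I Y = \mathcal{A}_{IV_2} IV_1 - \mathcal{A}_{IV_1} IV_2$.

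Finally, I would argue that $\mathcal{A}_{IV_2} IV_1 - \mathcal{A}_{IV_1} IV_2 \in \Gamma(\mathcal{D}_2^I)$ is in fact equivalent to its vanishing. By Proposition \ref{prop1}, $\mathcal{A}_Z W - (-\mathcal{A}_W Z)$ type expressions are controlled; more directly, for horizontal $Z, W$ one has $\mathcal{A}_Z W \in \Gamma(\ker F_*)$ always, so the difference automatically lies in $\ker F_* = \mathcal{D}_1^I \oplus \mathcal{D}_2^I$, and the condition ``$\in \Gamma(\mathcal{D}_2^I)$'' says its $\mathcal{D}_1^I$-component vanishes. But from Proposition \ref{prop1}, $\mathcal{A}_Z W = \tfrac{1}{2}\{\mathcal{V}[Z,W] - \lambda^2 g_M(Z,W)\nabla_{\mathcal V}(1/\lambda^2)\}$, so $\mathcal{A}_{IV_2}IV_1 - \mathcal{A}_{IV_1}IV_2 = \tfrac{1}{2}(\mathcal{V}[IV_2,IV_1] - \mathcal{V}[IV_1,IV_2]) = \mathcal{V}[IV_2,IV_1]$, which has no reason to lie in $\mathcal{D}_2^I$ unless it is antisymmetrized away — actually the cleaner route is: $\mathcal{A}_{IV_1}IV_2 = \tfrac12(\mathcal{V}[IV_1,IV_2] - \lambda^2 g_M(IV_1,IV_2)\nabla_{\mathcal V}(1/\lambda^2))$ and $\mathcal{A}_{IV_2}IV_1 = \tfrac12(\mathcal{V}[IV_2,IV_1] - \lambda^2 g_M(IV_2,IV_1)\nabla_{\mathcal V}(1/\lambda^2))$, so the difference is exactly $\tfrac12(\mathcal{V}[IV_1,IV_2] - \mathcal{V}[IV_2,IV_1]) = \mathcal{V}[IV_1,IV_2]$. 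Then $(\ker F_*)^\perp$ is integrable iff $\mathcal{V}[IV_1,IV_2] = 0$ for all $V_1,V_2$ (since the $IV_i$ span the horizontal distribution and $[\text{horizontal},\text{horizontal}]$ is horizontal iff its vertical part vanishes), which is iff $\mathcal{A}_{IV_1}IV_2 = \mathcal{A}_{IV_2}IV_1$. Running the identical argument with $K$ in place of $I$ gives $(a) \Leftrightarrow (c)$.

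The main obstacle I anticipate is the bookkeeping in showing the second equation of Theorem \ref{int1}(b) becomes vacuous: one must verify that when $C_I = 0$ both sides genuinely reduce to $0$, in particular that the surviving term $g_M(\mathcal{A}_Y B_I X - \mathcal{A}_X B_I Y, IV)$ pairs a vertical vector against a horizontal one and hence vanishes — this uses $\mathcal{A}_Y B_I X \in \Gamma(\ker F_*)$ and $IV \in \Gamma((\ker F_*)^\perp)$, which holds precisely because $V \in \Gamma(\mathcal{D}_2^I)$ in the anti-holomorphic case. Once that is dispatched, the rest is the substitution $X = IV_1$, $Y = IV_2$ and an application of Proposition \ref{prop1}, both routine.
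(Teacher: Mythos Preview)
Your claim that the second condition of Theorem~\ref{int1}(b) becomes vacuous is incorrect, and this is the one genuine error in the proposal. When $Y$ is horizontal and $B_I X$ is vertical, the tensor $\mathcal{A}_Y B_I X = \mathcal{H}\nabla_Y B_I X$ is \emph{horizontal}, not vertical (see the definition (\ref{eq3})). Since $IV\in\Gamma((\ker F_*)^{\perp})$ for $V\in\Gamma(\mathcal{D}_2^I)$, the pairing $g_M(\mathcal{A}_Y B_I X-\mathcal{A}_X B_I Y,\,IV)$ is a pairing of two horizontal vectors and has no reason to vanish. (You also have a sign slip: $B_I(IV_1)=I(IV_1)=-V_1$, not $V_1$; this is harmless but worth correcting.) In fact the paper's proof uses \emph{both} conditions of Theorem~\ref{int1} non-trivially: the first condition says the $\mathcal{D}_1^I$-component of $\mathcal{A}_{IV_1}IV_2-\mathcal{A}_{IV_2}IV_1$ vanishes, while the second---after observing $g_M([X,Y],V)=g_M(\mathcal{A}_X Y-\mathcal{A}_Y X,V)$---says the $\mathcal{D}_2^I$-component vanishes; together they force the difference to be zero.

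That said, the direct argument you sketch in your final paragraph is correct and is actually cleaner than the paper's route through Theorem~\ref{int1}. Since $I\mathcal{D}_2^I=(\ker F_*)^{\perp}$, every horizontal field is of the form $IV$ with $V\in\Gamma(\mathcal{D}_2^I)$, so integrability of $(\ker F_*)^{\perp}$ is exactly $\mathcal{V}[IV_1,IV_2]=0$ for all such $V_1,V_2$. For horizontal $X,Y$ one always has $\mathcal{A}_X Y-\mathcal{A}_Y X=\mathcal{V}\nabla_X Y-\mathcal{V}\nabla_Y X=\mathcal{V}[X,Y]$ (this follows directly from the definition, or from Proposition~\ref{prop1} after the symmetric $\lambda$-terms cancel), whence $(a)\Leftrightarrow(b)$ and likewise $(a)\Leftrightarrow(c)$. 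So your proof survives, but only because this last paragraph stands on its own; the earlier attempt to specialize Theorem~\ref{int1} by discarding its second condition does not.
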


\begin{proof}
We see that $C_R = 0$, $B_R = R$ on $(\ker F_*)^{\perp}$ and
$\omega_R = R$ on $\mathcal{D}_2^R$ for $R\in \{ I,K \}$.

Applying $X = RV_1$ and $Y = RV_2$, $V_1,V_2\in
\Gamma(\mathcal{D}_2^R)$, at Theorem \ref{int1}, we have
$$
\mathcal{A}_{RV_1} RV_2 - \mathcal{A}_{RV_2} RV_1 \in
\Gamma(\mathcal{D}_2^R)
$$
and
$$
0 = g_M (\mathcal{A}_{RV_2} RV_1 - \mathcal{A}_{RV_1} RV_2, V) \quad
\text{for} \ V\in \Gamma(\mathcal{D}_2^R),
$$
which are equivalent to
$$
\mathcal{A}_{RV_1} RV_2 = \mathcal{A}_{RV_2} RV_1 \quad \text{for} \
V_1,V_2\in \Gamma(\mathcal{D}_2^R).
$$
Hence, we get $(a) \Leftrightarrow (b)$, $(a) \Leftrightarrow (c)$.

Therefore, we obtain the result.
\end{proof}

We consider the geometry of foliations and the condition for such
maps to be horizontally homothetic.

\begin{theorem}\label{thm2}
Let $F$ be an almost h-conformal semi-invariant submersion from a
hyperk\"{a}hler manifold $(M,I,J,K,g_M)$ onto a Riemannian manifold
$(N, g_N)$ such that $(I,J,K)$ is an almost h-conformal
semi-invariant basis. Then the following conditions are equivalent:

(a) the distribution $(\ker F_*)^{\perp}$ defines a totally geodesic
foliation on $M$.

(b) $\mathcal{A}_X C_I Y + \mathcal{V}\nabla_X B_I Y\in
\Gamma(\mathcal{D}_2^I)$ and
$$
g_N (\nabla_X^F F_* IV, F_* C_I V) =
\lambda^2 g_M (\mathcal{A}_X B_I Y - C_I Y(\ln \lambda)X + g_M (X,
C_I Y) \nabla(\ln \lambda), IV)
$$
for $X,Y\in \Gamma((\ker F_*)^{\perp})$ and $V\in
\Gamma(\mathcal{D}_2^I)$.

(c) $\mathcal{A}_X C_J Y + \mathcal{V}\nabla_X B_J Y\in
\Gamma(\mathcal{D}_2^J)$ and
$$
g_N (\nabla_X^F F_* JV, F_* C_J V) = \lambda^2 g_M (\mathcal{A}_X
B_J Y - C_J Y(\ln \lambda)X + g_M (X, C_J Y) \nabla(\ln \lambda),
JV)
$$
for $X,Y\in \Gamma((\ker F_*)^{\perp})$ and $V\in
\Gamma(\mathcal{D}_2^J)$.

(d) $\mathcal{A}_X C_K Y + \mathcal{V}\nabla_X B_K Y\in
\Gamma(\mathcal{D}_2^K)$ and
$$
g_N (\nabla_X^F F_* KV, F_* C_K V) = \lambda^2 g_M (\mathcal{A}_X
B_K Y - C_K Y(\ln \lambda)X + g_M (X, C_K Y) \nabla(\ln \lambda),
KV)
$$
for $X,Y\in \Gamma((\ker F_*)^{\perp})$ and $V\in
\Gamma(\mathcal{D}_2^K)$.
\end{theorem}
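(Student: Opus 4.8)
The plan is to characterize when $(\ker F_*)^{\perp}$ defines a totally geodesic foliation by computing $g_M(\nabla_X Y, V)$ for $X,Y \in \Gamma((\ker F_*)^{\perp})$ and $V \in \Gamma(\ker F_*)$, and showing this vanishes exactly when conditions (b), (c), (d) hold. Since $\ker F_* = \mathcal{D}_1^R \oplus \mathcal{D}_2^R$ for each $R \in \{I,J,K\}$, it suffices to test against $W \in \Gamma(\mathcal{D}_1^R)$ and against $V \in \Gamma(\mathcal{D}_2^R)$ separately. Fix $R$; I would start from the hyperk\"ahler condition $\nabla R = 0$, which gives $\nabla_X Y = -R\nabla_X RY$, and decompose $RY = B_R Y + C_R Y$ using (\ref{eq9}). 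Then $g_M(\nabla_X Y, W) = -g_M(\nabla_X(B_R Y + C_R Y), RW)$; here $RW \in \Gamma(\mathcal{D}_1^R)$ since $W \in \Gamma(\mathcal{D}_1^R)$. Splitting $\nabla_X B_R Y = \mathcal{A}_X B_R Y + \mathcal{H}\nabla_X B_R Y$ (note $B_R Y \in \Gamma(\ker F_*)$, so the vertical part is $\mathcal{A}_X B_R Y$) and $\nabla_X C_R Y = \mathcal{V}\nabla_X C_R Y + \mathcal{H}\nabla_X C_R Y = \mathcal{A}_X C_R Y + \mathcal{H}\nabla_X C_R Y$ (since $C_R Y \in \Gamma((\ker F_*)^{\perp})$), and using that $\mathcal{H}$-components pair to zero against the vertical $RW$, I get $g_M(\nabla_X Y, W) = -g_M(\mathcal{A}_X B_R Y + \mathcal{A}_X C_R Y, RW)$. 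This vanishes for all $W \in \Gamma(\mathcal{D}_1^R)$ precisely when $\mathcal{A}_X C_R Y + \mathcal{A}_X B_R Y$ — and since $\mathcal{A}_X B_R Y = \mathcal{V}\nabla_X B_R Y$ — when $\mathcal{A}_X C_R Y + \mathcal{V}\nabla_X B_R Y \in \Gamma(\mathcal{D}_2^R)$, which is the first half of each condition.

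For the second half, I would compute $g_M(\nabla_X Y, V)$ for $V \in \Gamma(\mathcal{D}_2^R)$. Again writing $\nabla_X Y = -R\nabla_X RY$, I get $g_M(\nabla_X Y, V) = -g_M(\nabla_X(B_R Y + C_R Y), RV)$, where now $RV = \omega_R V \in \Gamma(R\mathcal{D}_2^R) \subset \Gamma((\ker F_*)^{\perp})$. For the $B_R Y$ term, $B_R Y$ is vertical so $g_M(\nabla_X B_R Y, RV) = g_M(\mathcal{A}_X B_R Y, RV)$ (only the horizontal part of $\nabla_X B_R Y$ survives, which is $\mathcal{A}_X B_R Y$). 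For the $C_R Y$ term, which is horizontal, I would pass through $F_*$: since $F$ is a horizontally conformal submersion, $g_M(\nabla_X C_R Y, RV) = -\tfrac{1}{\lambda^2} g_N((\nabla F_*)(X, C_R Y), F_* RV) + \tfrac{1}{\lambda^2} g_N(\nabla_X^F F_* C_R Y, F_* RV)$, and then I apply Lemma \ref{lem1} formula (\ref{eqn77}) to expand $(\nabla F_*)(X, C_R Y) = X(\ln\lambda)F_* C_R Y + C_R Y(\ln\lambda)F_* X - g_M(X, C_R Y)F_*(\nabla\ln\lambda)$. Since $F_* C_R Y$ and $F_* RV$ may not be orthogonal I keep all terms, but $g_N(F_* X, F_* RV) = \lambda^2 g_M(X, RV) = \lambda^2 g_M(X, \omega_R V)$ and $g_N(F_*(\nabla\ln\lambda), F_* RV) = \lambda^2 g_M(\nabla\ln\lambda, RV)$ (projecting the gradient to the horizontal space, which is harmless in the pairing). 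Collecting everything and setting $g_M(\nabla_X Y, V) = 0$ yields, after multiplying by $\lambda^2$, exactly the stated identity $g_N(\nabla_X^F F_* RV, F_* C_R Y) = \lambda^2 g_M(\mathcal{A}_X B_R Y - C_R Y(\ln\lambda)X + g_M(X, C_R Y)\nabla(\ln\lambda), RV)$ — I note the statement writes $F_* C_I V$ where it should read $F_* C_I Y$, and $g_N(\nabla_X^F F_* IV, \cdot)$ rather than on $F_* C_R Y$; I would phrase it as in (\ref{eqn77})-derived form so the symmetry $\nabla_X^F F_* C_R Y$ is manifest, or use Lemma \ref{lem1} carefully to keep the $\nabla_X^F F_* IV$ form by instead expanding $(\nabla F_*)(X, RV)$.

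Once the two equivalences $g_M(\nabla_X Y, W) = 0 \Leftrightarrow$ (first half) and $g_M(\nabla_X Y, V) = 0 \Leftrightarrow$ (second half) are established for each $R$, the conclusion is immediate: $(\ker F_*)^{\perp}$ defines a totally geodesic foliation iff $g_M(\nabla_X Y, V) = 0$ for all $X, Y \in \Gamma((\ker F_*)^{\perp})$ and all $V \in \Gamma(\ker F_*) = \Gamma(\mathcal{D}_1^R \oplus \mathcal{D}_2^R)$, which is precisely the conjunction of the two halves of condition (b) (taking $R = I$), and likewise (c) with $R = J$ and (d) with $R = K$. Hence (a) $\Leftrightarrow$ (b) $\Leftrightarrow$ (c) $\Leftrightarrow$ (d).

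I expect the main obstacle to be bookkeeping in the horizontal term of the $V$-computation: correctly routing $g_M(\nabla_X C_R Y, \omega_R V)$ through the pullback connection via the second fundamental form and the horizontally conformal identity (\ref{eqn77}), keeping track of which copies of $F_*$ land where, and not losing the $\ln\lambda$ terms or mis-signing them. The $W$-computation is routine once $\nabla R = 0$ is invoked; the $V$-computation requires care because $C_R Y \in \mu^R$ need not be $F$-basic and the conformal factor's gradient enters nontrivially. A minor separate point is confirming that the O'Neill tensor identities $\mathcal{A}_X B_R Y = \mathcal{V}\nabla_X B_R Y$ (for $B_R Y$ vertical, $X$ horizontal) and $\mathcal{A}_X C_R Y = \mathcal{V}\nabla_X C_R Y$ (for $C_R Y$ horizontal) are the ones being used — these follow directly from (\ref{eq3}).
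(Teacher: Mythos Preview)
Your overall strategy matches the paper's proof: split the vertical test vector into $\mathcal{D}_1^R$- and $\mathcal{D}_2^R$-parts, use $\nabla R=0$ to pass to $\nabla_X RY=\nabla_X B_RY+\nabla_X C_RY$, and for the $\mathcal{D}_2^R$-part route the horizontal piece through (\ref{eqn77}). However, your handling of the O'Neill tensor $\mathcal{A}$ contains a genuine error. From (\ref{eq3}), for $X$ horizontal and $B_RY$ \emph{vertical} one has
\[
\mathcal{A}_X B_RY=\mathcal{H}\nabla_X B_RY,
\]
not $\mathcal{V}\nabla_X B_RY$ as you assert twice (once in the $W$-computation, once in your closing remark). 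You also introduce a sign error: $g_M(\nabla_X Y,W)=g_M(R\nabla_X Y,RW)=g_M(\nabla_X RY,RW)$ with a $+$, not a $-$. These two mistakes happen to cancel, so you land on the correct condition $\mathcal{A}_X C_RY+\mathcal{V}\nabla_X B_RY\in\Gamma(\mathcal{D}_2^R)$, but the derivation as written is invalid. Note that in your $V$-computation you \emph{do} use the correct identity (``only the horizontal part of $\nabla_X B_RY$ survives, which is $\mathcal{A}_X B_RY$''), contradicting your earlier claim; fix the definition once and the $W$-step is simply $g_M(\nabla_X Y,W)=g_M(\mathcal{V}\nabla_X B_RY+\mathcal{A}_X C_RY,RW)$.

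For the $V$-part the paper takes a slightly cleaner route than yours: rather than expanding $(\nabla F_*)(X,C_RY)$, it writes $g_M(\nabla_X C_RY,RV)=-g_M(C_RY,\nabla_X RV)$ and applies (\ref{eqn77}) to $(\nabla F_*)(X,RV)$, which directly produces the term $\nabla_X^F F_*RV$ appearing in the statement (and uses (\ref{eqn92}) to kill the $X(\ln\lambda)$ term). Your expansion would instead yield $\nabla_X^F F_* C_RY$, so matching the statement as written requires the paper's route---you correctly anticipate this at the end. Your observation that $F_*C_IV$ in the statement should read $F_*C_IY$ is right.
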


\begin{proof}
Given $X,Y\in \Gamma((\ker F_*)^{\perp})$, $W\in
\Gamma(\mathcal{D}_1^R)$, and $R\in \{ I,J,K \}$, we obtain
$$
g_M (\nabla_X Y, W) = -g_M (\phi(\mathcal{A}_X C_R Y +
\mathcal{V}\nabla_X B_R Y), W)
$$
so that
\begin{equation}\label{eq16}
g_M (\nabla_X Y, W) = 0 \Leftrightarrow \mathcal{A}_X C_R Y +
\mathcal{V}\nabla_X B_R Y \in \Gamma(\mathcal{D}_2^R).
\end{equation}
Given $V\in \Gamma(\mathcal{D}_2^R)$, by using (\ref{eqn77}) and
(\ref{eqn92}),  we have
\begin{align*}
  g_M (\nabla_X Y, V)
  &= g_M (\mathcal{A}_X B_R Y, RV) - g_M (C_R Y, \nabla_X RV)  \\
  &= g_M (\mathcal{A}_X B_R Y, RV) + \frac{1}{\lambda^2}g_N (F_* C_R Y, RV(\ln \lambda)F_* X   \\
  &- g_M (X, RV)F_* \nabla(\ln \lambda) - \nabla_X^F F_* RV)  \\
  &= g_M (\mathcal{A}_X B_R Y + g_M (C_R Y, X)\nabla(\ln \lambda) - C_R Y(\ln \lambda)X, RV)    \\
  &- \frac{1}{\lambda^2}g_N (F_* C_R Y, \nabla_X^F F_* RV)
\end{align*}
so that
\begin{eqnarray}
  &&g_M (\nabla_X Y, V) = 0   \label{eq17}  \\
  &&\Leftrightarrow g_N (F_* C_R Y, \nabla_X^F F_* RV) =  \lambda^2g_M (\mathcal{A}_X B_R Y  \nonumber    \\
  &&+ g_M (C_R Y, X)\nabla(\ln \lambda) - C_R Y(\ln \lambda)X, RV). \nonumber
\end{eqnarray}
By (\ref{eq16}) and (\ref{eq17}), we get $(a) \Leftrightarrow (b)$,
$(a) \Leftrightarrow (c)$, $(a) \Leftrightarrow (d)$.

Therefore, the result follows.
\end{proof}

We introduce another notion on distributions and investigate it.

\begin{definition}
Let $F$ be an almost h-conformal semi-invariant submersion from a
hyperk\"{a}hler manifold $(M,I,J,K,g_M)$ onto a Riemannian manifold
$(N, g_N)$ such that $(I,J,K)$ is an almost h-conformal
semi-invariant basis. Given $R\in \{ I,J,K \}$, we call the
distribution $\mathcal{D}_2^R$ {\em parallel along $(\ker
F_*)^{\perp}$} if $\nabla_X V\in \Gamma(\mathcal{D}_2^R)$ for $X\in
\Gamma((\ker F_*)^{\perp})$ and $V\in \Gamma(\mathcal{D}_2^R)$.
\end{definition}

\begin{lemma}
Let $F$ be an almost h-conformal semi-invariant submersion from a
hyperk\"{a}hler manifold $(M,I,J,K,g_M)$ onto a Riemannian manifold
$(N, g_N)$ such that $(I,J,K)$ is an almost h-conformal
semi-invariant basis. Assume that the distribution $\mathcal{D}_2^R$
is parallel along $(\ker F_*)^{\perp}$ for $R\in \{ I,J,K \}$. Then
the following conditions are equivalent:

(a) the map $F$ is horizontally homothetic.

(b)
$$
\lambda^2g_M (\mathcal{A}_X B_I Y, IV) = g_N (\nabla_X^F F_* IV, F_*
C_I Y)
$$
for $X,Y\in \Gamma((\ker F_*)^{\perp})$ and $V\in
\Gamma(\mathcal{D}_2^I)$.

(c)
$$
\lambda^2g_M (\mathcal{A}_X B_J Y, JV) = g_N (\nabla_X^F F_* JV, F_*
C_J Y)
$$
for $X,Y\in \Gamma((\ker F_*)^{\perp})$ and $V\in
\Gamma(\mathcal{D}_2^J)$.

(d)
$$
\lambda^2g_M (\mathcal{A}_X B_K Y, KV) = g_N (\nabla_X^F F_* KV, F_*
C_K Y)
$$
for $X,Y\in \Gamma((\ker F_*)^{\perp})$ and $V\in
\Gamma(\mathcal{D}_2^K)$.
\end{lemma}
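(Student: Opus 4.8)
The plan is to run the computation from the proof of Theorem~\ref{thm2} essentially unchanged and then use the parallelism hypothesis to collapse it to the stated conditions. Since $F$ is a horizontally conformal submersion, $\lambda>0$ everywhere, so we may freely divide identities by $\lambda^{2}$.

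First I would record the consequence of parallelism: for $X,Y\in\Gamma((\ker F_{*})^{\perp})$, $V\in\Gamma(\mathcal{D}_{2}^{R})$ and $R\in\{I,J,K\}$ we have
$$
g_{M}(\nabla_{X}Y,V)=-g_{M}(Y,\nabla_{X}V)=0,
$$
because $\nabla_{X}V\in\Gamma(\mathcal{D}_{2}^{R})\subseteq\Gamma(\ker F_{*})$ while $Y$ is horizontal. Hence the equivalence (\ref{eq17}) from the proof of Theorem~\ref{thm2} forces the identity
$$
g_{N}(F_{*}C_{R}Y,\nabla_{X}^{F}F_{*}RV)=\lambda^{2}g_{M}\bigl(\mathcal{A}_{X}B_{R}Y+g_{M}(C_{R}Y,X)\nabla(\ln\lambda)-C_{R}Y(\ln\lambda)X,\,RV\bigr)
$$
to hold for all such $X,Y,V$ and every $R$. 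Since $g_{N}$ is symmetric, subtracting $\lambda^{2}g_{M}(\mathcal{A}_{X}B_{R}Y,RV)$ shows that condition (b) (resp.\ (c), (d)) is exactly equivalent to
$$
g_{M}\bigl(g_{M}(C_{R}Y,X)\nabla(\ln\lambda)-C_{R}Y(\ln\lambda)X,\,RV\bigr)=0
$$
for all $X,Y\in\Gamma((\ker F_{*})^{\perp})$ and $V\in\Gamma(\mathcal{D}_{2}^{R})$, taken with $R=I$ (resp.\ $J$, $K$).

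The implication (a)$\Rightarrow$(b),(c),(d) is then immediate: if $F$ is horizontally homothetic, the horizontal part of $\nabla(\ln\lambda)$ vanishes, and since $C_{R}Y\in\Gamma(\mu^{R})$ and $RV\in\Gamma(R\mathcal{D}_{2}^{R})$ are both horizontal, the two scalars $C_{R}Y(\ln\lambda)=g_{M}(\nabla(\ln\lambda),C_{R}Y)$ and $g_{M}(\nabla(\ln\lambda),RV)$ vanish, killing the displayed expression. For the converse I would feed in the same two substitutions used at the end of the previous theorem. Putting $X=RV$ and using $g_{M}(C_{R}Y,RV)=0$ from (\ref{eqn92}) gives $C_{R}Y(\ln\lambda)\,g_{M}(RV,RV)=0$, hence $g_{M}(\nabla(\ln\lambda),C_{R}Y)=0$; letting $Y$ run over $\mu^{R}$, where $C_{R}Y=RY$, this says $g_{M}(\nabla(\ln\lambda),Z)=0$ for all $Z\in\Gamma(\mu^{R})$. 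Putting $Y=C_{R}X$ with $X\in\Gamma(\mu^{R})$, so that $C_{R}Y=C_{R}^{2}X=-X$ and $g_{M}(X,RV)=0$, reduces the displayed expression to $g_{M}(X,X)\,g_{M}(\nabla(\ln\lambda),RV)=0$, hence $g_{M}(\nabla(\ln\lambda),RV)=0$ for all $V\in\Gamma(\mathcal{D}_{2}^{R})$. By the splitting $(\ker F_{*})^{\perp}=R\mathcal{D}_{2}^{R}\oplus\mu^{R}$ of (\ref{eqn91}) these two facts give $\mathcal{H}(\nabla(\ln\lambda))=0$, i.e.\ $F$ is horizontally homothetic; this proves (b)$\Rightarrow$(a), (c)$\Rightarrow$(a) and (d)$\Rightarrow$(a).

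There is no genuine obstacle here, since the analytic content is already contained in Theorem~\ref{thm2}. The only points needing attention are the clean vanishing of the $g_{M}(\nabla_{X}Y,V)$ term (which is precisely what the parallelism hypothesis provides) and careful bookkeeping of which orthogonal components of $\nabla(\ln\lambda)$ are extracted by the substitutions $X=RV$ and $Y=C_{R}X$; the relations $C_{R}Y=RY$ and $C_{R}^{2}X=-X$ for $X,Y\in\Gamma(\mu^{R})$, the identity $g_{M}(C_{R}Y,RV)=0$, and the decomposition (\ref{eqn91}) handle this.
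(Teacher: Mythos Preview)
Your proposal is correct and follows essentially the same approach as the paper: both use the identity~(\ref{eq18}) from the proof of Theorem~\ref{thm2}, invoke the parallelism hypothesis to kill $g_{M}(\nabla_{X}Y,V)$, and then recover horizontal homotheticity via two substitutions that isolate the $\mu^{R}$- and $R\mathcal{D}_{2}^{R}$-components of $\nabla(\ln\lambda)$. The only cosmetic difference is that the paper uses $X=C_{R}Y$ for the second substitution while you use $Y=C_{R}X$ with $X\in\Gamma(\mu^{R})$; these yield the same conclusion.
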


\begin{proof}
Given $X,Y\in \Gamma((\ker F_*)^{\perp})$, $V\in
\Gamma(\mathcal{D}_2^R)$, and $R\in \{ I,J,K \}$, by the proof of
Theorem \ref{thm2},  we have
\begin{eqnarray}
  g_M (\nabla_X Y, V)
  &=& g_M (\mathcal{A}_X B_R Y + g_M (C_R Y, X)\nabla(\ln \lambda)  \label{eq18}   \\
  &&-C_R Y(\ln \lambda)X, RV) - \frac{1}{\lambda^2}g_N (F_* C_R Y, \nabla_X^F F_* RV). \nonumber
\end{eqnarray}
Since $g_M (\nabla_X Y, V) = -g_M (Y, \nabla_X V) = 0$, from
(\ref{eq18}), we get $(a) \Rightarrow (b)$, $(a) \Rightarrow (c)$,
$(a) \Rightarrow (d)$.

Conversely, from (\ref{eq18}), we obtain
\begin{equation}\label{eq19}
-g_M (C_R Y, \nabla(\ln \lambda)) g_M (X, RV) + g_M (X, C_R Y) g_M
(\nabla(\ln \lambda), RV) = 0.
\end{equation}
Applying $X = RV$ at (\ref{eq19}), we have
$$
-g_M (C_R Y, \nabla(\ln \lambda)) g_M (RV, RV) = 0,
$$
which implies
\begin{equation}\label{eq20}
g_M (X, \nabla(\lambda)) = 0 \quad \text{for} \ X\in \Gamma(\mu^R).
\end{equation}
Applying $X = C_R Y$ at (\ref{eq19}), we get
$$
g_M (C_R Y, C_R Y) g_M (\nabla(\ln \lambda), RV) = 0,
$$
which implies
\begin{equation}\label{eq21}
g_M (\nabla(\lambda), RV) = 0 \quad \text{for} \ V\in
\Gamma(\mathcal{D}_2^R).
\end{equation}
Using (\ref{eq20}) and (\ref{eq21}), we obtain $(b) \Rightarrow
(a)$, $(c) \Rightarrow (a)$, $(d) \Rightarrow (a)$.

Therefore, the result follows.
\end{proof}

\begin{lemma}
Let $F$ be an almost h-conformal anti-holomorphic semi-invariant
submersion from a hyperk\"{a}hler manifold $(M,I,J,K,g_M)$ onto a
Riemannian manifold $(N, g_N)$ such that $(I,J,K)$ is an almost
h-conformal anti-holomorphic semi-invariant basis. Then the
following conditions are equivalent:

(a) the distribution $(\ker F_*)^{\perp}$ defines a totally geodesic
foliation on $M$.

(b) the distribution $\mathcal{D}_2^I$ is parallel along $(\ker
F_*)^{\perp}$.

(c) the distribution $\mathcal{D}_2^K$ is parallel along $(\ker
F_*)^{\perp}$.
\end{lemma}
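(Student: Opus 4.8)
The plan is to characterize the totally geodesic foliation condition for $(\ker F_*)^\perp$ purely in terms of the vanishing of vertical components of $\nabla_X Y$, and then recognize those components as the parallelism condition for $\mathcal{D}_2^I$ (resp. $\mathcal{D}_2^K$). Since $\{I,J,K\}$ is an almost h-conformal anti-holomorphic semi-invariant basis, the key structural simplification is that $B_R = R$ and $C_R = 0$ on $(\ker F_*)^\perp$, while $\omega_R = R$ on $\mathcal{D}_2^R$, for $R\in\{I,K\}$, and $\mathcal{D}_2^J = \{0\}$ so $J$ preserves both distributions. Hence $\ker F_* = \mathcal{D}_1^I \oplus \mathcal{D}_2^I = \mathcal{D}_1^K \oplus \mathcal{D}_2^K$, and to show $(\ker F_*)^\perp$ is totally geodesic it suffices to check $g_M(\nabla_X Y, W) = 0$ for $W$ ranging over $\mathcal{D}_1^I$ and $\mathcal{D}_2^I$ (equivalently over $\mathcal{D}_1^K$ and $\mathcal{D}_2^K$) and $X,Y\in\Gamma((\ker F_*)^\perp)$.

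First I would specialize Theorem \ref{thm2}(b) to this setting. With $C_I Y = 0$ the second condition in Theorem \ref{thm2}(b) degenerates: the left side $g_N(\nabla_X^F F_* IV, F_* C_I V)$ vanishes identically, and on the right side the terms $C_I Y(\ln\lambda) X$ and $g_M(X, C_I Y)\nabla(\ln\lambda)$ vanish, leaving only $\lambda^2 g_M(\mathcal{A}_X B_I Y, IV) = 0$, i.e. $g_M(\mathcal{A}_X IY, IV) = 0$ for all $V\in\Gamma(\mathcal{D}_2^I)$. Meanwhile the first condition $\mathcal{A}_X C_I Y + \mathcal{V}\nabla_X B_I Y \in \Gamma(\mathcal{D}_2^I)$ becomes $\mathcal{V}\nabla_X IY \in \Gamma(\mathcal{D}_2^I)$. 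So Theorem \ref{thm2} already tells us that $(\ker F_*)^\perp$ is totally geodesic if and only if, for all $X,Y\in\Gamma((\ker F_*)^\perp)$ and $V\in\Gamma(\mathcal{D}_2^I)$, we have $\mathcal{V}\nabla_X IY \in \Gamma(\mathcal{D}_2^I)$ and $g_M(\mathcal{A}_X IY, IV) = 0$.

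Next I would translate the parallelism of $\mathcal{D}_2^I$ into the same pair of conditions. Suppose $\mathcal{D}_2^I$ is parallel along $(\ker F_*)^\perp$, i.e. $\nabla_X V\in\Gamma(\mathcal{D}_2^I)$ for $X\in\Gamma((\ker F_*)^\perp)$, $V\in\Gamma(\mathcal{D}_2^I)$. Since $R((\ker F_*)^\perp) = (\ker F_*)^\perp$ holds for $R = I$ (Remark following the anti-holomorphic definition applies to $I$ as $I\mathcal{D}_2^I = (\ker F_*)^\perp$ forces $I(\ker F_*) \supset \mathcal{D}_2^I$... more carefully: $I^2\mathcal{D}_2^I = \mathcal{D}_2^I$ and $I\mathcal{D}_2^I = (\ker F_*)^\perp$ give $I(\ker F_*)^\perp = \mathcal{D}_2^I$), every $Y\in\Gamma((\ker F_*)^\perp)$ is of the form $IV$ for some $V\in\Gamma(\mathcal{D}_2^I)$, up to the $\mathcal{D}_1^I$-complement; so $\mathcal{V}\nabla_X Y = \mathcal{V}\nabla_X IV$, and I would use $\nabla I = 0$ (hyperkähler) to write $\nabla_X IV = I\nabla_X V$, which by parallelism lies in $I\mathcal{D}_2^I = (\ker F_*)^\perp$, so its vertical part is zero — in particular it lies in $\Gamma(\mathcal{D}_2^I)$ vacuously, and the $\mathcal{A}$-term $g_M(\mathcal{A}_X IY, IV) = -g_M(IY, \mathcal{A}_X IV) = -g_M(IY, \mathcal{H}\nabla_X IV) = -g_M(IY, I\mathcal{H}\nabla_X V)$; since $\nabla_X V$ is vertical, $\mathcal{H}\nabla_X V = 0$, so this vanishes. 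Conversely, if both Theorem \ref{thm2}(b) conditions hold, then $\mathcal{H}\nabla_X V$ and $\mathcal{V}\nabla_X V$ must each be shown to lie in $\mathcal{D}_2^I$ — here I would run the argument backwards: writing any vertical or horizontal test field as $I$ applied to something in $\mathcal{D}_2^I$ or $\mathcal{D}_1^I\oplus\mathcal{D}_2^I$, use $\nabla I = 0$ and anti-symmetry of $\mathcal{A}$ to convert the two displayed conditions into $g_M(\nabla_X V, W) = 0$ for all $W$ outside $\mathcal{D}_2^I$. The identical argument with $I$ replaced by $K$ gives $(a)\Leftrightarrow(c)$, since $B_K = K$, $C_K = 0$ on $(\ker F_*)^\perp$ and $\ker F_* = \mathcal{D}_1^K\oplus\mathcal{D}_2^K$ as well.

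The main obstacle I anticipate is bookkeeping the decomposition of $(\ker F_*)^\perp$ carefully: when I write "every $Y\in\Gamma((\ker F_*)^\perp)$ equals $IV$," I must account for the fact that $I(\ker F_*)^\perp = \mathcal{D}_2^I$ is inside $\ker F_*$, so in fact $Y = IV$ with $V\in\Gamma(\mathcal{D}_2^I)$ is an isomorphism $(\ker F_*)^\perp \to \mathcal{D}_2^I$, and I should consistently use this rather than a direct-sum splitting. The second delicate point is ensuring that the two conditions of Theorem \ref{thm2}(b) together are \emph{equivalent} to parallelism, not merely implied by it: this requires showing that controlling $\langle\nabla_X V, \cdot\rangle$ against $\mathcal{D}_1^I$ and against $\mu^I$ separately (via the $\phi$-component condition and the $\mathcal{A}$-component condition respectively) exhausts the orthogonal complement of $\mathcal{D}_2^I$ in $TM$. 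Given $TM = \mathcal{D}_1^I \oplus \mathcal{D}_2^I \oplus I\mathcal{D}_2^I \oplus \mu^I$ and that $\nabla_X V$ already pairs to zero with $\mathcal{D}_2^I$ itself via metric compatibility is false in general — so I must be sure the "parallel" conclusion $\nabla_X V\in\mathcal{D}_2^I$ genuinely follows and is not off by a $\mathcal{D}_2^I$-internal ambiguity; checking this component count is the crux of the proof.
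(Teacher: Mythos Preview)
Your approach is the paper's: specialize Theorem~\ref{thm2} using $C_R=0$, $B_R=R$ on $(\ker F_*)^\perp$ for $R\in\{I,K\}$, reducing condition (b) there to $\mathcal{V}\nabla_X RY\in\Gamma(\mathcal{D}_2^R)$ together with $g_M(\mathcal{A}_X RY,RV)=0$ for all $V\in\Gamma(\mathcal{D}_2^R)$.

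Where you diverge is in the translation to parallelism, and this is exactly where your anticipated ``obstacle'' lives. The paper avoids it in one stroke: since $\mu^R=\{0\}$ in the anti-holomorphic case, $R\mathcal{D}_2^R$ is \emph{all} of $(\ker F_*)^\perp$, so the condition $g_M(\mathcal{A}_X RY,RV)=0$ for every $V\in\Gamma(\mathcal{D}_2^R)$ says precisely that the horizontal vector $\mathcal{A}_X RY=\mathcal{H}\nabla_X RY$ is orthogonal to the entire horizontal space, hence zero. Pairing this with $\mathcal{V}\nabla_X RY\in\Gamma(\mathcal{D}_2^R)$ gives $\nabla_X RY\in\Gamma(\mathcal{D}_2^R)$ directly. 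Because $Y\mapsto RY$ is a bijection $(\ker F_*)^\perp\to\mathcal{D}_2^R$, this \emph{is} the parallelism condition $\nabla_X V\in\Gamma(\mathcal{D}_2^R)$ for $V\in\Gamma(\mathcal{D}_2^R)$, with no need to invoke $\nabla I=0$, run separate forward/backward arguments, or worry about a $\mathcal{D}_2^R$-internal ambiguity. Your detour through $\nabla_X IV = I\nabla_X V$ also momentarily conflates $\nabla_X Y$ with $\nabla_X IY$; the paper's route sidesteps that bookkeeping entirely.
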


\begin{proof}
We see that $B_R = R$ and $C_R = 0$ on $(\ker F_*)^{\perp}$ for
$R\in \{ I,K \}$.

Given $X,Y\in \Gamma((\ker F_*)^{\perp})$ and $V\in
\Gamma(\mathcal{D}_2^R)$, from Theorem \ref{thm2},  we have
\begin{align*}
  (a)
  &\Leftrightarrow \mathcal{V}\nabla_X RY\in \Gamma(\mathcal{D}_2^R) \ \text{and} \ g_M (\mathcal{A}_X RY, RV) = 0  \\
  &\Leftrightarrow \nabla_X RY\in \Gamma(\mathcal{D}_2^R).
\end{align*}
Hence, we get $(a) \Leftrightarrow (b)$, $(a) \Leftrightarrow (c)$.

Therefore, we obtain the result.
\end{proof}

\begin{theorem}\label{thm3}
Let $F$ be an almost h-conformal semi-invariant submersion from a
hyperk\"{a}hler manifold $(M,I,J,K,g_M)$ onto a Riemannian manifold
$(N, g_N)$ such that $(I,J,K)$ is an almost h-conformal
semi-invariant basis. Then the following conditions are equivalent:

(a) the distribution $\ker F_*$ defines a totally geodesic foliation
on $M$.

(b) $\mathcal{T}_V \omega_I U + \widehat{\nabla}_V \phi_I U\in
\Gamma(\mathcal{D}_1^I)$ and
$$
g_N (\nabla_{\omega_I V}^F F_* X, F_* \omega_I U) = \lambda^2g_M
(C_I \mathcal{T}_U \phi_I V + \mathcal{A}_{\omega_I V} \phi_I U +
g_M (\omega_I V, \omega_I U)\nabla(\ln \lambda), X)
$$
for $U,V\in \Gamma(\ker F_*)$ and $X\in \Gamma(\mu^I)$.

(c) $\mathcal{T}_V \omega_J U + \widehat{\nabla}_V \phi_J U\in
\Gamma(\mathcal{D}_1^J)$ and
$$
g_N (\nabla_{\omega_J V}^F F_* X, F_* \omega_J U) = \lambda^2g_M
(C_J \mathcal{T}_U \phi_J V + \mathcal{A}_{\omega_J V} \phi_J U +
g_M (\omega_J V, \omega_J U)\nabla(\ln \lambda), X)
$$
for $U,V\in \Gamma(\ker F_*)$ and $X\in \Gamma(\mu^J)$.

(d) $\mathcal{T}_V \omega_K U + \widehat{\nabla}_V \phi_K U\in
\Gamma(\mathcal{D}_1^K)$ and
$$
g_N (\nabla_{\omega_K V}^F F_* X, F_* \omega_K U) = \lambda^2g_M
(C_K \mathcal{T}_U \phi_K V + \mathcal{A}_{\omega_K V} \phi_K U +
g_M (\omega_K V, \omega_K U)\nabla(\ln \lambda), X)
$$
for $U,V\in \Gamma(\ker F_*)$ and $X\in \Gamma(\mu^K)$.
\end{theorem}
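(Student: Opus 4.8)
\textbf{Proof proposal for Theorem \ref{thm3}.}

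The plan is to follow the same template used in the proofs of Theorem \ref{int1} and Theorem \ref{thm2}: characterize the totally geodesic condition for $\ker F_*$ by testing $g_M(\nabla_U V, X)$ against a horizontal vector $X$, and split $X$ according to the decomposition $(\ker F_*)^{\perp} = R\mathcal{D}_2^R \oplus \mu^R$ of \eqref{eqn91}. Concretely, $\ker F_*$ defines a totally geodesic foliation if and only if $g_M(\nabla_U V, X) = 0$ for all $U,V\in \Gamma(\ker F_*)$ and all $X\in \Gamma((\ker F_*)^{\perp})$, and by \eqref{eqn91} it suffices to test this separately for $X = \omega_R U'$-type vectors in $R\mathcal{D}_2^R$ and for $X\in \Gamma(\mu^R)$. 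This produces exactly the two conditions listed in (b), (c), (d), one for each $R\in\{I,J,K\}$.

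First I would handle the $\mu^R$ component. Write $RU = \phi_R U + \omega_R U$ using \eqref{eq8}, so that $U = -R\phi_R U - R\omega_R U$ and hence $\nabla_V U = -\nabla_V(R\phi_R U) - \nabla_V(R\omega_R U)$ since $M$ is hyperk\"ahler (so $\nabla R = 0$). Then $g_M(\nabla_V U, X) = g_M(\nabla_V(R\phi_R U), RX) + g_M(\nabla_V(R\omega_R U), RX)$ for $X\in\Gamma(\mu^R)$, since $R$ is an isometry. Now $\phi_R U\in\Gamma(\mathcal{D}_1^R)\subset\ker F_*$, so $\nabla_V\phi_R U = \mathcal{T}_V\phi_R U + \widehat{\nabla}_V\phi_R U$, and similarly $\omega_R U\in\Gamma(R\mathcal{D}_2^R)\subset(\ker F_*)^{\perp}$, so $\nabla_V\omega_R U = \mathcal{T}_V\omega_R U + \mathcal{H}\nabla_V\omega_R U$ when $V$ is vertical. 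Decomposing $RX$ via \eqref{eq9} as $RX = B_R X + C_R X$ with $B_R X\in\Gamma(\mathcal{D}_2^R)$ and $C_R X\in\Gamma(\mu^R)$, and using the skew-symmetry relations \eqref{eqn22}, \eqref{eqn23}, one converts terms like $g_M(\mathcal{A}_{\omega_R V}\phi_R U, C_R X)$ and $g_M(\mathcal{T}_U\phi_R V, \cdot)$ into the expression appearing on the right of (b). The conformal factor enters through the horizontal part: terms of the form $g_M(\mathcal{H}\nabla_{\omega_R V}\omega_R U, C_R X)$ get rewritten using \eqref{eqn77} and the pullback connection, producing $\frac{1}{\lambda^2}g_N(\nabla_{\omega_R V}^F F_* X, F_*\omega_R U)$ together with the $g_M(\omega_R V,\omega_R U)\nabla(\ln\lambda)$ correction term. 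This yields the stated equivalence for the $\mu^R$-part.

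Next I would handle the $R\mathcal{D}_2^R$ component: for $X' \in \Gamma(\mathcal D_2^R)$, test $g_M(\nabla_U V, RX')$. Using $\nabla R = 0$ we get $g_M(\nabla_U V, RX') = -g_M(\nabla_U RV, X') = -g_M(\nabla_U(\phi_R V + \omega_R V), X')$, and since $X'\in\Gamma(\mathcal{D}_2^R)\subset\ker F_*$ this reduces (via the definition $\widehat{\nabla} = \mathcal{V}\nabla$ and \eqref{eqn22}) to a statement purely about $\mathcal{T}_V\omega_R U + \widehat{\nabla}_V\phi_R U$ lying in $\Gamma(\mathcal{D}_1^R)$, which is the first displayed condition in (b)–(d). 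Combining the two parts gives $(a)\Leftrightarrow(b)$, and the arguments for $R = J$ and $R = K$ are verbatim the same, giving $(a)\Leftrightarrow(c)$ and $(a)\Leftrightarrow(d)$.

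The main obstacle I anticipate is purely bookkeeping: keeping straight which pieces of $\nabla_U V$ are vertical versus horizontal as $U$ ranges over all of $\ker F_*$ (not just $\mathcal{D}_1^R$ or $\mathcal{D}_2^R$), and correctly tracking the sign and placement of the $\ln\lambda$ terms when passing through \eqref{eqn77}. There is no conceptual difficulty — the hyperk\"ahler hypothesis $\nabla R = 0$ removes all $\nabla R$ correction terms, and every step is an application of \eqref{eq8}, \eqref{eq9}, the O'Neill skew-symmetry \eqref{eqn22}–\eqref{eqn23}, and Lemma \ref{lem1} — but the algebra must be organized carefully so that the two orthogonal projections ($\mu^R$ and $R\mathcal{D}_2^R$) are separated cleanly, exactly as was done in Theorem \ref{thm2}.
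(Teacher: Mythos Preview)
Your plan is correct and matches the paper's proof: split the horizontal test vector via \eqref{eqn91} into the $R\mathcal{D}_2^R$ and $\mu^R$ pieces and treat each separately for $R\in\{I,J,K\}$, exactly as in Theorem~\ref{thm2}. Two small points to tighten: first, for $X\in\Gamma(\mu^R)$ the decomposition $RX = B_R X + C_R X$ is trivial because $\mu^R$ is $R$-invariant (so $B_R X = 0$, $C_R X = RX$), and the $C_R$ that appears in the statement acts on $\mathcal{T}_U\phi_R V$, not on $X$; second, the one tool you leave off your list is the bracket relation \eqref{eqn5}, which the paper uses in the $\mu^R$ step to pass from $g_M(\nabla_U\omega_R V, RX)$ to $g_M(\nabla_{\omega_R V} U, RX)$ --- this swap is precisely what produces the $\mathcal{A}_{\omega_R V}\phi_R U$ and $\nabla_{\omega_R V}^F F_*X$ terms with $\omega_R V$ in the subscript, and without it your computation of $\nabla_V\omega_R U$ does not directly yield those expressions.
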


\begin{proof}
Given $U,V\in \Gamma(\ker F_*)$, $W\in \Gamma(\mathcal{D}_2^R)$, and
$R\in \{ I,J,K \}$, by using (\ref{eqn92}),  we have
$$
g_M (\nabla_V U, RW) = -g_M (\omega_R(\widehat{\nabla}_V \phi_R U +
\mathcal{T}_V \omega_R U), RW)
$$
so that
\begin{equation}\label{eq22}
g_M (\nabla_V U, RW) = 0 \Leftrightarrow \widehat{\nabla}_V \phi_R U
+ \mathcal{T}_V \omega_R U\in \Gamma(\mathcal{D}_1^R)
\end{equation}
Given $X\in \Gamma(\mu^R)$, by using (\ref{eqn5}) and (\ref{eqn91}),
we get
\begin{align*}
  &g_M (\nabla_U V, X)   \\
  &= g_M (\nabla_U \phi_R V, RX) + g_M (\phi_R U, \nabla_{\omega_R V} X) + g_M (\omega_R U, \nabla_{\omega_R V} X)  \\
  &= g_M (\mathcal{T}_U \phi_R V, RX) + g_M (\phi_R U, \mathcal{A}_{\omega_R V} X)    \\
  &- \frac{1}{\lambda^2}g_M (\nabla(\ln \lambda), X) g_N (F_* \omega_R V, F_* \omega_R U)
  + \frac{1}{\lambda^2}g_N (\nabla_{\omega_R V}^F F_* X, F_* \omega_R U)     \\
  &= g_M (-C_R \mathcal{T}_U \phi_R V - \mathcal{A}_{\omega_R V} \phi_R U - g_M (\omega_R V, \omega_R U)\nabla(\ln \lambda), X)  \\
  &+ \frac{1}{\lambda^2}g_N (\nabla_{\omega_R V}^F F_* X, F_* \omega_R U)
\end{align*}
so that
\begin{eqnarray}
  &&g_M (\nabla_U V, X) = 0  \label{eq23}  \\
  &&\Leftrightarrow g_N (\nabla_{\omega_R V}^F F_* X, F_* \omega_R U) \nonumber  \\
  &&= \lambda^2 g_M (C_R \mathcal{T}_U \phi_R V + \mathcal{A}_{\omega_R V} \phi_R U
+ g_M (\omega_R V, \omega_R U)\nabla(\ln \lambda), X). \nonumber
\end{eqnarray}
Using (\ref{eq22}) and (\ref{eq23}), we obtain $(a) \Leftrightarrow
(b)$, $(a) \Leftrightarrow (c)$, $(a) \Leftrightarrow (d)$.

Therefore, the result follows.
\end{proof}

\begin{definition}
Let $F$ be an almost h-conformal semi-invariant submersion from a
hyperk\"{a}hler manifold $(M,I,J,K,g_M)$ onto a Riemannian manifold
$(N, g_N)$ such that $(I,J,K)$ is an almost h-conformal
semi-invariant basis. Then given $R\in \{ I,J,K \}$, we call the
distribution $\mu^R$ {\em parallel along $\ker F_*$} if $\nabla_U
X\in \Gamma(\mu^R)$ for $X\in \Gamma(\mu^R)$ and $U\in \Gamma(\ker
F_*)$.
\end{definition}

\begin{lemma}
Let $F$ be an almost h-conformal semi-invariant submersion from a
hyperk\"{a}hler manifold $(M,I,J,K,g_M)$ onto a Riemannian manifold
$(N, g_N)$ such that $(I,J,K)$ is an almost h-conformal
semi-invariant basis. Assume that the distribution $\mu^R$ is
parallel along $\ker F_*$ for any $R\in \{ I,J,K \}$.

Then given $R\in \{ I,J,K \}$, the following conditions are
equivalent:

(a) dilation $\lambda$ is constant on $\mu^R$.

(b)
$$
g_N (\nabla_{\omega_R V}^F F_* X, F_* \omega_R U) = \lambda^2g_M
(C_R \mathcal{T}_U \phi_R V + \mathcal{A}_{\omega_R V} \phi_R U, X)
$$
for $X\in \Gamma(\mu^R)$ and $U,V\in \Gamma(\ker F_*)$.
\end{lemma}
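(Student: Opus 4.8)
The plan is to invoke Theorem \ref{thm3} directly: the hypothesis that each $\mu^R$ is parallel along $\ker F_*$ forces the term $g_M(\omega_R V,\omega_R U)\nabla(\ln\lambda)$ to drop out when tested against $X\in\Gamma(\mu^R)$, so condition (b) of Theorem \ref{thm3} — for the fixed $R$ — reduces to exactly the identity stated in condition (b) here, provided dilation $\lambda$ is constant on $\mu^R$. Concretely, in equation (\ref{eq23}) the extra summand is $\lambda^2 g_M(g_M(\omega_R V,\omega_R U)\nabla(\ln\lambda),X) = \lambda^2 g_M(\omega_R V,\omega_R U)\,g_M(\nabla(\ln\lambda),X)$, and $g_M(\nabla(\ln\lambda),X)=X(\ln\lambda)$ vanishes for all $X\in\Gamma(\mu^R)$ precisely when $\lambda$ is constant on $\mu^R$. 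This gives $(a)\Rightarrow(b)$ immediately.

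For the converse $(b)\Rightarrow(a)$, I would start from (\ref{eq23}) rewritten as
\begin{equation*}
\lambda^2 g_M(\omega_R V,\omega_R U)\,X(\ln\lambda)
= g_N(\nabla_{\omega_R V}^F F_* X, F_*\omega_R U)
- \lambda^2 g_M(C_R\mathcal{T}_U\phi_R V + \mathcal{A}_{\omega_R V}\phi_R U, X),
\end{equation*}
so that (b) says the left-hand side vanishes for all $X\in\Gamma(\mu^R)$ and all $U,V\in\Gamma(\ker F_*)$. The key point is that $\omega_R$ maps $\Gamma(\mathcal{D}_2^R)$ onto $\Gamma(R\mathcal{D}_2^R)$ and is, up to the obvious sign, an isometry there; in particular we may choose $U=V$ with $\omega_R V\neq 0$ at any point where $\mathcal{D}_2^R\neq\{0\}$, giving $\lambda^2\, g_M(\omega_R V,\omega_R V)\,X(\ln\lambda)=0$, hence $X(\ln\lambda)=0$ for every $X\in\Gamma(\mu^R)$, which is precisely (a). (If $\mathcal{D}_2^R=\{0\}$ on $U$, then $\mu^R=(\ker F_*)^\perp$ and there is nothing to deduce beyond what the parallelism hypothesis already gives together with equation (\ref{eqn77}); this degenerate subcase should be noted.)

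I expect the only genuine subtlety to be the degenerate-fiber/zero-distribution bookkeeping: one must make sure that on the open set $U$ where the basis $\{I,J,K\}$ is defined, $\mathcal{D}_2^R$ is either generically nonzero — in which case the isometry argument for $(b)\Rightarrow(a)$ applies on a dense open set and then by continuity everywhere — or identically zero, in which case (a) and (b) are both treated as vacuous/automatic. Everything else is a direct transcription of the displayed formulas (\ref{eq23}) and (\ref{eqn77}) under the parallelism hypothesis, together with the observation $g_M(\nabla(\ln\lambda),X)=X(\ln\lambda)$ and $\nabla(\lambda)=\lambda\,\nabla(\ln\lambda)$, so no new computation beyond what appears in the proof of Theorem \ref{thm3} is required.
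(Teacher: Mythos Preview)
Your approach is essentially the paper's, but you have misidentified where the parallelism hypothesis enters, and this leaves a real gap. The assumption that $\mu^R$ is parallel along $\ker F_*$ does \emph{not} kill the term $g_M(\omega_R V,\omega_R U)\,\nabla(\ln\lambda)$; rather, it kills the left-hand side of the computation preceding (\ref{eq23}). Concretely, from the proof of Theorem~\ref{thm3} one has
\[
g_M(\nabla_U V, X)
= -\,g_M\bigl(C_R\mathcal{T}_U\phi_R V + \mathcal{A}_{\omega_R V}\phi_R U + g_M(\omega_R V,\omega_R U)\nabla(\ln\lambda),\,X\bigr)
+ \tfrac{1}{\lambda^2}\,g_N(\nabla_{\omega_R V}^F F_* X,\,F_*\omega_R U),
\]
and the parallelism of $\mu^R$ along $\ker F_*$ gives $g_M(\nabla_U V, X) = -g_M(V,\nabla_U X) = 0$ because $\nabla_U X\in\Gamma(\mu^R)\subset(\ker F_*)^\perp$ and $V\in\Gamma(\ker F_*)$. \emph{That} is the step which makes your rewritten identity in the $(b)\Rightarrow(a)$ paragraph legitimate; without it, the equation you wrote is not yet established. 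Once this is in place, condition (a) is equivalent to the vanishing of $g_M(\omega_R V,\omega_R U)\,X(\ln\lambda)$ for all choices, and your $U=V$ argument for the converse goes through exactly as you describe. The paper's proof is just these two lines; your discussion of the degenerate case $\mathcal{D}_2^R=\{0\}$ is an extra nicety not mentioned there.
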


\begin{proof}
Given $X\in \Gamma(\mu^R)$ and $U,V\in \Gamma(\ker F_*)$, by using
the proof of Theorem \ref{thm3} and (\ref{eqn92}), we have
\begin{align*}
  &g_M (\nabla_U V, X)   \\
  &= g_M (-C_R \mathcal{T}_U \phi_R V - \mathcal{A}_{\omega_R V} \phi_R U - g_M (\omega_R V, \omega_R U)\nabla(\ln \lambda), X)  \\
  &+ \frac{1}{\lambda^2}g_N (\nabla_{\omega_R V}^F F_* X, F_* \omega_R U)
\end{align*}
so that since $g_M (\nabla_U V, X) = -g_M (V, \nabla_U X) = 0$, it
is easy to get $(a) \Leftrightarrow (b)$.
\end{proof}

Denote by $M_{\ker F_*}$ and $M_{(\ker F_*)^{\perp}}$ the integral
manifolds of the distributions $\ker F_*$ and $(\ker F_*)^{\perp}$,
respectively.

Using Theorem \ref{thm2} and Theorem \ref{thm3}, we have

\begin{theorem}
Let $F$ be an almost h-conformal semi-invariant submersion from a
hyperk\"{a}hler manifold $(M,I,J,K,g_M)$ onto a Riemannian manifold
$(N, g_N)$ such that $(I,J,K)$ is an almost h-conformal
semi-invariant basis. Then the following conditions are equivalent:

(a) $M$ is locally a product Riemannian manifold $M_{\ker F_*}
\times M_{(\ker F_*)^{\perp}}$.

(b) $\mathcal{A}_X C_I Y + \mathcal{V}\nabla_X B_I Y\in
\Gamma(\mathcal{D}_2^I)$,

$g_N (\nabla_X^F F_* IV, F_* C_I V) = \lambda^2 g_M (\mathcal{A}_X
B_I Y - C_I Y(\ln \lambda)X + g_M (X, C_I Y) \nabla(\ln \lambda),
IV)$

for $X,Y\in \Gamma((\ker F_*)^{\perp})$, $V\in
\Gamma(\mathcal{D}_2^I)$.

$\mathcal{T}_V \omega_I U + \widehat{\nabla}_V \phi_I U\in
\Gamma(\mathcal{D}_1^I)$,

$g_N (\nabla_{\omega_I V}^F F_* X, F_* \omega_I U) = \lambda^2g_M
(C_I \mathcal{T}_U \phi_I V + \mathcal{A}_{\omega_I V} \phi_I U +
g_M (\omega_I V, \omega_I U)\nabla(\ln \lambda), X)$

for $U,V\in \Gamma(\ker F_*)$, $X\in \Gamma(\mu^I)$.

(c) $\mathcal{A}_X C_J Y + \mathcal{V}\nabla_X B_J Y\in
\Gamma(\mathcal{D}_2^J)$,

$g_N (\nabla_X^F F_* JV, F_* C_J V) = \lambda^2 g_M (\mathcal{A}_X
B_J Y - C_J Y(\ln \lambda)X + g_M (X, C_J Y) \nabla(\ln \lambda),
JV)$

for $X,Y\in \Gamma((\ker F_*)^{\perp})$ and $V\in
\Gamma(\mathcal{D}_2^J)$.

$\mathcal{T}_V \omega_J U + \widehat{\nabla}_V \phi_J U\in
\Gamma(\mathcal{D}_1^J)$,

$g_N (\nabla_{\omega_J V}^F F_* X, F_* \omega_J U) = \lambda^2g_M
(C_J \mathcal{T}_U \phi_J V + \mathcal{A}_{\omega_J V} \phi_J U +
g_M (\omega_J V, \omega_J U)\nabla(\ln \lambda), X)$

for $U,V\in \Gamma(\ker F_*)$ and $X\in \Gamma(\mu^J)$.

(d) $\mathcal{A}_X C_K Y + \mathcal{V}\nabla_X B_K Y\in
\Gamma(\mathcal{D}_2^K)$,

$g_N (\nabla_X^F F_* KV, F_* C_K V) = \lambda^2 g_M (\mathcal{A}_X
B_K Y - C_K Y(\ln \lambda)X + g_M (X, C_K Y) \nabla(\ln \lambda),
KV)$

for $X,Y\in \Gamma((\ker F_*)^{\perp})$ and $V\in
\Gamma(\mathcal{D}_2^K)$.

$\mathcal{T}_V \omega_K U + \widehat{\nabla}_V \phi_K U\in
\Gamma(\mathcal{D}_1^K)$,

$g_N (\nabla_{\omega_K V}^F F_* X, F_* \omega_K U) = \lambda^2g_M
(C_K \mathcal{T}_U \phi_K V + \mathcal{A}_{\omega_K V} \phi_K U +
g_M (\omega_K V, \omega_K U)\nabla(\ln \lambda), X)$

for $U,V\in \Gamma(\ker F_*)$ and $X\in \Gamma(\mu^K)$.
\end{theorem}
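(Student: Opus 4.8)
The plan is to reduce the statement to the two foliation theorems already established, namely Theorem \ref{thm2} and Theorem \ref{thm3}. Recall the classical fact that, for a Riemannian manifold $M$ carrying two orthogonal complementary distributions $\mathcal{D}$ and $\mathcal{D}^{\perp}$ with $TM = \mathcal{D} \oplus \mathcal{D}^{\perp}$, the manifold $M$ is locally isometric to a product of integral manifolds $M_{\mathcal{D}} \times M_{\mathcal{D}^{\perp}}$ if and only if each of $\mathcal{D}$ and $\mathcal{D}^{\perp}$ defines a totally geodesic foliation on $M$. In our setting $\mathcal{D} = \ker F_*$ and $\mathcal{D}^{\perp} = (\ker F_*)^{\perp}$; the vertical distribution $\ker F_*$ is automatically integrable since its integral manifolds are the fibers $F^{-1}(y)$, and the totally geodesic hypothesis on $(\ker F_*)^{\perp}$ forces its integrability as well. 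Hence it suffices to show that $M$ is locally a product Riemannian manifold $M_{\ker F_*} \times M_{(\ker F_*)^{\perp}}$ if and only if both $(\ker F_*)^{\perp}$ and $\ker F_*$ define totally geodesic foliations on $M$.

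First I would invoke Theorem \ref{thm2}: for a fixed $R \in \{I,J,K\}$, the distribution $(\ker F_*)^{\perp}$ defines a totally geodesic foliation on $M$ if and only if $\mathcal{A}_X C_R Y + \mathcal{V}\nabla_X B_R Y \in \Gamma(\mathcal{D}_2^R)$ and the displayed conformal identity involving $g_N(\nabla_X^F F_* RV, F_* C_R V)$ holds for all $X,Y \in \Gamma((\ker F_*)^{\perp})$ and $V \in \Gamma(\mathcal{D}_2^R)$. These are precisely the first two conditions appearing in each of (b), (c), (d), for $R = I, J, K$ respectively.

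Next I would invoke Theorem \ref{thm3}: for a fixed $R \in \{I,J,K\}$, the distribution $\ker F_*$ defines a totally geodesic foliation on $M$ if and only if $\mathcal{T}_V \omega_R U + \widehat{\nabla}_V \phi_R U \in \Gamma(\mathcal{D}_1^R)$ and the displayed identity $g_N(\nabla_{\omega_R V}^F F_* X, F_* \omega_R U) = \lambda^2 g_M(C_R \mathcal{T}_U \phi_R V + \mathcal{A}_{\omega_R V}\phi_R U + g_M(\omega_R V, \omega_R U)\nabla(\ln\lambda), X)$ holds for all $U,V \in \Gamma(\ker F_*)$ and $X \in \Gamma(\mu^R)$. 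These are exactly the last two conditions in each of (b), (c), (d).

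Conjoining the two characterizations for a single value of $R$ then gives $(a) \Leftrightarrow (b)$ on taking $R = I$, $(a) \Leftrightarrow (c)$ on taking $R = J$, and $(a) \Leftrightarrow (d)$ on taking $R = K$, which is the assertion. The only genuinely non-mechanical ingredient is the local product decomposition criterion; once that is granted, the rest is bookkeeping that matches term by term the conclusions of Theorems \ref{thm2} and \ref{thm3}. I do not expect a serious obstacle: the one point to keep in mind is that the word ``each'' in the definition of an almost h-conformal semi-invariant basis allows us to work with one structure $R$ at a time, which is exactly what makes the three statements (b), (c), (d) each individually equivalent to (a).
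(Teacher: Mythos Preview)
Your proposal is correct and follows exactly the paper's own approach: the paper states this theorem immediately after Theorems \ref{thm2} and \ref{thm3} with the single sentence ``Using Theorem \ref{thm2} and Theorem \ref{thm3}, we have,'' relying implicitly on the same local product decomposition criterion you invoke. Your write-up is in fact more detailed than what the paper provides.
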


\begin{theorem}\label{thm4}
Let $F$ be an h-conformal semi-invariant submersion from a
hyperk\"{a}hler manifold $(M,I,J,K,g_M)$ onto a Riemannian manifold
$(N, g_N)$ such that $(I,J,K)$ is an h-conformal semi-invariant
basis. Then the following conditions are equivalent:

(a) the distribution $\mathcal{D}_1$ defines a totally geodesic
foliation on $M$.

(b)
\begin{align*}
  &(\nabla F_*)(V,IW)\in \Gamma(F_* \mu^I),   \\
  &\displaystyle{g_N ((\nabla F_*)(V,IW), F_* C_I X) = \lambda^2g_M (W,
\mathcal{T}_V \omega_I B_I X)}
\end{align*}
for $V,W\in \Gamma(\mathcal{D}_1)$ and $X\in \Gamma((\ker
F_*)^{\perp})$.

(c)
\begin{align*}
  &(\nabla F_*)(V,JW)\in \Gamma(F_* \mu^J),  \\
  &\displaystyle{g_N ((\nabla F_*)(V,JW), F_* C_J X) = \lambda^2g_M (W,
\mathcal{T}_V \omega_J B_J X)}
\end{align*}
for $V,W\in \Gamma(\mathcal{D}_1)$ and $X\in \Gamma((\ker
F_*)^{\perp})$.

(d)
\begin{align*}
  &(\nabla F_*)(V,KW)\in \Gamma(F_* \mu^K),   \\
  &\displaystyle{g_N ((\nabla F_*)(V,KW), F_* C_K X) = \lambda^2g_M (W,
\mathcal{T}_V \omega_K B_K X)}
\end{align*}
for $V,W\in \Gamma(\mathcal{D}_1)$ and $X\in \Gamma((\ker
F_*)^{\perp})$.
\end{theorem}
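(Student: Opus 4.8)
The plan is to detect when the leaves of $\mathcal{D}_1$ are totally geodesic by testing the vector $\nabla_V W$, for $V,W\in\Gamma(\mathcal{D}_1)$, against the orthogonal complement of $\mathcal{D}_1$ in $TM$. Since $TM=\mathcal{D}_1\oplus\mathcal{D}_2\oplus R\mathcal{D}_2\oplus\mu^R$ orthogonally, and ``$\mathcal{D}_1$ defines a totally geodesic foliation on $M$'' means exactly $\nabla_V W\in\Gamma(\mathcal{D}_1)$ whenever $V,W\in\Gamma(\mathcal{D}_1)$, it suffices to decide (i) when $g_M(\nabla_V W,Z)=0$ for all $Z\in\Gamma(\mathcal{D}_2)$ and (ii) when $g_M(\nabla_V W,X)=0$ for all $X\in\Gamma((\ker F_*)^{\perp})$. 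I expect (i) to turn into the clause $(\nabla F_*)(V,RW)\in\Gamma(F_*\mu^R)$ and (ii) into the displayed integral identity, so that $(a)\Leftrightarrow(b)$, $(a)\Leftrightarrow(c)$, $(a)\Leftrightarrow(d)$ all follow by taking $R=I,J,K$ in turn.

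The one algebraic device used throughout is to move $R$ off $W$. Fix $R\in\{I,J,K\}$. Since $W\in\Gamma(\mathcal{D}_1)$ we have $\omega_R W=0$, hence $RW=\phi_R W\in\Gamma(\mathcal{D}_1)$ and $R\phi_R W=R^2 W=-W$, i.e. $W=-R\phi_R W$. As $M$ is hyperk\"{a}hler we have $\nabla R=0$, and combined with (\ref{hypermet}) this gives, for every $\xi\in\Gamma(TM)$, the identity $g_M(\nabla_V W,\xi)=-g_M(R\nabla_V\phi_R W,\xi)=g_M(\nabla_V\phi_R W,R\xi)$. I would then split $\nabla_V\phi_R W=\widehat{\nabla}_V\phi_R W+\mathcal{T}_V\phi_R W$ into its vertical and horizontal parts through (\ref{eq3})--(\ref{eq4}), with $\widehat{\nabla}_V\phi_R W$ vertical and $\mathcal{T}_V\phi_R W$ horizontal.

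For $\xi=Z\in\Gamma(\mathcal{D}_2)$ one has $RZ=\omega_R Z$, which is horizontal, so only $\mathcal{T}_V\phi_R W$ contributes and, by (\ref{eqn78}) together with the conformality relation (\ref{eq1}), $g_M(\nabla_V W,Z)=g_M(\mathcal{T}_V\phi_R W,\omega_R Z)=-\frac{1}{\lambda^2}g_N((\nabla F_*)(V,RW),F_*\omega_R Z)$. Because $F_*$ carries the orthogonal decomposition $(\ker F_*)^{\perp}=R\mathcal{D}_2\oplus\mu^R$ to an orthogonal decomposition of $TN$, this vanishes for every $Z\in\Gamma(\mathcal{D}_2)$ precisely when $(\nabla F_*)(V,RW)\in\Gamma(F_*\mu^R)$. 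For $\xi=X\in\Gamma((\ker F_*)^{\perp})$, decompose $RX=B_R X+C_R X$ as in (\ref{eq9}). The horizontal part again produces $g_M(\mathcal{T}_V\phi_R W,C_R X)=-\frac{1}{\lambda^2}g_N((\nabla F_*)(V,RW),F_*C_R X)$, while the vertical part gives $g_M(\widehat{\nabla}_V\phi_R W,B_R X)=-g_M(\phi_R W,\nabla_V B_R X)$ (using $\mathcal{D}_1\perp\mathcal{D}_2$); using $\phi_R B_R X=0$, hence $B_R X=-R\omega_R B_R X$, then $\nabla R=0$ and $R\phi_R W=-W$, this term becomes $g_M(W,\nabla_V\omega_R B_R X)=g_M(W,\mathcal{T}_V\omega_R B_R X)$. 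Thus $g_M(\nabla_V W,X)=g_M(W,\mathcal{T}_V\omega_R B_R X)-\frac{1}{\lambda^2}g_N((\nabla F_*)(V,RW),F_*C_R X)$, which vanishes for all $X\in\Gamma((\ker F_*)^{\perp})$ iff $\lambda^2 g_M(W,\mathcal{T}_V\omega_R B_R X)=g_N((\nabla F_*)(V,RW),F_*C_R X)$. Combining the two cases yields $(a)\Leftrightarrow(b)$, and the same computation with $R=J,K$ gives $(a)\Leftrightarrow(c)$ and $(a)\Leftrightarrow(d)$.

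I expect the main obstacle to be bookkeeping rather than anything conceptual: one must track which of the four orthogonal summands $\mathcal{D}_1,\mathcal{D}_2,R\mathcal{D}_2,\mu^R$ each vector occupies after applying $R$ or $\nabla$, using $\phi_R B_R=0$, the $R$-invariance of $\mu^R$, and (\ref{eqn92}) to discard cross terms, and one must keep in mind that both the vertical/horizontal orthogonality and the orthogonality of $R\mathcal{D}_2$ and $\mu^R$ survive under $F_*$ only by virtue of horizontal conformality. Beyond (\ref{eqn78}), the O'Neill decomposition (\ref{eq3})--(\ref{eq4}), and the single identity $\nabla R=0$, no further ingredient seems necessary.
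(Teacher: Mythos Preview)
Your argument is correct and follows essentially the same route as the paper's own proof: test $\nabla_V W$ against $\mathcal{D}_2$ and $(\ker F_*)^{\perp}$ separately, move $R$ off the $\mathcal{D}_1$-vector via $\nabla R=0$, and convert the resulting horizontal inner products into $g_N$-pairings through (\ref{eqn78}) and horizontal conformality. The only cosmetic difference is that the paper writes $RU$ directly (since $U\in\Gamma(\mathcal{D}_1)$ gives $RU=\phi_R U$) and reaches the identity $g_M(\nabla_V U,X)=g_M(U,\mathcal{T}_V\omega_R B_R X)-\lambda^{-2}g_N((\nabla F_*)(V,RU),F_*C_R X)$ in one line, whereas you unfold the same computation through the intermediate step $B_R X=-R\omega_R B_R X$; the content is identical.
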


\begin{proof}
Given $U,V\in \Gamma(\mathcal{D}_1)$, $W\in \Gamma(\mathcal{D}_2)$,
and $R\in \{ I,J,K \}$, we get
\begin{align*}
  g_M (\nabla_V U, W)
  &= g_M (\mathcal{H}\nabla_V RU, RW)  \\
  &= -\frac{1}{\lambda^2}g_N ((\nabla F_*)(V,RU), F_* RW)
\end{align*}
so that
\begin{equation}\label{eq24}
g_M (\nabla_V U, W) = 0 \Leftrightarrow (\nabla F_*)(V,RU)\in
\Gamma(F_* \mu^R).
\end{equation}
Given $X\in \Gamma((\ker F_*)^{\perp})$, we obtain
\begin{align*}
  g_M (\nabla_V U, X)
  &= g_M (U, \nabla_V RB_R X) + g_M (\mathcal{H}\nabla_V RU, C_R X)  \\
  &= g_M (U, \mathcal{T}_V \omega_R B_R X) -\frac{1}{\lambda^2}g_N ((\nabla F_*)(V,RU), F_* C_R X)
\end{align*}
so that
\begin{equation}\label{eq25}
g_M (\nabla_V U, X) = 0 \Leftrightarrow g_N ((\nabla F_*)(V,RU), F_*
C_R X) = \lambda^2g_M (U, \mathcal{T}_V \omega_R B_R X).
\end{equation}
Using (\ref{eq24}) and (\ref{eq25}), we have $(a) \Leftrightarrow
(b)$, $(a) \Leftrightarrow (c)$, $(a) \Leftrightarrow (d)$.

Therefore, we obtain the result.
\end{proof}

\begin{theorem}\label{thm5}
Let $F$ be an h-conformal semi-invariant submersion from a
hyperk\"{a}hler manifold $(M,I,J,K,g_M)$ onto a Riemannian manifold
$(N, g_N)$ such that $(I,J,K)$ is an h-conformal semi-invariant
basis. Then the following conditions are equivalent:

(a) the distribution $\mathcal{D}_2$ defines a totally geodesic
foliation on $M$.

(b) $(\nabla F_*)(V,IW)\in \Gamma(F_* \mu^I)$,
\begin{align*}
  \displaystyle{-\frac{1}{\lambda^2}g_N (\nabla_{IV}^F F_* IU, F_* IC_I X)}
  &= g_M (V, B_I \mathcal{T}_U B_I X)  \\
  &+g_M (U, V) g_M (\mathcal{H}\nabla(\ln \lambda), IC_I X)
\end{align*}
for $U,V\in \Gamma(\mathcal{D}_2)$, $W\in \Gamma(\mathcal{D}_1)$,
and $X\in \Gamma((\ker F_*)^{\perp})$.

(c) $(\nabla F_*)(V,JW)\in \Gamma(F_* \mu^J)$,
\begin{align*}
  \displaystyle{-\frac{1}{\lambda^2}g_N (\nabla_{JV}^F F_* JU, F_* JC_J X)}
  &= g_M (V, B_J \mathcal{T}_U B_J X)  \\
  &+g_M (U, V) g_M (\mathcal{H}\nabla(\ln \lambda), JC_J X)
\end{align*}
for $U,V\in \Gamma(\mathcal{D}_2)$, $W\in \Gamma(\mathcal{D}_1)$,
and $X\in \Gamma((\ker F_*)^{\perp})$.

(d) $(\nabla F_*)(V,KW)\in \Gamma(F_* \mu^K)$,
\begin{align*}
  \displaystyle{-\frac{1}{\lambda^2}g_N (\nabla_{KV}^F F_* KU, F_* KC_K X)}
  &= g_M (V, B_K \mathcal{T}_U B_K X)  \\
  &+g_M (U, V) g_M (\mathcal{H}\nabla(\ln \lambda), KC_K X)
\end{align*}
for $U,V\in \Gamma(\mathcal{D}_2)$, $W\in \Gamma(\mathcal{D}_1)$,
and $X\in \Gamma((\ker F_*)^{\perp})$.
\end{theorem}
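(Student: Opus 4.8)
The plan is to mimic exactly the structure of the proof of Theorem \ref{thm4}: the distribution $\mathcal{D}_2$ defines a totally geodesic foliation on $M$ if and only if $g_M(\nabla_U V, Z) = 0$ for all $U,V\in \Gamma(\mathcal{D}_2)$ and all $Z$ in the complementary distribution $\mathcal{D}_1 \oplus (\ker F_*)^{\perp}$. So I would split the computation into two cases according to whether $Z = W\in \Gamma(\mathcal{D}_1)$ or $Z = X\in \Gamma((\ker F_*)^{\perp})$, fix $R\in\{I,J,K\}$ throughout, and at the end observe that the three characterizations (b), (c), (d) are the specializations to $R=I,J,K$.

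For the $\mathcal{D}_1$-direction, I would compute $g_M(\nabla_U V, W)$ for $W\in\Gamma(\mathcal{D}_1)$. Since $R$ preserves $\mathcal{D}_1$ and $R$ is an isometry with $\nabla R = 0$ (hyperkähler), I can write $g_M(\nabla_U V, W) = g_M(R\nabla_U V, RW) = g_M(\nabla_U RV, RW)$; now $RV\in\Gamma(R\mathcal{D}_2)\subset\Gamma((\ker F_*)^\perp)$ while $RW\in\Gamma(\mathcal{D}_1)$, so this equals $g_M(\mathcal{H}\nabla_U RV, RW)$ up to the usual O'Neill bookkeeping, and then use \eqref{eqn79}/\eqref{eqn77} to rewrite it through $(\nabla F_*)$ and $F_*$. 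The upshot should be precisely the first condition $(\nabla F_*)(V, RW)\in\Gamma(F_*\mu^R)$ — exactly as in \eqref{eq24} of Theorem \ref{thm4}, by symmetry of $\nabla F_*$. For the $(\ker F_*)^\perp$-direction, I would compute $g_M(\nabla_U V, X)$ for $X\in\Gamma((\ker F_*)^\perp)$. Write $RX = B_R X + C_R X$ with $B_R X\in\Gamma(\mathcal{D}_2^R)$ and $C_R X\in\Gamma(\mu^R)$; applying $R^2=-\mathrm{id}$ and $R$-parallelism, $g_M(\nabla_U V, X) = -g_M(R\nabla_U V, RX) = -g_M(\nabla_U RV, B_R X + C_R X)$. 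The $B_R X$ term lies in $\mathcal{D}_2$, contributing a $\mathcal{T}$-term ($g_M(\mathcal{T}_U RV, B_R X)$-type, which I would push further using Lemma \ref{lemm1} and \eqref{eqn22} to land on $g_M(V, B_R\mathcal{T}_U B_R X)$); the $C_R X$ term lies in the horizontal space, so I convert $g_M(\nabla_U RV, C_R X)$ to $F_*$-language via \eqref{eqn77}–\eqref{eqn79}, picking up the $g_M(U,V)\,g_M(\mathcal{H}\nabla(\ln\lambda), C_R X)$-type conformal correction and the $\nabla^F_{RV} F_* (\cdot)$ pullback term. Assembling these gives the displayed identity in (b)–(d); replace $C_RX$ by $IC_IX$ etc. by noting $R C_R X = -\omega_R B_R X + \ldots$ — more precisely, one uses $g_M(\nabla_U RV, C_R X) = -g_M(\nabla_U V, R C_R X)$ and the decomposition of $R C_R X$, landing on the stated $F_* RC_R X = F_*(IC_I X)$ form.

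Finally I would combine the two equivalences: $\mathcal{D}_2$ is totally geodesic $\iff$ both $g_M(\nabla_U V, W)=0$ for all $W\in\Gamma(\mathcal{D}_1)$ and $g_M(\nabla_U V, X)=0$ for all $X\in\Gamma((\ker F_*)^\perp)$, which by the two computations is exactly the conjunction in each of (b),(c),(d); since the argument is valid for each $R\in\{I,J,K\}$ independently, $(a)\Leftrightarrow(b)$, $(a)\Leftrightarrow(c)$, $(a)\Leftrightarrow(d)$. The main obstacle I anticipate is purely bookkeeping: correctly tracking which pieces of $R(\nabla_U RV)$ and $R C_R X$ land in $\mathcal{D}_1$, $\mathcal{D}_2$, $R\mathcal{D}_2$, or $\mu^R$, and applying \eqref{eqn91}–\eqref{eqn92} together with Lemma \ref{lemm1}(1) and (3) in the right order so that the $\mathcal{T}$-terms collapse to the single term $B_R\mathcal{T}_U B_R X$ and the conformal-dilation term appears with the correct coefficient; there is no conceptual difficulty beyond that.
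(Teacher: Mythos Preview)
Your overall strategy coincides with the paper's: split $g_M(\nabla_U V,\cdot)$ against $\mathcal{D}_1$ and $(\ker F_*)^{\perp}$, treat each piece with the hyperk\"ahler identities and Lemma~\ref{lem1}, and then specialize to $R\in\{I,J,K\}$. The $\mathcal{D}_1$-part goes through essentially as you describe (one slip: since $RW$ is vertical you pick up $\mathcal{T}_U RV$, not $\mathcal{H}\nabla_U RV$; after the skew-symmetry \eqref{eqn22} and \eqref{eqn78} this becomes $\frac{1}{\lambda^2}g_N((\nabla F_*)(U,RW),F_*RV)$, which is exactly the paper's \eqref{eq26}).

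For the $(\ker F_*)^{\perp}$-part, however, your sketch glosses over the one real maneuver, and your proposed route at the end does not reach the stated form. You cannot apply \eqref{eqn77} directly to $g_M(\nabla_U RV, C_R X)$, because $U$ is vertical. What the paper does is: first use \eqref{eqn5} (i.e.\ $[U,RV]\in\Gamma(\ker F_*)$) to replace $g_M(\nabla_U RV, C_R X)$ by $g_M(\nabla_{RV} U, C_R X)$, and then apply $R$ a \emph{second} time to write this as $g_M(\nabla_{RV} RU, RC_R X)$. Now $RV,RU\in\Gamma(R\mathcal{D}_2)\subset\Gamma((\ker F_*)^{\perp})$ are both horizontal, so \eqref{eqn77} applies and yields precisely the term $\frac{1}{\lambda^2}g_N(\nabla^F_{RV}F_*RU,F_*RC_RX)$ together with the single surviving dilation contribution $g_M(U,V)\,g_M(\mathcal{H}\nabla(\ln\lambda),RC_RX)$; the other two dilation terms drop out by \eqref{eqn91}. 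Your alternative suggestion $g_M(\nabla_U RV,C_RX)=-g_M(\nabla_U V,RC_RX)$ simply feeds back into a $\mathcal{T}_U V$-type expression and never produces the pullback-connection term $\nabla^F_{RV}F_*RU$. (Also, the sign in your line ``$g_M(\nabla_U V,X)=-g_M(R\nabla_U V,RX)$'' is wrong: $R$ is an isometry, so no sign appears there.) Once you insert this commutator-plus-second-$R$ step, the rest of your outline matches the paper's proof verbatim.
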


\begin{proof}
Given $U,V\in \Gamma(\mathcal{D}_2)$, $W\in \Gamma(\mathcal{D}_1)$,
$R\in \{ I,J,K \}$, we get
$$
g_M (\nabla_U V, W) = \frac{1}{\lambda^2}g_N ((\nabla F_*)(U,RW),
F_* RV)
$$
so that
\begin{equation}\label{eq26}
g_M (\nabla_U V, W) = 0 \Leftrightarrow (\nabla F_*)(U,RW)\in
\Gamma(F_* \mu^R).
\end{equation}
Given $X\in \Gamma((\ker F_*)^{\perp})$, by using (\ref{eqn5}),
(\ref{eqn77}), (\ref{eqn92}), we obtain
\begin{align*}
  g_M (\nabla_U V, X)
  &= -g_M (RV, \mathcal{T}_U B_R X) + g_M (\nabla_{RV} U, C_R X)  \\
  &= -g_M (RV, \mathcal{T}_U B_R X) + g_M (\nabla_{RV} RU, RC_R X)  \\
  &= g_M (V, B_R\mathcal{T}_U B_R X) + g_M (U, V) g_M (\mathcal{H}\nabla(\ln \lambda), RC_R X)  \\
  &+\frac{1}{\lambda^2}g_N (\nabla_{RV}^F F_* RU, F_* RC_R X)
\end{align*}
so that
\begin{eqnarray}
  &&g_M (\nabla_U V, X) = 0  \label{eq27}  \\
  &&\Leftrightarrow -\frac{1}{\lambda^2}g_N (\nabla_{RV}^F F_* RU, F_* RC_R X) \nonumber  \\
  &&= g_M (V, B_R\mathcal{T}_U B_R X) + g_M (U, V) g_M (\mathcal{H}\nabla(\ln \lambda), RC_R X). \nonumber
\end{eqnarray}
Using (\ref{eq26}) and (\ref{eq27}), we have $(a) \Leftrightarrow
(b)$, $(a) \Leftrightarrow (c)$, $(a) \Leftrightarrow (d)$.

Therefore, the result follows.
\end{proof}

Using Theorem \ref{thm4} and Theorem \ref{thm5}, we obtain

\begin{theorem}
Let $F$ be an h-conformal semi-invariant submersion from a
hyperk\"{a}hler manifold $(M,I,J,K,g_M)$ onto a Riemannian manifold
$(N, g_N)$ such that $(I,J,K)$ is an h-conformal semi-invariant
basis. Then the following conditions are equivalent:

(a) the fibers of $F$ are locally product Riemannian manifolds
$M_{\mathcal{D}_1} \times M_{\mathcal{D}_2}$.

(b)
\begin{align*}
  &(\nabla F_*)(V,IW)\in \Gamma(F_* \mu^I),   \\
  &\displaystyle{g_N ((\nabla F_*)(V,IW), F_* C_I X) = \lambda^2g_M (W,
\mathcal{T}_V \omega_I B_I X)}
\end{align*}
for $V,W\in \Gamma(\mathcal{D}_1)$ and $X\in \Gamma((\ker
F_*)^{\perp})$.

$(\nabla F_*)(V,IW)\in \Gamma(F_* \mu^I)$,
\begin{align*}
  \displaystyle{-\frac{1}{\lambda^2}g_N (\nabla_{IV}^F F_* IU, F_* IC_I X)}
  &= g_M (V, B_I \mathcal{T}_U B_I X)  \\
  &+g_M (U, V) g_M (\mathcal{H}\nabla(\ln \lambda), IC_I X)
\end{align*}
for $U,V\in \Gamma(\mathcal{D}_2)$, $W\in \Gamma(\mathcal{D}_1)$,
and $X\in \Gamma((\ker F_*)^{\perp})$.

(c)
\begin{align*}
  &(\nabla F_*)(V,JW)\in \Gamma(F_* \mu^J),  \\
  &\displaystyle{g_N ((\nabla F_*)(V,JW), F_* C_J X) = \lambda^2g_M (W,
\mathcal{T}_V \omega_J B_J X)}
\end{align*}
for $V,W\in \Gamma(\mathcal{D}_1)$ and $X\in \Gamma((\ker
F_*)^{\perp})$.

$(\nabla F_*)(V,JW)\in \Gamma(F_* \mu^J)$,
\begin{align*}
  \displaystyle{-\frac{1}{\lambda^2}g_N (\nabla_{JV}^F F_* JU, F_* JC_J X)}
  &= g_M (V, B_J \mathcal{T}_U B_J X)  \\
  &+g_M (U, V) g_M (\mathcal{H}\nabla(\ln \lambda), JC_J X)
\end{align*}
for $U,V\in \Gamma(\mathcal{D}_2)$, $W\in \Gamma(\mathcal{D}_1)$,
and $X\in \Gamma((\ker F_*)^{\perp})$.

(d)
\begin{align*}
  &(\nabla F_*)(V,KW)\in \Gamma(F_* \mu^K),   \\
  &\displaystyle{g_N ((\nabla F_*)(V,KW), F_* C_K X) = \lambda^2g_M (W,
\mathcal{T}_V \omega_K B_K X)}
\end{align*}
for $V,W\in \Gamma(\mathcal{D}_1)$ and $X\in \Gamma((\ker
F_*)^{\perp})$.

$(\nabla F_*)(V,KW)\in \Gamma(F_* \mu^K)$,
\begin{align*}
  \displaystyle{-\frac{1}{\lambda^2}g_N (\nabla_{KV}^F F_* KU, F_* KC_K X)}
  &= g_M (V, B_K \mathcal{T}_U B_K X)  \\
  &+g_M (U, V) g_M (\mathcal{H}\nabla(\ln \lambda), KC_K X)
\end{align*}
for $U,V\in \Gamma(\mathcal{D}_2)$, $W\in \Gamma(\mathcal{D}_1)$,
and $X\in \Gamma((\ker F_*)^{\perp})$.
\end{theorem}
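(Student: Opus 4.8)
The plan is to reduce the statement to the conjunction of Theorem \ref{thm4} and Theorem \ref{thm5}. First I would invoke the local de Rham-type decomposition criterion: since $\ker F_* = \mathcal{D}_1 \oplus \mathcal{D}_2$ is an orthogonal decomposition and each fiber $F^{-1}(y)$ is an integral manifold of $\ker F_*$ carrying the induced metric, the fiber is locally isometric to a Riemannian product $M_{\mathcal{D}_1} \times M_{\mathcal{D}_2}$ precisely when both $\mathcal{D}_1$ and $\mathcal{D}_2$ define totally geodesic foliations. Here one uses that $[\mathcal{D}_i, \mathcal{D}_i] \subseteq \ker F_*$, so the leaves of $\mathcal{D}_i$ lie inside the fibers, and being $\nabla$-autoparallel is exactly what realizes the fiber as the product of the integral manifolds $M_{\mathcal{D}_1}$ and $M_{\mathcal{D}_2}$. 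Thus condition (a) translates into the statement that $\mathcal{D}_1$ and $\mathcal{D}_2$ each define a totally geodesic foliation on $M$.

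Next I would observe that, after this reduction, condition (b) of the present theorem is nothing but the conjunction of condition (b) of Theorem \ref{thm4} and condition (b) of Theorem \ref{thm5}; likewise (c) here is (c) of Theorem \ref{thm4} together with (c) of Theorem \ref{thm5}, and (d) here is (d) paired with (d). By Theorem \ref{thm4}, each of its conditions (b), (c), (d) is equivalent to ``$\mathcal{D}_1$ defines a totally geodesic foliation on $M$''; by Theorem \ref{thm5}, each of its conditions (b), (c), (d) is equivalent to ``$\mathcal{D}_2$ defines a totally geodesic foliation on $M$''. Chaining these equivalences with the reduction of the previous paragraph yields $(a) \Leftrightarrow (b) \Leftrightarrow (c) \Leftrightarrow (d)$, which is the assertion. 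No fresh computation is required, since the displays in (b)--(d) of the present theorem already coincide verbatim, for each fixed $R \in \{ I,J,K \}$, with those of Theorems \ref{thm4} and \ref{thm5}.

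The only genuinely delicate point is the first step: correctly pinning down the sense in which ``the fibers are locally product Riemannian manifolds'' matches the two autoparallelism conditions on $\ker F_*$ extracted by Theorems \ref{thm4} and \ref{thm5} (in particular noting that for sub-distributions of $\ker F_*$ the relevant notion of ``totally geodesic foliation'' restricts well between $M$ and the fibers). Everything after that is bookkeeping, namely keeping track of which relation corresponds to which $R$; so it suffices to cite the two earlier theorems together with the decomposition criterion.
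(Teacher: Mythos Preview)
Your proposal is correct and matches the paper's own proof, which consists of the single line ``Using Theorem \ref{thm4} and Theorem \ref{thm5}, we obtain'' before the statement. You have simply made explicit the underlying local product (de Rham) criterion that the paper leaves implicit, and correctly flagged the one point requiring care, namely that the ``totally geodesic foliation on $M$'' conditions of Theorems \ref{thm4} and \ref{thm5} are what is meant by the fibers being locally a product $M_{\mathcal{D}_1}\times M_{\mathcal{D}_2}$.
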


We know

\begin{lemma}\cite{BW}\label{lem3}
Let $F$ be a horizontally conformal submersion from a Riemannian
manifold $(M,g_M)$ onto a Riemannian manifold $(N,g_N)$ with
dilation $\lambda$.

Then the tension field $\tau(F)$ of $F$ is given by
\begin{equation}\label{eq28}
\tau(F) = -mF_* H + (2-n)F_* (\nabla(\ln \lambda)),
\end{equation}
where $H$ is the mean curvature vector field of the distribution
$\ker F_*$, $m = \dim \ker F_*$, $n = \dim N$.
\end{lemma}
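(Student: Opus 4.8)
The plan is to compute the tension field $\tau(F) = \operatorname{trace}(\nabla F_*)$ directly from its definition by summing the second fundamental form over a suitably chosen local orthonormal frame of $TM$. The natural choice is a frame adapted to the decomposition $TM = \ker F_* \oplus (\ker F_*)^{\perp}$: take $\{V_1,\dots,V_m\}$ a local orthonormal frame of $\ker F_*$ and $\{X_1,\dots,X_n\}$ a local orthonormal frame of $(\ker F_*)^{\perp}$, where $m = \dim\ker F_*$ and $n = \dim N$ (recalling that a horizontally conformal submersion has $\operatorname{rank} F_* = n$ everywhere). Then $\tau(F) = \sum_{i=1}^m (\nabla F_*)(V_i,V_i) + \sum_{j=1}^n (\nabla F_*)(X_j,X_j)$.

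The key computational inputs are the three formulas from Lemma \ref{lem1} (the one with equations (\ref{eqn77}), (\ref{eqn78}), (\ref{eqn79})). First, by (\ref{eqn78}), $\sum_{i=1}^m (\nabla F_*)(V_i,V_i) = -F_*\bigl(\sum_{i=1}^m \mathcal{T}_{V_i}V_i\bigr) = -F_*(mH)$, since by definition the mean curvature vector field of the fibers is $H = \frac{1}{m}\sum_{i=1}^m \mathcal{T}_{V_i}V_i$ (note $\mathcal{T}_{V_i}V_i$ is horizontal, so this lands where $F_*$ is injective). Second, by (\ref{eqn77}) with $X = Y = X_j$ and $g_M(X_j,X_j)=1$, we get $(\nabla F_*)(X_j,X_j) = 2\,X_j(\ln\lambda)F_*X_j - F_*(\nabla\ln\lambda)$. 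Summing over $j$, the term $\sum_j X_j(\ln\lambda) F_* X_j$ equals $F_*\bigl(\sum_j X_j(\ln\lambda) X_j\bigr)$, which is $F_*$ applied to the horizontal part of $\nabla\ln\lambda$; and since $\nabla\ln\lambda$'s vertical part is killed by $F_*$, this is just $F_*(\nabla\ln\lambda)$. Hence $\sum_{j=1}^n (\nabla F_*)(X_j,X_j) = 2F_*(\nabla\ln\lambda) - nF_*(\nabla\ln\lambda) = (2-n)F_*(\nabla\ln\lambda)$.

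Adding the two pieces gives $\tau(F) = -mF_*H + (2-n)F_*(\nabla\ln\lambda)$, which is exactly (\ref{eq28}). The only point requiring a little care — and the main (minor) obstacle — is the bookkeeping about which vectors are horizontal versus vertical and the fact that $F_*$ annihilates $\ker F_*$: one must check that $\mathcal{T}_{V_i}V_i \in \Gamma((\ker F_*)^{\perp})$ (immediate from (\ref{eq4})) so that writing $F_*(mH)$ is legitimate, and that in $\sum_j X_j(\ln\lambda) F_* X_j$ we may replace $\sum_j X_j(\ln\lambda)X_j$ by $\nabla\ln\lambda$ up to a vertical term that $F_*$ kills. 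Both are routine. One should also note that the result is independent of the choices of orthonormal frames, as $\tau(F)$ is; no quaternionic structure is used, so this is purely a statement about horizontally conformal submersions, and the proof is a direct substitution into Lemma \ref{lem1}.
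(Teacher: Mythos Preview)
Your proof is correct. Note, however, that the paper does not give its own proof of this lemma: it is stated with a citation to \cite{BW} and used as a black box. Your argument is exactly the standard one underlying that citation --- split the trace over an adapted orthonormal frame and apply (\ref{eqn77}) and (\ref{eqn78}) --- so there is nothing to compare; you have simply supplied the omitted proof.
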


Using Lemma \ref{lem3}, we easily get

\begin{corollary}
Let $F$ be an almost h-conformal semi-invariant submersion from a
hyperk\"{a}hler manifold $(M,I,J,K,g_M)$ onto a Riemannian manifold
$(N, g_N)$ such that $(I,J,K)$ is an almost h-conformal
semi-invariant basis. Assume that $F$ is harmonic with $\dim \ker
F_* > 0$ and $\dim N > 2$. Then the following conditions are
equivalent:

(a) all the fibers of $F$ are minimal.

(b) the map $F$ is horizontally homothetic.
\end{corollary}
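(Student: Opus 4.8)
The plan is to apply Lemma~\ref{lem3} essentially verbatim; the almost h-conformal semi-invariant structure plays no role here, only the facts that $F$ is a harmonic horizontally conformal submersion and that $\dim\ker F_*>0$, $\dim N>2$. First I would use harmonicity: since $\tau(F)=0$, Lemma~\ref{lem3} yields
\[
-mF_*H+(2-n)F_*(\nabla(\ln\lambda))=0,
\]
where $m=\dim\ker F_*>0$ and $n=\dim N>2$, and $H$ is the mean curvature vector field of the distribution $\ker F_*$. Because $(F_*)_p$ annihilates $(\ker F_*)_p$, we may replace $\nabla(\ln\lambda)$ by its horizontal part $\mathcal{H}\nabla(\ln\lambda)$; moreover $H$, being the mean curvature vector of the fibers, is a section of $(\ker F_*)^{\perp}$. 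Hence the identity above reads $F_*\bigl(mH-(2-n)\mathcal{H}\nabla(\ln\lambda)\bigr)=0$ as a section of $F^{-1}TN$.

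Next I would use that $F$ is a horizontally conformal submersion with no critical points, so at each $p\in M$ the map $(F_*)_p\colon((\ker F_*)_p^{\perp},(g_M)_p)\to(T_{F(p)}N,(g_N)_{F(p)})$ is a conformal linear isomorphism, in particular injective; since $mH-(2-n)\mathcal{H}\nabla(\ln\lambda)$ is a horizontal vector field, it must vanish, i.e.
\[
mH=(2-n)\,\mathcal{H}\nabla(\ln\lambda)\qquad\text{on }M.
\]
As $m>0$ and, because $n>2$, $2-n\neq0$, this shows that $H=0$ on $M$ if and only if $\mathcal{H}\nabla(\ln\lambda)=0$ on $M$. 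Finally $\nabla(\ln\lambda)=\frac{1}{\lambda}\nabla\lambda$ with $\lambda>0$, so $\mathcal{H}\nabla(\ln\lambda)=0$ is equivalent to $\mathcal{H}\nabla\lambda=0$, which is precisely the condition that $F$ be horizontally homothetic; and $H\equiv0$ is exactly the statement that every fiber of $F$ is minimal. This establishes $(a)\Leftrightarrow(b)$.

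There is no real analytic obstacle in this argument. The only points requiring care are that the relation furnished by Lemma~\ref{lem3} is an equality of sections of the pullback bundle $F^{-1}TN$, so one must first exhibit $mH-(2-n)\mathcal{H}\nabla(\ln\lambda)$ as the image under the fiberwise injective map $(F_*)|_{(\ker F_*)^{\perp}}$ of a horizontal vector field before cancelling, and that the hypotheses $\dim\ker F_*>0$ and $\dim N>2$ are used exactly to divide by $m$ and by $2-n$ respectively.
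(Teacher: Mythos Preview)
Your proposal is correct and follows exactly the approach the paper intends: the paper states only ``Using Lemma~\ref{lem3}, we easily get'' this corollary, and your argument spells out precisely that easy deduction. The observation that the almost h-conformal semi-invariant structure plays no role is accurate; the result holds for any harmonic horizontally conformal submersion with $\dim\ker F_*>0$ and $\dim N>2$.
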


\begin{corollary}
Let $F$ be an almost h-conformal semi-invariant submersion from a
hyperk\"{a}hler manifold $(M,I,J,K,g_M)$ onto a Riemannian manifold
$(N, g_N)$ such that $(I,J,K)$ is an almost h-conformal
semi-invariant basis. Assume that $\dim \ker F_* > 0$ and $\dim N =
2$. Then the following conditions are equivalent:

(a) all the fibers of $F$ are minimal.

(b) the map $F$ is harmonic.
\end{corollary}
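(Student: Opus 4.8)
The plan is to read the equivalence straight off the tension-field formula of Lemma~\ref{lem3}. Putting $n = \dim N = 2$ into (\ref{eq28}), the conformal term $(2-n)F_*(\nabla(\ln\lambda))$ vanishes identically, leaving
\[
\tau(F) = -m\,F_* H,
\]
where $m = \dim\ker F_* > 0$ and $H$ denotes the mean curvature vector field of the distribution $\ker F_*$ (equivalently, of the fibers of $F$).

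Next I would observe that $H$ is horizontal: choosing a local orthonormal frame $\{e_1,\dots,e_m\}$ of $\ker F_*$, we have $H = \frac{1}{m}\sum_{i=1}^{m} \mathcal{T}_{e_i}e_i$, and each $\mathcal{T}_{e_i}e_i = \mathcal{H}\nabla_{e_i}e_i \in \Gamma((\ker F_*)^{\perp})$ by (\ref{eq4}). Since $F$ is a horizontally conformal submersion with no critical points, for each $p\in M$ the differential $(F_*)_p$ restricts to a conformal linear isomorphism of $((\ker F_*)_p)^{\perp}$ onto $T_{F(p)}N$ with dilation $\lambda(p) > 0$; in particular $F_*$ is injective on the horizontal distribution, so that $F_* H = 0$ forces $H = 0$.

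Combining the two steps, and using $m > 0$, we obtain
\[
\tau(F) = 0 \iff F_* H = 0 \iff H = 0 \iff \text{all fibers of } F \text{ are minimal},
\]
which is precisely $(a)\Leftrightarrow(b)$. This is simply the $n=2$ counterpart of the preceding corollary, in which (for $\dim N > 2$) the surviving conformal term instead produces the horizontally homothetic condition. There is no genuine obstacle here; the only points to state carefully are the injectivity of $F_*$ on $(\ker F_*)^{\perp}$ and the horizontality of $H$, both of which are immediate from the definitions involved.
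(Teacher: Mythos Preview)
Your argument is correct and is exactly what the paper intends: both corollaries are stated immediately after Lemma~\ref{lem3} with only the remark ``Using Lemma~\ref{lem3}, we easily get,'' so the intended proof is precisely to substitute $n=2$ into (\ref{eq28}) and use that $F_*$ is injective on the horizontal distribution. Your added justification that $H$ is horizontal and that $F_*|_{(\ker F_*)^\perp}$ is injective simply makes explicit what the paper leaves implicit.
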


We introduce another notion and investigate the condition for such a
map to be totally geodesic.

\begin{definition}
Let $F$ be an almost h-conformal semi-invariant submersion from a
hyperk\"{a}hler manifold $(M,I,J,K,g_M)$ onto a Riemannian manifold
$(N, g_N)$ such that $(I,J,K)$ is an almost h-conformal
semi-invariant basis. Then given $R\in \{ I,J,K \}$, we call the map
$F$ a {\em $(R\mathcal{D}_2^R, \mu^R)$-totally geodesic map} if
$(\nabla F_*)(RU,X) = 0$ for $U\in \Gamma(\mathcal{D}_2^R)$ and
$X\in \Gamma(\mu^R)$.
\end{definition}

\begin{theorem}
Let $F$ be an almost h-conformal semi-invariant submersion from a
hyperk\"{a}hler manifold $(M,I,J,K,g_M)$ onto a Riemannian manifold
$(N, g_N)$ such that $(I,J,K)$ is an almost h-conformal
semi-invariant basis. Then the following conditions are equivalent:

(a) the map $F$ is horizontally homothetic.

(b) the map $F$ is a $(I\mathcal{D}_2^I, \mu^I)$-totally geodesic
map.

(c) the map $F$ is a $(J\mathcal{D}_2^J, \mu^J)$-totally geodesic
map.

(d) the map $F$ is a $(K\mathcal{D}_2^K, \mu^K)$-totally geodesic
map.
\end{theorem}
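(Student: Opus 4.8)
The plan is to exploit formula (\ref{eqn79}), namely $(\nabla F_*)(X,V) = -F_*(\mathcal{A}_X V)$ together with the horizontally conformal structure formula (\ref{eqn77}) for $(\nabla F_*)(X,Y)$ on horizontal vectors, applied to the pair $(RU, X)$ with $U \in \Gamma(\mathcal{D}_2^R)$ and $X \in \Gamma(\mu^R)$; the point is that $RU \in \Gamma((\ker F_*)^{\perp})$, so $(RU,X)$ is a pair of horizontal vectors and (\ref{eqn77}) applies directly. First I would write
\[
(\nabla F_*)(RU,X) = RU(\ln \lambda) F_* X + X(\ln \lambda) F_* RU - g_M(RU, X) F_*(\nabla \ln \lambda),
\]
and observe that $g_M(RU,X) = 0$ since $RU \in \Gamma(R\mathcal{D}_2^R)$ and $X \in \Gamma(\mu^R)$, which are orthogonal by the definition of $\mu^R$ as the orthogonal complement of $R\mathcal{D}_2^R$ in $(\ker F_*)^{\perp}$. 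So the vanishing of $(\nabla F_*)(RU,X)$ for all such $U,X$ is equivalent to
\[
RU(\ln \lambda) F_* X + X(\ln \lambda) F_* RU = 0 \quad \text{for all } U \in \Gamma(\mathcal{D}_2^R),\ X \in \Gamma(\mu^R).
\]

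For the direction $(a) \Rightarrow (b),(c),(d)$: if $F$ is horizontally homothetic then $\nabla \ln \lambda$ is vertical (equivalently, $X(\ln\lambda) = 0$ for all horizontal $X$), so both terms above vanish identically, giving $(\nabla F_*)(RU,X) = 0$ for each $R \in \{I,J,K\}$. For the converse, suppose $F$ is, say, a $(R\mathcal{D}_2^R,\mu^R)$-totally geodesic map. Since $F_*$ restricted to $(\ker F_*)^{\perp}$ is a (conformal) isomorphism onto $TN$ and $RU, X$ span complementary nonzero pieces of the horizontal space, $F_* RU$ and $F_* X$ are linearly independent whenever $RU$ and $X$ are nonzero and independent; applying $g_N(\cdot, F_* X)$ and $g_N(\cdot, F_* RU)$ to the displayed identity and using (\ref{eq1}) to convert $g_N$-inner products of $F_*$-images into $\lambda^2 g_M$-inner products, I get $RU(\ln\lambda)\, g_M(X,X) = 0$ and $X(\ln\lambda)\, g_M(RU,RU) = 0$, hence $RU(\ln\lambda) = 0$ for all $U \in \Gamma(\mathcal{D}_2^R)$ and $X(\ln\lambda) = 0$ for all $X \in \Gamma(\mu^R)$. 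Together these say $X(\ln\lambda) = 0$ for every horizontal vector field $X \in \Gamma(R\mathcal{D}_2^R \oplus \mu^R) = \Gamma((\ker F_*)^{\perp})$ by (\ref{eqn91}), which is exactly the statement that $\nabla\ln\lambda$ has no horizontal component, i.e.\ $F$ is horizontally homothetic. This establishes $(b) \Rightarrow (a)$, and the arguments for $(c) \Rightarrow (a)$ and $(d) \Rightarrow (a)$ are identical with $R = J$ and $R = K$ respectively.

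The main thing to be careful about — the only real subtlety — is the linear-independence/spanning bookkeeping when I extract the two scalar equations from the single vector equation: I must handle the degenerate cases (where $RU$ and $X$ happen to be dependent, or one of them vanishes at a point) by a pointwise argument, choosing local orthonormal frames of $R\mathcal{D}_2^R$ and of $\mu^R$ and testing the identity against frame elements; since $R\mathcal{D}_2^R \perp \mu^R$ this is automatic. One should also note that $(b) \Leftrightarrow (c) \Leftrightarrow (d)$ follows formally once each is shown equivalent to $(a)$, so no separate cross-comparison of the three distributions $\mathcal{D}_2^I, \mathcal{D}_2^J, \mathcal{D}_2^K$ is needed. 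No hyperkähler hypothesis beyond what makes $R \in \{I,J,K\}$ parallel is actually used here — only that each $RU$ is horizontal, which is built into the definition of the almost h-conformal semi-invariant basis — so the proof is short.
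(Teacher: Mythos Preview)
Your proposal is correct and follows essentially the same approach as the paper: apply (\ref{eqn77}) to the horizontal pair $(RU,X)$, use $g_M(RU,X)=0$ to drop the last term, and then read off $RU(\ln\lambda)=X(\ln\lambda)=0$ from the resulting identity (the paper does this by invoking linear independence of $F_*RU$ and $F_*X$, you by pairing against $F_*X$ and $F_*RU$, which is the same thing). Your opening mention of (\ref{eqn79}) is a slight distraction since both vectors are horizontal and only (\ref{eqn77}) is actually used, but the argument itself is fine.
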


\begin{proof}
Given $U\in \Gamma(\mathcal{D}_2^R)$, $X\in \Gamma(\mu^R)$, and
$R\in \{ I,J,K \}$, we have
\begin{eqnarray}
  &&(\nabla F_*)(RU,X)  \label{eq29}   \\
  &&= RU(\ln \lambda)F_* X + X(\ln \lambda)F_* RU - g_M (RU, X)F_* \nabla(\ln \lambda)  \nonumber   \\
  &&= RU(\ln \lambda)F_* X + X(\ln \lambda)F_* RU   \nonumber
\end{eqnarray}
so that we easily get $(a) \Rightarrow (b)$, $(a) \Rightarrow (c)$,
$(a) \Rightarrow (d)$.

Conversely, from (\ref{eq29}), we obtain
$$
RU(\ln \lambda)F_* X + X(\ln \lambda)F_* RU = 0.
$$
Since $\{ F_* X, F_* RU \}$ is linearly independent for nonzero $X$,
$U$, we have $RU(\ln \lambda) = 0$ and $X(\ln \lambda) = 0$, which
means $(a) \Leftarrow (b)$, $(a) \Leftarrow (c)$, $(a) \Leftarrow
(d)$.

Therefore, the result follows.
\end{proof}

\begin{theorem}
Let $F$ be an almost h-conformal semi-invariant submersion from a
hyperk\"{a}hler manifold $(M,I,J,K,g_M)$ onto a Riemannian manifold
$(N, g_N)$ such that $(I,J,K)$ is an almost h-conformal
semi-invariant basis. Then the following conditions are equivalent:

(a) the map $F$ is totally geodesic.

(b) (i) $C_I \mathcal{T}_U IV + \omega_I \widehat{\nabla}_U IV = 0$
for $U,V\in \Gamma(\mathcal{D}_1^I)$.

(ii) $C_I \mathcal{H}\nabla_U IW + \omega_I \mathcal{T}_U IW = 0$
for $U\in \Gamma(\ker F_*)$ and $W\in \Gamma(\mathcal{D}_2^I)$.

(iii) the map $F$ is horizontally homothetic.

(iv) $\mathcal{T}_U B_I X + \mathcal{H}\nabla_U C_I X\in
\Gamma(I\mathcal{D}_2^I)$ and $\widehat{\nabla}_U B_I X +
\mathcal{T}_U C_I X\in \Gamma(\mathcal{D}_1^I)$ for $U\in
\Gamma(\ker F_*)$ and $X\in \Gamma((\ker F_*)^{\perp})$.

(c) (i) $C_J \mathcal{T}_U JV + \omega_J \widehat{\nabla}_U JV = 0$
for $U,V\in \Gamma(\mathcal{D}_1^J)$.

(ii) $C_J \mathcal{H}\nabla_U JW + \omega_J \mathcal{T}_U JW = 0$
for $U\in \Gamma(\ker F_*)$ and $W\in \Gamma(\mathcal{D}_2^J)$.

(iii) the map $F$ is horizontally homothetic.

(iv) $\mathcal{T}_U B_J X + \mathcal{H}\nabla_U C_J X\in
\Gamma(J\mathcal{D}_2^J)$ and $\widehat{\nabla}_U B_J X +
\mathcal{T}_U C_J X\in \Gamma(\mathcal{D}_1^J)$ for $U\in
\Gamma(\ker F_*)$ and $X\in \Gamma((\ker F_*)^{\perp})$.

(d) (i) $C_K \mathcal{T}_U KV + \omega_K \widehat{\nabla}_U KV = 0$
for $U,V\in \Gamma(\mathcal{D}_1^K)$.

(ii) $C_K \mathcal{H}\nabla_U KW + \omega_K \mathcal{T}_U KW = 0$
for $U\in \Gamma(\ker F_*)$ and $W\in \Gamma(\mathcal{D}_2^K)$.

(iii) the map $F$ is horizontally homothetic.

(iv) $\mathcal{T}_U B_K X + \mathcal{H}\nabla_U C_K X\in
\Gamma(K\mathcal{D}_2^K)$ and $\widehat{\nabla}_U B_K X +
\mathcal{T}_U C_K X\in \Gamma(\mathcal{D}_1^K)$ for $U\in
\Gamma(\ker F_*)$ and $X\in \Gamma((\ker F_*)^{\perp})$.
\end{theorem}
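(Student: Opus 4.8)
The plan is to characterize the totally geodesic condition $(\nabla F_*)(E_1,E_2)=0$ for all $E_1,E_2\in\Gamma(TM)$ by splitting each argument into its vertical and horizontal parts and using the symmetry of $\nabla F_*$ (Lemma \ref{lem1}), so that it suffices to treat the three cases $(\nabla F_*)(U,V)$ with $U,V\in\Gamma(\ker F_*)$; $(\nabla F_*)(U,X)$ with $U\in\Gamma(\ker F_*)$, $X\in\Gamma((\ker F_*)^\perp)$; and $(\nabla F_*)(X,Y)$ with $X,Y\in\Gamma((\ker F_*)^\perp)$. By the formulas \eqref{eqn77}, \eqref{eqn78}, \eqref{eqn79} of Lemma \ref{lem1}, the horizontal-horizontal case is controlled entirely by $\nabla\ln\lambda$ and vanishes precisely when $F$ is horizontally homothetic, which is the source of condition (iii); the vertical-vertical case becomes $F_*(\mathcal{T}_U V)=0$, i.e.\ $\mathcal{T}_U V\in\Gamma(\ker F_*)$; and the mixed case becomes $F_*(\mathcal{A}_X U)=0$, handled via the same homothety observation once the other parts are reduced.

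Next I would inject the hyperk\"ahler structure. Fix $R\in\{I,J,K\}$ and use $\nabla R=0$, so $\nabla_E(RE')=R(\nabla_E E')$ for all $E,E'$. The idea is to rewrite each O'Neill-tensor expression appearing above by inserting $R^2=-\mathrm{id}$ and pushing $R$ through $\nabla$. For instance, for $U,V\in\Gamma(\mathcal{D}_1^R)$ one writes $\mathcal{T}_U V=-\mathcal{T}_U R^2V=-R(\mathcal{T}_U RV)$ modulo the mixing terms and then decomposes $RV$, $\nabla_U RV$ via \eqref{eq8}, \eqref{eq9}; collecting the $\mu^R$- and $R\mathcal{D}_2^R$-components of $R(\mathcal{H}\nabla_U RV+\mathcal{T}_U RV)=R(\nabla_U RV)=-\nabla_U V$ and applying $F_*$ gives that $(\nabla F_*)(U,V)=0$ is equivalent to $C_R\mathcal{T}_U RV+\omega_R\widehat{\nabla}_U RV=0$ together with the surviving homothety term, which is exactly (b)(i). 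The same bookkeeping with $U\in\Gamma(\ker F_*)$, $W\in\Gamma(\mathcal{D}_2^R)$ produces (b)(ii), and with $U\in\Gamma(\ker F_*)$, $X\in\Gamma((\ker F_*)^\perp)$ (writing $RX=B_RX+C_RX$ and expanding $\nabla_U RX$) produces the two membership conditions in (b)(iv). Throughout, Lemma \ref{lemm1} is the right reference: its three pairs of identities are precisely the vertical/horizontal splittings of $R(\nabla_\bullet\bullet)=\nabla_\bullet(R\bullet)$ in the three relevant argument positions, so each clause (i)--(iv) is obtained by reading off when the appropriate $F_*$-image of such a splitting vanishes.

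To see the equivalence with (a) cleanly, I would argue that $F$ totally geodesic forces horizontal homotheticity first: applying $(\nabla F_*)(X,Y)=0$ with $X=Y$ basic horizontal and using \eqref{eqn77} gives $2X(\ln\lambda)F_*X=g_M(X,X)F_*(\nabla\ln\lambda)$, and then polarizing/choosing $X$ appropriately forces $\nabla\ln\lambda=0$, i.e.\ (iii); once (iii) holds, the homothety-correction terms in all the other reductions disappear and the remaining conditions collapse exactly to (i), (ii), (iv). Conversely, assuming (i)--(iv) one reconstructs $(\nabla F_*)=0$ on each of the three argument-type blocks: (iii) kills the horizontal-horizontal block and the mixed $F_*(\mathcal{A}_XU)$ term, (i) and (ii) together kill $F_*(\mathcal{T}_UV)$ on $\mathcal{D}_1^R\times\mathcal{D}_1^R$, $\mathcal{D}_1^R\times\mathcal{D}_2^R$ and $\mathcal{D}_2^R\times\mathcal{D}_2^R$ (the last using \eqref{eqn22}), and (iv) handles the mixed vertical-horizontal term; linearity in the vertical arguments then gives vanishing on all of $\ker F_*\times\ker F_*$ and $\ker F_*\times(\ker F_*)^\perp$. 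Finally, since $\{I,J,K\}$ is a global parallel quaternionic Hermitian basis, the whole argument runs identically for $R=I$, $R=J$, $R=K$, giving $(a)\Leftrightarrow(b)\Leftrightarrow(c)\Leftrightarrow(d)$.

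The main obstacle I anticipate is organizational rather than conceptual: keeping the $\mathcal{D}_1^R$/$\mathcal{D}_2^R$ decomposition of $\ker F_*$ and the $R\mathcal{D}_2^R$/$\mu^R$ decomposition of $(\ker F_*)^\perp$ straight while repeatedly commuting $R$ past $\nabla$ and projecting, and in particular making sure that the ``cross'' terms (e.g.\ the part of $R\mathcal{T}_UV$ lying in $\mathcal{D}_1^R$ versus $R\mathcal{D}_2^R$) are attributed to the correct clause among (i)--(iv); using Lemma \ref{lemm1} as a black box for these splittings, rather than re-deriving them, is what keeps this manageable.
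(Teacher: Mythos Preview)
Your plan is essentially the paper's own argument: split $(\nabla F_*)$ into the three blocks via Lemma~\ref{lem1}, insert $-R^2=\mathrm{id}$ with $\nabla R=0$, and read off (i)--(iv) from the $\phi_R/\omega_R$ and $B_R/C_R$ decompositions, exactly as in the paper's equations \eqref{eq30}, \eqref{eq31}, \eqref{eq32}, \eqref{eq38}. Two small corrections are worth making. First, your sentence ``(iii) kills the horizontal--horizontal block and the mixed $F_*(\mathcal{A}_X U)$ term'' is wrong: horizontal homotheticity says nothing about $\mathcal{A}_X U$ (which can be nonzero even when $\lambda\equiv 1$); the mixed block is controlled solely by (iv), as you yourself say one clause later, so just drop that half-sentence. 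Second, ``forces $\nabla\ln\lambda=0$'' should read $\mathcal{H}\nabla\ln\lambda=0$; that is what horizontally homothetic means and is all that \eqref{eqn77} can give you. Your route to (iii) via $X=Y$ and then pairing with $F_*X$ is actually a bit slicker than the paper's, which instead substitutes $Y=RX$ for $X\in\Gamma(\mu^R)$ and $X=Y=RU$ for $U\in\Gamma(\mathcal{D}_2^R)$ to isolate the $\mu^R$- and $R\mathcal{D}_2^R$-components of $\mathcal{H}\nabla\ln\lambda$ separately; both work.
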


\begin{proof}
Given $U,V\in \Gamma(\mathcal{D}_1^R)$ and $R\in \{ I,J,K \}$, we
have
\begin{align*}
  (\nabla F_*)(U,V)
  &= F_* (R(\mathcal{T}_U RV + \widehat{\nabla}_U RV))  \\
  &= F_* (B_R \mathcal{T}_U RV + C_R \mathcal{T}_U RV + \phi_R \widehat{\nabla}_U RV + \omega_R \widehat{\nabla}_U RV)  \\
  &= F_* (C_R \mathcal{T}_U RV + \omega_R \widehat{\nabla}_U RV)
\end{align*}
so that
\begin{equation}\label{eq30}
(\nabla F_*)(U,V) = 0 \Leftrightarrow C_R \mathcal{T}_U RV +
\omega_R \widehat{\nabla}_U RV = 0.
\end{equation}
Given $U\in \Gamma(\ker F_*)$ and $W\in \Gamma(\mathcal{D}_2^R)$, we
get
\begin{align*}
  (\nabla F_*)(U,W)
  &= F_* (R(\nabla_U RW))   \\
  &= F_* (R(\mathcal{T}_U RW + \mathcal{H}\nabla_U RW))  \\
  &= F_* (C_R \mathcal{H}\nabla_U RW + \omega_R \mathcal{T}_U RW)
\end{align*}
so that
\begin{equation}\label{eq31}
(\nabla F_*)(U,W) = 0 \Leftrightarrow C_R \mathcal{H}\nabla_U RW +
\omega_R \mathcal{T}_U RW = 0.
\end{equation}
We claim that
\begin{eqnarray}
  &&(\nabla F_*)(X,Y) = 0 \quad \text{for} \ X,Y\in \Gamma((\ker F_*)^{\perp})   \label{eq32}   \\
  &&\Leftrightarrow F \ \text{is horizontally homothetic.}   \nonumber
\end{eqnarray}
Given $X,Y\in \Gamma((\ker F_*)^{\perp})$, by (\ref{eqn77}),  we
obtain
\begin{equation}\label{eq33}
(\nabla F_*)(X,Y) = X(\ln \lambda)F_* Y + Y(\ln \lambda)F_* X - g_M
(X,Y)F_* \nabla(\ln \lambda)
\end{equation}
so that the part from right to left  immediately follows.

Conversely, we have
\begin{equation}\label{eq34}
0 = X(\ln \lambda)F_* Y + Y(\ln \lambda)F_* X - g_M (X,Y)F_*
\nabla(\ln \lambda).
\end{equation}
Applying $Y = RX$, $X\in \Gamma(\mu^R)$, at (\ref{eq34}), we get
\begin{align*}
  0
  &= X(\ln \lambda)F_* RX + RX(\ln \lambda)F_* X - g_M (X,RX)F_*
\nabla(\ln \lambda)   \\
  &= X(\ln \lambda)F_* RX + RX(\ln \lambda)F_* X
\end{align*}
so that since $\{ F_* RX, F_* X \}$ is linearly independent for
nonzero $X$, we obtain $X(\ln \lambda) = 0$ and $RX(\ln \lambda) =
0$, which implies
\begin{equation}\label{eq35}
X(\lambda) = 0 \quad \text{for} \ X\in \Gamma(\mu^R).
\end{equation}
Applying $X = Y = RU$, $U\in \Gamma(\mathcal{D}_2^R)$, at
(\ref{eq34}), we obtain
\begin{equation}\label{eq36}
0 = 2RU(\ln \lambda)F_* RU - g_M (RU,RU)F_* \nabla(\ln \lambda).
\end{equation}
Taking inner product with $F_* RU$ at (\ref{eq36}), we have
\begin{align*}
  0
  &= 2g_M (RU, \nabla(\ln \lambda)) g_N (F_* RU, F_* RU) - g_M (RU, RU) g_N (F_* \nabla(\ln \lambda), F_* RU)   \\
  &= \lambda g_M (RU, RU) g_M (RU, \nabla(\ln \lambda)),
\end{align*}
which implies
\begin{equation}\label{eq37}
RU(\lambda) = 0 \quad \text{for} \ U\in \Gamma(\mathcal{D}_2^R).
\end{equation}
By (\ref{eq35}) and (\ref{eq37}), we get the part from left to
right.

Given $U\in \Gamma(\ker F_*)$ and $X\in \Gamma((\ker F_*)^{\perp})$,
we obtain
\begin{align*}
  (\nabla F_*)(U,X)
  &= F_* (R(\nabla_U RX))   \\
  &= F_* (R(\mathcal{T}_U B_R X + \widehat{\nabla}_U B_R X) + R(\mathcal{T}_U C_R X + \mathcal{H}\nabla_U C_R X))  \\
  &= F_* (C_R (\mathcal{T}_U B_R X + \mathcal{H}\nabla_U C_R X) + \omega_R (\widehat{\nabla}_U B_R X + \mathcal{T}_U C_R X))
\end{align*}
so that
\begin{eqnarray}
(\nabla F_*)(U,X) = 0 \Leftrightarrow
  &&\mathcal{T}_U B_R X + \mathcal{H}\nabla_U C_R X \in \Gamma(R\mathcal{D}_2^R),    \label{eq38}   \\
  &&\widehat{\nabla}_U B_R X + \mathcal{T}_U C_R X \in \Gamma(R\mathcal{D}_1^R)  \nonumber
\end{eqnarray}
By (\ref{eq30}), (\ref{eq31}), (\ref{eq32}), (\ref{eq38}), we have
$(a) \Leftrightarrow (b)$, $(a) \Leftrightarrow (c)$, $(a)
\Leftrightarrow (d)$.

Therefore, we get the result.
\end{proof}

Let $F : (M,g_M)\mapsto (N,g_N)$ be a horizontally conformal
submersion. The map $F$ is called a horizontally conformal
submersion {\em with totally umbilical fibers} if
\begin{equation}\label{eq39}
\mathcal{T}_X Y=g_M (X, Y)H \quad  \text{for} \  X,Y\in \Gamma(\ker
F_*),
\end{equation}
where $H$ is the mean curvature vector field of the distribution
$\ker F_*$.

\begin{lemma}\label{lem4}
Let $F$ be an almost h-conformal semi-invariant submersion with
totally umbilical fibers from a hyperk\"{a}hler manifold
$(M,I,J,K,g_M)$ onto a Riemannian manifold $(N, g_N)$ such that
$(I,J,K)$ is an almost h-conformal semi-invariant basis. Then
\begin{equation}\label{eq40}
H\in \Gamma(R\mathcal{D}_2^R) \quad  \text{for} \ R\in \{ I,J,K \}.
\end{equation}
\end{lemma}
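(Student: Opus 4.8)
The plan is to exploit the defining relation $\mathcal{T}_X Y = g_M(X,Y)H$ for totally umbilical fibers together with the fact that each $R\in\{I,J,K\}$ is parallel on $M$ (hyperk\"ahler), applied to suitably chosen vertical vector fields. Fix $R\in\{I,J,K\}$ and take $V\in\Gamma(\mathcal{D}_1^R)$, so $RV\in\Gamma(\mathcal{D}_1^R)\subset\Gamma(\ker F_*)$ as well. The idea is to compute $\mathcal{H}\nabla_V RV$ in two ways: on one hand, since $\nabla R=0$, we have $\nabla_V RV = R\nabla_V V$, and decomposing $\nabla_V V = \mathcal{T}_V V + \widehat{\nabla}_V V = g_M(V,V)H + \widehat{\nabla}_V V$ gives $\mathcal{H}\nabla_V RV = \mathcal{H}(R(g_M(V,V)H + \widehat{\nabla}_V V)) = g_M(V,V)\,\mathcal{H}(RH) + \mathcal{H}(R\widehat{\nabla}_V V)$; the second term vanishes because $\widehat{\nabla}_V V\in\Gamma(\mathcal{D}_1^R)$ and $R(\mathcal{D}_1^R)=\mathcal{D}_1^R\subset\ker F_*$. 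On the other hand $\mathcal{H}\nabla_V RV = \mathcal{T}_V RV = g_M(V,RV)H = 0$. Hence $g_M(V,V)\,\mathcal{H}(RH)=0$, so $\mathcal{H}(RH)=0$, i.e.\ $RH\in\Gamma(\ker F_*)$.

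Next I would show $RH\in\Gamma(\mathcal{D}_2^R)$, or equivalently that $H$ has no component in $\mu^R$ and no component in $R\mathcal{D}_1^R$-type directions — actually the cleanest route: writing $H = \mathcal{H}H$ (it is horizontal) and decomposing via \eqref{eqn91} as $H = \omega$-part $+ \mu^R$-part, we have $H\in R\mathcal{D}_2^R\oplus\mu^R$. Apply $R$: $RH = R(\text{component in }R\mathcal{D}_2^R) + R(\text{component in }\mu^R)$. Since $R(R\mathcal{D}_2^R) = R^2\mathcal{D}_2^R = \mathcal{D}_2^R\subset\ker F_*$ and $\mu^R$ is $R$-invariant hence $R(\mu^R)=\mu^R\subset(\ker F_*)^\perp$, the condition $RH\in\Gamma(\ker F_*)$ forces the $\mu^R$-component of $H$ to vanish. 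Therefore $H\in\Gamma(R\mathcal{D}_2^R)$, which is exactly \eqref{eq40}.

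Since $R\in\{I,J,K\}$ was arbitrary, \eqref{eq40} holds for all three, completing the proof. The only mildly delicate point — the ``main obstacle'' — is bookkeeping the orthogonal decompositions correctly: one must be careful that $\widehat{\nabla}_V V$ indeed lies in $\mathcal{D}_1^R$ when $V\in\Gamma(\mathcal{D}_1^R)$, which is \emph{not} automatic since $\mathcal{D}_1^R$ need not be integrable or $\widehat\nabla$-parallel inside the fiber. To avoid this subtlety I would instead argue directly: for arbitrary $V\in\Gamma(\ker F_*)$ we only used $\mathcal{T}_V V = g_M(V,V)H$ and $\nabla R = 0$ to get $R\mathcal{T}_V V = \mathcal{H}\nabla_V RV + \mathcal{V}(R\nabla_V V)$; taking $V\in\Gamma(\mathcal D_1^R)$ makes $RV$ vertical so $\mathcal{H}\nabla_V RV=\mathcal{T}_V RV = g_M(V,RV)H=0$, and reading off the horizontal component yields $g_M(V,V)\,\mathcal H(RH)=0$ without ever needing to know where $\widehat\nabla_V V$ or $\mathcal V(R\nabla_V V)$ land. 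Polarization over $V\in\Gamma(\mathcal D_1^R)$ (which is nonzero as $\mathcal D_1^R$ contains $R$-invariant subspaces) then gives $\mathcal H(RH)=0$, and the decomposition argument above finishes it.
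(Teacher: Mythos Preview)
Your overall strategy is sound and close in spirit to the paper's: both arguments pick $V\in\Gamma(\mathcal D_1^R)$, use $\nabla_V(RV)=R\nabla_VV$, and feed in the umbilical condition $\mathcal T_VV=g_M(V,V)H$. However, your proposed fix for the $\widehat\nabla_VV$ issue does not work. The displayed identity you invoke, $R\mathcal T_VV=\mathcal H\nabla_VRV+\mathcal V(R\nabla_VV)$, is false: since $R\nabla_VV=\nabla_VRV$, the right-hand side is just $\nabla_VRV=R\nabla_VV$, which would force $\widehat\nabla_VV=0$. The correct horizontal projection is
\[
0=\mathcal T_V(RV)=\mathcal H\nabla_V(RV)=\mathcal H\bigl(R\mathcal T_VV\bigr)+\mathcal H\bigl(R\widehat\nabla_VV\bigr)=g_M(V,V)\,C_RH+\omega_R\widehat\nabla_VV,
\]
so the term $\omega_R\widehat\nabla_VV$ is genuinely present and cannot be discarded by ``reading off the horizontal component''. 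The missing ingredient is the orthogonal splitting \eqref{eqn91}: one has $C_RH\in\Gamma(\mu^R)$ while $\omega_R\widehat\nabla_VV\in\Gamma(R\mathcal D_2^R)$, and these are orthogonal in $(\ker F_*)^\perp$. Hence each summand vanishes separately, giving $C_RH=0$ (provided $\mathcal D_1^R\neq\{0\}$, an assumption you and the paper both use tacitly). Your second paragraph then finishes the argument correctly.

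The paper's proof is essentially the same computation, but it builds in this orthogonality from the start by pairing against a test vector $W\in\Gamma(\mu^R)$, which automatically kills the $\omega_R\widehat\nabla_XY$ contribution; it then uses a symmetrization in two vectors $X,Y\in\Gamma(\mathcal D_1^R)$ where you instead exploit the single-vector identity $g_M(V,RV)=0$.
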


\begin{proof}
Given $X, Y\in \Gamma (\mathcal{D}_1^R)$, $W\in \Gamma (\mu^R)$, and
$R\in \{ I,J,K \}$, we have
\begin{align*}
&\mathcal{T}_X RY+\widehat{\nabla}_X RY= \nabla_X RY= R\nabla_X Y \\
&=B_R \mathcal{T}_X Y+C_R \mathcal{T}_X Y+\phi_R
\widehat{\nabla}_X Y+\omega_R \widehat{\nabla}_X Y  \\
\end{align*}
so that
$$
g_M (\mathcal{T}_X RY, W)= g_M (C_R \mathcal{T}_X Y, W) = -g_M
(\mathcal{T}_X Y, RW).
$$
Using (\ref{eq39}),  we obtain
$$
g_M (X, RY) g_M (H, W)= -g_M (X, Y) g_M (H, RW).
$$
Interchanging the role of $X$ and $Y$, we get
$$
g_M (Y, RX) g_M (H, W)= -g_M (Y, X) g_M (H, RW).
$$
Combining the above two equations, we have
$$
g_M (X, Y) g_M (H, RW)=0,
$$
which implies $H\in \Gamma(R\mathcal{D}_2^R)$ (since $R\mu^R =
\mu^R$).
\end{proof}

\begin{theorem}
Let $F$ be an h-conformal semi-invariant submersion with totally
umbilical fibers from a hyperk\"{a}hler manifold $(M,I,J,K,g_M)$
onto a Riemannian manifold $(N, g_N)$ such that $(I,J,K)$ is an
 h-conformal semi-invariant basis. Then all the fibers of $F$
are totally geodesic.
\end{theorem}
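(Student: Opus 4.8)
The plan is to show that the mean curvature vector field $H$ of the fibers vanishes identically. Since the fibers are totally umbilical, (\ref{eq39}) gives $\mathcal{T}_X Y = g_M(X,Y)H$ for all $X,Y\in\Gamma(\ker F_*)$, so once $H=0$ is established we obtain $\mathcal{T}_XY=0$ for all vertical $X,Y$, which is exactly the assertion that every fiber is a totally geodesic submanifold of $M$.

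First I would apply Lemma \ref{lem4}: since every h-conformal semi-invariant submersion is in particular an almost h-conformal semi-invariant submersion, the lemma yields $H\in\Gamma(R\mathcal{D}_2^R)$ for each $R\in\{I,J,K\}$. The decisive point is that in the h-conformal (as opposed to \emph{almost} h-conformal) case the distribution $\mathcal{D}_2$ coming from the h-conformal semi-invariant basis $\{I,J,K\}$ is one and the same for all three structures, so in fact
$$
H\in\Gamma(I\mathcal{D}_2)\cap\Gamma(J\mathcal{D}_2)\cap\Gamma(K\mathcal{D}_2).
$$

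Next I would exploit the quaternionic relation $K=IJ$. From $H\in\Gamma(J\mathcal{D}_2)$ and $J^2=-\mathrm{id}$ we get $JH\in\Gamma(\mathcal{D}_2)\subset\Gamma(\ker F_*)$; hence $KH=I(JH)\in\Gamma(I\mathcal{D}_2)\subset\Gamma((\ker F_*)^{\perp})$, because $I(\mathcal{D}_2)\subset(\ker F_*)^{\perp}$. On the other hand $H\in\Gamma(K\mathcal{D}_2)$ together with $K^2=-\mathrm{id}$ gives $KH\in\Gamma(\mathcal{D}_2)\subset\Gamma(\ker F_*)$. Therefore $KH$ is a section of $\ker F_*\cap(\ker F_*)^{\perp}=\{0\}$, and since $K$ is invertible we conclude $H=0$. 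Substituting back into (\ref{eq39}) gives $\mathcal{T}\equiv 0$ on $\ker F_*$, which is the claim.

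The argument is short, so there is no serious obstacle; the only thing demanding a little care is keeping track of which of $\ker F_*$ and $(\ker F_*)^{\perp}$ contains $KH$. Conceptually, the one subtlety worth recording is that the proof genuinely uses that $\mathcal{D}_2$ is common to $I$, $J$, $K$ — this is precisely why the hypothesis is ``h-conformal semi-invariant'' rather than merely ``almost h-conformal semi-invariant,'' since for the latter the distributions $\mathcal{D}_2^I,\mathcal{D}_2^J,\mathcal{D}_2^K$ may differ and the cancellation $KH=0$ need not occur.
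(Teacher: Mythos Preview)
Your proposal is correct and follows essentially the same route as the paper: invoke Lemma~\ref{lem4} to place $H$ in $R\mathcal{D}_2$ for every $R\in\{I,J,K\}$, then use $KH=I(JH)$ together with $JH\in\mathcal{D}_2$ and $I\mathcal{D}_2\subset(\ker F_*)^{\perp}$ to force $KH$ to be both vertical and horizontal, whence $H=0$. Your write-up is in fact slightly more explicit than the paper's at the final step, and your closing remark on why the common $\mathcal{D}_2$ (rather than separate $\mathcal{D}_2^R$) is essential is a worthwhile addition.
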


\begin{proof}
By Lemma \ref{lem4}, we have
$$
H\in \Gamma(R\mathcal{D}_2) \quad \text{for} \ R\in \{ I,J,K \}
$$
so that
$$
\{ IH, JH, KH \} \ \subset \ \Gamma(\mathcal{D}_2).
$$
But
$$
KH = IJH = I(JH)\in \Gamma(\mathcal{D}_2) \quad \text{with} \ JH\in
\Gamma(\mathcal{D}_2).
$$
Since $I\mathcal{D}_2 \subset (\ker F_*)^{\perp}$, we must have $H =
0$. By (\ref{eq39}), we obtain the result.
\end{proof}

\section{Examples}

Note that given an Euclidean space $\mathbb{R}^{4m}$ with
coordinates $(x_1,x_2,\cdots,x_{4m})$, we can canonically choose
complex structures $I, J, K$ on $\mathbb{R}^{4m}$ as follows:
\begin{align*}
  &I(\tfrac{\partial}{\partial x_{4k+1}})=\tfrac{\partial}{\partial x_{4k+2}},
  I(\tfrac{\partial}{\partial x_{4k+2}})=-\tfrac{\partial}{\partial x_{4k+1}},
  I(\tfrac{\partial}{\partial x_{4k+3}})=\tfrac{\partial}{\partial x_{4k+4}},
  I(\tfrac{\partial}{\partial x_{4k+4}})=-\tfrac{\partial}{\partial x_{4k+3}},     \\
  &J(\tfrac{\partial}{\partial x_{4k+1}})=\tfrac{\partial}{\partial x_{4k+3}},
  J(\tfrac{\partial}{\partial x_{4k+2}})=-\tfrac{\partial}{\partial x_{4k+4}},
  J(\tfrac{\partial}{\partial x_{4k+3}})=-\tfrac{\partial}{\partial x_{4k+1}},
  J(\tfrac{\partial}{\partial x_{4k+4}})=\tfrac{\partial}{\partial x_{4k+2}},    \\
  &K(\tfrac{\partial}{\partial x_{4k+1}})=\tfrac{\partial}{\partial x_{4k+4}},
  K(\tfrac{\partial}{\partial x_{4k+2}})=\tfrac{\partial}{\partial x_{4k+3}},
  K(\tfrac{\partial}{\partial x_{4k+3}})=-\tfrac{\partial}{\partial x_{4k+2}},
  K(\tfrac{\partial}{\partial x_{4k+4}})=-\tfrac{\partial}{\partial x_{4k+1}}
\end{align*}
for $k\in \{ 0,1,\cdots,m-1 \}$.

Then we easily check that $(I,J,K,\langle \ ,\ \rangle)$ is a
hyperk\"{a}hler structure on $\mathbb{R}^{4m}$, where $\langle \ ,\
\rangle$ denotes the Euclidean metric on $\mathbb{R}^{4m}$.
Throughout this section, we will use these notations.

\begin{example}
Let $(M, E, g)$ be an almost quaternionic Hermitian manifold. Let
$\pi : TM \mapsto M$ be the natural projection \cite{IMV}. Then the
map $\pi$ is an h-conformal semi-invariant submersion such that
$\mathcal{D}_1=\ker \pi_*$ and dilation $\lambda = 1$.
\end{example}

\begin{example}
Let $(M,E_M,g_M)$ and $(N,E_N,g_N)$ be almost quaternionic Hermitian
manifolds. Let $F : M \mapsto N$ be a quaternionic submersion
\cite{IMV}. Then the map $F$ is an h-conformal semi-invariant
submersion such that
 $\mathcal{D}_1= \ker F_*$ and dilation $\lambda = 1$.
\end{example}

\begin{example}
Let $(M,E,g_M)$ be an almost quaternionic Hermitian manifold and
$(N,g_N)$ a Riemannian manifold. Let $F : (M,E,g_M) \mapsto (N,g_N)$
be an h-semi-invariant submersion \cite{P2}. Then the map $F$ is an
h-conformal semi-invariant submersion with dilation $\lambda = 1$.
\end{example}

\begin{example}
Let $(M,E,g_M)$ be an almost quaternionic Hermitian manifold and
$(N,g_N)$ a Riemannian manifold. Let $F : (M,E,g_M) \mapsto (N,g_N)$
be an almost h-semi-invariant submersion \cite{P2}. Then the map $F$
is an almost h-conformal semi-invariant submersion with dilation
$\lambda = 1$.
\end{example}

\begin{example}
Let $(M,E,g_M)$ be a $4n$-dimensional almost quaternionic Hermitian
manifold and $(N,g_N)$ a $(4n-1)$-dimensional Riemannian manifold.
Let $F : (M,E,g_M) \mapsto (N,g_N)$ be a horizontally conformal
submersion with dilation $\lambda$. Then the map $F$ is an
h-conformal semi-invariant submersion such that $\mathcal{D}_2 =
\ker F_*$ and dilation $\lambda$.
\end{example}

\begin{example}
Let $F : \mathbb{R}^4 \mapsto \mathbb{R}^3$ be a horizontally
conformal submersion with dilation $\lambda$. Then the map $F$ is an
h-conformal semi-invariant submersion such that $\mathcal{D}_2= \ker
F_*$ and dilation $\lambda$.
\end{example}

\begin{example}\label{exam1}
Define a map $F : \mathbb{R}^4 \mapsto \mathbb{R}^2$ by
$$
F(x_1,\cdots,x_4) = e^{1934}(x_1, x_2).
$$
Then the map $F$ is an almost h-conformal anti-holomorphic
semi-invariant submersion such that $I(\ker F_*)=\ker F_*$, $J(\ker
F_*)=(\ker F_*)^\perp$,
 $K(\ker F_*)=(\ker F_*)^\perp$, and dilation $\lambda = e^{1934}$.

Here, $(K,I,J)$ is an almost h-conformal anti-holomorphic
semi-invariant basis.
\end{example}

\begin{example}
Define a map $F : \mathbb{R}^8 \mapsto \mathbb{R}^6$ by
$$
F(x_1,\cdots,x_8) = \pi^{1934}(x_3,\cdots,x_8).
$$
Then the map $F$ is an almost h-conformal semi-invariant submersion
such that $I(\ker F_*)=\ker F_*$, $J(\ker F_*)\subset(\ker
F_*)^\perp$,  $K(\ker F_*)\subset(\ker F_*)^\perp$, and dilation
$\lambda = \pi^{1934}$.
\end{example}

\begin{example}
Define a map $F : \mathbb{R}^8 \mapsto \mathbb{R}^4$ by
$$
F(x_1,\cdots,x_8) = e^{1968}(x_1,x_2,x_5,x_7).
$$
Then the map $F$ is an almost h-conformal semi-invariant submersion
such that $\mathcal{D}_1^I =\mathcal{D}_2^J =
<\frac{\partial}{\partial x_3}, \frac{\partial}{\partial x_4}>$,
$\mathcal{D}_2^I =\mathcal{D}_1^J = <\frac{\partial}{\partial x_6},
\frac{\partial}{\partial x_8}>$,  $K(\ker F_*)=(\ker F_*)^\perp$,
and dilation $\lambda = e^{1968}$.
\end{example}

\begin{example}
Define a map $F : \mathbb{R}^8 \mapsto \mathbb{R}^3$ by
$$
F(x_1,\cdots,x_8) = \pi^{1968}(x_6,x_7,x_8).
$$
Then the map $F$ is a h-conformal semi-invariant submersion  such
that $\mathcal{D}_1 = <\frac{\partial}{\partial x_1}, \cdots,
\frac{\partial}{\partial x_4}>$,  $\mathcal{D}_2 =
<\frac{\partial}{\partial x_5}>$, and dilation $\lambda =
\pi^{1968}$.
\end{example}

%\section*{Acknowledgments}

%The authors are grateful to the referees for their valuable comments and suggestions.


\begin{thebibliography}{30}
\addcontentsline{toc}{section}{References}



\bibitem{AM}
D. V. Alekseevsky, S. Marchiafava,
\newblock {\em Almost complex submanifolds of quaternionic manifolds},
\newblock In: Proceedings of the colloquium on differential geometry, Debrecen (Hungary), 25-30 July 2000, Inst. Math. Inform. Debrecen, 2001, 23-38.





\bibitem{AS}
M. A. Akyol, B. Sahin,
\newblock {\em Conformal anti-invariant submersions from almost Hermitian manifolds},
\newblock Turkish Journal of Mathematics, (2016), \textbf{40}(1), 43-70.



\bibitem{AS2}
M. A. Akyol, B. Sahin,
\newblock {\em Conformal semi-invariant submersions},
\newblock Communications in Contemporary Mathematics, (2017), \textbf{19}(2), 1650011.



\bibitem{B}
A. L. Besse,
\newblock {\em Einstein manifolds},
\newblock Springer Verlag, Berlin, 1987.



\bibitem{BCU}
M. Barros, B. Y. Chen, F. Urbano,
\newblock {\em Quaternion CR-submanifolds of quaternion manifolds},
\newblock Kodai Mathematical Journal. (1980), \textbf{4}, 399-417.



\bibitem{BL}
J. P. Bourguignon, H. B. Lawson,
\newblock {\em A mathematician's visit to Kaluza-Klein theory},
\newblock Rend. Semin. Mat. Torino Fasc. Spec. (1989), 143-163.




\bibitem{BL2}
J. P. Bourguignon, H. B. Lawson,
\newblock {\em Stability and isolation phenomena for Yang-mills fields},
\newblock Commum. Math. Phys. (1981), \textbf{79}, 189-230.



\bibitem{BW}
P. Baird, J. C. Wood,
\newblock {\em Harmonic morphisms between Riemannian manifolds},
\newblock Oxford science publications, 2003.


\bibitem{C}
B. Y. Chen,
\newblock {\em Geometry of slant submaniflods},
\newblock Katholieke Universiteit Leuven, Leuven, 1990.



\bibitem{CMMS}
V. Cort\'{e}s, C. Mayer, T. Mohaupt, F. Saueressig,
\newblock {\em Special geometry of Euclidean supersymmetry 1. Vector multiplets},
\newblock J. High Energy Phys. (2004), \textbf{03}, 028.


\bibitem{F}
B. Fuglede,
\newblock {\em Harmonic morphisms between Riemannian manifolds},
\newblock Ann. Inst. Fourier (Grenoble), (1978), \textbf{28}(2), 107-144.



\bibitem{FIP}
M. Falcitelli, S. Ianus, and A. M. Pastore,
\newblock {\em Riemannian submersions and related topics},
\newblock World Scientific Publishing Co., 2004.


\bibitem{G}
A. Gray,
\newblock {\em Pseudo-Riemannian almost product manifolds and submersions},
\newblock J. Math. Mech, (1967), \textbf{16}, 715-737.



\bibitem{G2}
S. Gudmundsson,
\newblock {\em The geometry of harmonic morphisms},
\newblock Ph.D. thesis, University of Leeds, 1992.



\bibitem{I}
T. Ishihara,
\newblock {\em A mapping of Riemannian manifolds which preserves harmonic functions},
\newblock Journal of Mathematics of Kyoto University, (1979), \textbf{19}(2), 215-229.



\bibitem{IMV}
S. Ianus, R. Mazzocco, G. E. Vilcu,
\newblock {\em Riemannian submersions from quaternionic manifolds },
\newblock Acta. Appl. Math.(2008) \textbf{104}, 83-89.



\bibitem{IV}
S. Ianus, M. Visinescu,
\newblock {\em Kaluza-Klein theory with scalar fields and generalized Hopf manifolds},
\newblock Class. Quantum Gravity (1987), \textbf{4}, 1317-1325.



\bibitem{IV2}
S. Ianus, M. Visinescu,
\newblock {\em Space-time compactification and Riemannian submersions},
\newblock In: Rassias, G.(ed.) The Mathematical Heritage of C. F. Gauss,
 (1991), 358-371, World Scientific, River Edge.




\bibitem{M}
M. T. Mustafa,
\newblock {\em Applications of harmonic morphisms to gravity},
\newblock J. Math. Phys. (2000), \textbf{41}(10), 6918-6929.


\bibitem{O}
B. O'Neill,
\newblock {\em The fundamental equations of a submersion},
\newblock Mich. Math. J., (1966), \textbf{13}, 458-469.




\bibitem{P2}
K. S. Park,
\newblock {\em H-semi-invariant submersions},
\newblock Taiwan. J. Math. (2012), \textbf{16}(5), 1865-1878.




\bibitem{P3}
K. S. Park,
\newblock {\em H-anti-invariant submersions from almost quaternionic Hermitian manifolds},
\newblock Czechoslovak Mathematical Journal (2017), \textbf{67}(142), 557-578.





\bibitem{PP}
K. S. Park, R. Prasad,
\newblock {\em Semi-slant submersions},
\newblock Bull. Korean Math. Soc., (2013), \textbf{50}(3), 951-962.




\bibitem{S0}
B. Sahin,
\newblock {\em Anti-invariant Riemannian submersions from almost Hermitian manifolds},
\newblock  Cent. Eur. J. Math., (2010), \textbf{8}(3), 437-447.




\bibitem{S}
B. Sahin,
\newblock {\em slant submersions from almost Hermitian manifolds},
\newblock Bull. Math. Soc. Sci. Math. Roumanie Tome (2011), \textbf{54}(102)(1),
93 - 105.




\bibitem{S2}
B. Sahin,
\newblock {\em Semi-invariant submersions from almost Hermitian manifolds},
\newblock Canad. Math. Bull., (2013), \textbf{56}(1), 173-183.




\bibitem{S3}
B. Sahin,
\newblock {\em Riemannian submersions from almost Hermitian manifolds},
\newblock Taiwanese J. Math., (2013), \textbf{17}(2), 629-659.



\bibitem{U}
H. Urakawa,
\newblock {\em Calculus of variations and harmonic maps},
\newblock American Mathematical Soc., 2013.




\bibitem{W}
B. Watson,
\newblock {\em Almost Hermitian submersions},
\newblock J. Differential Geom., (1976), \textbf{11}(1), 147-165.



\bibitem{W2}
B. Watson,
\newblock {\em $G,G'$-Riemannian submersions and nonlinear gauge field equations of general relativity},
\newblock In: Rassias, T. (ed.) Global Analysis - Analysis on manifolds,
dedicated M. Morse. Teubner-Texte Math., (1983), \textbf{57},
324-349, Teubner, Leipzig.





\end{thebibliography}
\end{document}